\documentclass[11pt,reqno]{article}
\usepackage{amsmath, amsthm, amssymb, amsfonts}
\usepackage{bbm}
\usepackage{bm}
\usepackage{graphicx}
\usepackage{xcolor}
\usepackage{hyperref}
\usepackage{enumitem}
\usepackage{cleveref}

\usepackage{multicol}
\usepackage[
  top=3cm,
  bottom=3cm,
  left=2cm,
  right=2cm
]{geometry}
\usepackage{lipsum}
\usepackage{mwe}

\usepackage{graphicx}
\usepackage{caption}
\usepackage{subcaption}
\usepackage{float}

\usepackage[linesnumbered,ruled,vlined]{algorithm2e}

\theoremstyle{plain}

\usepackage[
  backend=biber,
  style=numeric,
  giveninits=true,
  maxbibnames=99
]{biblatex}

\numberwithin{equation}{section}

\newtheorem{theorem}{Theorem}[section]

\theoremstyle{definition}

\newtheorem*{remark}{Remark}

\newcommand{\K}{\mathcal{K}}

\newcommand{\R}{\mathbb{R}}

\newcommand{\Z}{\mathbb{Z}}

\newcommand{\N}{\mathbb{N}}

\let\strokeL\L
\DeclareRobustCommand{\L}{\ifmmode\mathbf{L}\else\strokeL\fi}

\makeatletter
\DeclareFontFamily{OMX}{MnSymbolE}{}
\DeclareSymbolFont{MnLargeSymbols}{OMX}{MnSymbolE}{m}{n}
\SetSymbolFont{MnLargeSymbols}{bold}{OMX}{MnSymbolE}{b}{n}
\DeclareFontShape{OMX}{MnSymbolE}{m}{n}{
    <-6>  MnSymbolE5
   <6-7>  MnSymbolE6
   <7-8>  MnSymbolE7
   <8-9>  MnSymbolE8
   <9-10> MnSymbolE9
  <10-12> MnSymbolE10
  <12->   MnSymbolE12
}{}
\DeclareFontShape{OMX}{MnSymbolE}{b}{n}{
    <-6>  MnSymbolE-Bold5
   <6-7>  MnSymbolE-Bold6
   <7-8>  MnSymbolE-Bold7
   <8-9>  MnSymbolE-Bold8
   <9-10> MnSymbolE-Bold9
  <10-12> MnSymbolE-Bold10
  <12->   MnSymbolE-Bold12
}{}

\let\llangle\@undefined
\let\rrangle\@undefined
\DeclareMathDelimiter{\llangle}{\mathopen}%
                     {MnLargeSymbols}{'164}{MnLargeSymbols}{'164}
\DeclareMathDelimiter{\rrangle}{\mathclose}%
                     {MnLargeSymbols}{'171}{MnLargeSymbols}{'171}
\makeatother

\DeclareFontShape{OT1}{cmr}{bx}{sc}{<-> cmbcsc10}{}

\title{A Spectral Model-Informed Neural Network for Inverse Source Problems}
 \author{Dinh-Liem Nguyen\thanks{Department of Mathematics, Kansas State University, Manhattan, KS 66506 (\href{mailto:dlnguyen@ksu.edu}{dlnguyen@ksu.edu})} \and  Nhung H. Nguyen\thanks{Department of Mathematics, Kansas State University, Manhattan, KS 66506 (\href{mailto:nhungnh@ksu.edu}{nhungnh@ksu.edu})} \and  Aravinth Ravi\thanks{Department of Mathematics, Kansas State University, Manhattan, KS 66506  (\href{mailto:arav0006@ksu.edu}{arav0006@ksu.edu})}} 
\begin{document}
\date{}
\maketitle

\begin{abstract}

In this paper, we propose an unsupervised, two-step model-informed deep learning framework for solving the inverse source problem. In the first step, boundary measurement data are transformed into an imaging function that encodes information about the shape and location of the unknown source. Leveraging this information, we derive a Fourier-based model equation in the spectral domain that relates the imaging function to the Fourier coefficients of the source function. In the second step, this model equation is incorporated into the training of a model-informed neural network to recover the parameters of interest. The proposed approach enables fast and accurate reconstruction of the source function while maintaining robustness to noise. The effectiveness of the method is demonstrated through numerical experiments in both two- and three-dimensional settings. For the two-dimensional case, we further compare our approach with the traditional least-squares method to validate its computational efficiency and reconstruction accuracy.
\end{abstract}

\textbf{Keywords:} inverse source problem, deep learning, model-informed network,  imaging function, 3D reconstructions

\section{Introduction}
Let $\Omega$ be a bounded Lipschitz domain in $\R^n$, where  $n=2, 3$, and $f \in L^2(\R^n)$ be a real-valued function with compact support in $\Omega$. For a source characterized by $f$,  we consider the following model problem
\begin{align}\label{eq:PDE}
    & \Delta u + k^{2}u  = f,\hspace{2.05 cm} \text{in } \Omega \subset \mathbb{R}^{n}, \\ \label{eq:radcon}
    &  \frac{\partial u}{\partial r} -iku  = \mathcal{O}\left(r^{-\frac{n-1}{2}}\right), \quad r=|x|\to \infty,
\end{align}
where $u$ is the radiated wave with wave number $k>0$, and~\eqref{eq:radcon} is the Sommerfeld radiation condition describing the outgoing behavior of  $u$. The direct problem of determining $u$  satisfying~\eqref{eq:PDE}-\eqref{eq:radcon}  given $f$ and $k$ is well-posed~\cite{Kirsch1996AnIT}. Let  $\nu(x)$ be the outward normal unit vector to $\partial\Omega$ at $x$ and $0<\underline{k} <\overline{k}$.  In this paper, we are interested in the following inverse problem.

\vspace{0.1cm}
\noindent
\textbf{Inverse problem:} Determine $f$, given Cauchy data $u$ and $\partial u /\partial \nu$ on $\partial  \Omega$ for  $k\in [\underline{k},\overline{k}]$. 
\vspace{0.1cm}

The uniqueness of the solution to the inverse source problem has been established in \cite{Bao2010AMI}. 
Inverse source problems arise in numerous applications in physics and engineering, including antenna synthesis and medical imaging. Theoretical aspects of these problems, particularly uniqueness and stability, have been extensively studied over the past two decades \cite{Acosta2012OnTM,Bao2010AMI,Cheng2016IncreasingSI,Badia2013StabilityEF,Li2020LipschitzSF}.

A substantial body of literature is also devoted to the development of numerical methods for inverse source problems. For the multi-frequency case, various reconstruction approaches have been proposed; see \cite{Alzaalig2017FastAS,Bao2015ARA,Eller2009AcousticSI,Griesmaier2017AFM,Ji2019InverseES,Nguyen2019ACN,Zhang2015FourierMF,nguyen2022reconstructing} and the references therein. In the single-frequency setting, numerous numerical techniques have likewise been developed; see \cite{harris2024direct,harris2023reconstruction, badia2011inverse,nara2008algebraic,zhang2018locating} and the references therein.

In recent years, there has been rapid growth in the application of deep learning algorithms to inverse problems; see \cite{Sun2024TheLR, Chen2020ARO,Khoo2018SwitchNetAN,Chen2019PhysicsinformedNN,Yin2020ANN,Colibazzi2023DeepplugandplayPG, Ning2024ADS, Nguyen2024TAENAM,le2022sampling,Pokkunuru2023ImprovedTO,bao2025pfwnn, Zhang2022SolvingTW,Zhang2023SolvingAI,Zhou2022ANN,Jagtap2020ConservativePN,nguyen2024tnet, ding2022coupling} and references therein. Most existing approaches rely on supervised learning frameworks that require large labeled datasets for training. These data-driven methods strongly depend on the distribution of the training data. While they often achieve high-resolution reconstructions when the true parameters lie within this distribution, their performance may deteriorate for out-of-distribution inputs. Moreover, only a limited number of studies have explored fully unsupervised learning strategies for inverse problems. In addition, several existing approaches \cite{Chen2019PhysicsinformedNN, Zhang2023SolvingAI} depend on internal or auxiliary data that may not always be available in practical scenarios. These limitations motivate the development of unsupervised deep learning methods for inverse problems.

In this paper, we propose an unsupervised two-step Fourier-based method for solving multifrequency inverse source problems that require only boundary measurement data. Our approach is inspired by Physics-Informed Neural Networks (PINNs) \cite{Raissi2019PhysicsinformedNN,Cuomo2022ScientificML,Raissi2017HiddenPM}, where physical models are incorporated into the training process through physics-based loss terms. The proposed framework integrates an imaging function with a model-based deep learning strategy.
In Step 1, the imaging function provides a stable geometric estimate (shape and location) of the unknown source. This imaging formulation can be rewritten as an equation relating the imaging data to the source function $f$. Exploiting the convolutional structure of this relation, we apply the Fourier transform to derive a model equation, which is incorporated into the neural network as a physics-based loss term. In Step 2, we construct the loss function using both the imaging data and the derived model equation, and train a fully connected neural network to efficiently learn the Fourier coefficients of $f$.

The proposed algorithm enables stable and efficient reconstruction of the source function. By learning only the Fourier coefficients rather than representing the unknown function over the entire computational domain, the number of trainable parameters is significantly reduced. Moreover, transforming the convolutional structure into pointwise multiplication in Fourier space lowers computational complexity. Importantly, the method does not require paired input–output training data and therefore avoids the generalization limitations inherent in supervised learning approaches.

The remainder of this paper is organized as follows. In Section \ref{section2}, we introduce the imaging function and present its theoretical analysis. Section \ref{section3} is devoted to the computation of the relevant Fourier coefficients and the derivation of the corresponding Fourier-based model equation. Building upon these results, Section \ref{section4} describes the proposed model-based neural network framework and the associated optimization procedure. Numerical experiments using simulated data are presented in Section \ref{section5}. Finally, conclusions are drawn in Section \ref{section6}.

\section{A stable imaging function }\label{section2}

In this section, we present a stable imaging function, $\mathcal{I}_k(z)$, constructed from the inverse problem data. This function is directly related to the unknown source function $f$ through a simple equation and also provides an estimate of the support of $f$. The imaging function $\mathcal{I}_{k}(z)$ was first introduced in \cite{le2022sampling} for the reconstruction of the shape and location of scattering objects. We first need the the volume potential for the radiated wave $u$. It is  well known that  the solution to problem \eqref{eq:PDE}-\eqref{eq:radcon} is given by  
\begin{align}\label{eq:Volume}
    u(x) = \int_{\Omega} \Phi_k(x,y) f(y) dy,\quad x\in \R^n,
\end{align}
where $\Phi_k$ is the Green's function of problem \eqref{eq:PDE}-\eqref{eq:radcon} given by
\[
\Phi_k(x,y) =
\begin{cases}
\displaystyle 
\frac{i}{4} H_0^{(1)}\!\bigl(k\lvert x - y\rvert\bigr), & n = 2, \\[8pt]
\displaystyle 
\frac{\exp\bigl(ik\lvert x - y\rvert\bigr)}{4\pi \lvert x - y\rvert}, & n = 3,
\end{cases}
\]
for $x, y \in \R^{n}$ and $x\neq y$. Here $H^{(1)}_0$ is the Hankel function of the first kind and zero order.

For $z\in \R^n$, the  imaging function is defined by
$$\mathcal{I}_{k}(z) = \int_{\partial \Omega} \left(\dfrac{\partial \operatorname{Im}\Phi_k(x,z)}{\partial\nu(x)}u(x,k) -  \operatorname{Im}\Phi_k(x,z)\dfrac{\partial u(x,k)}{\partial\nu(x)}\right) dS(x).$$
The next theorem partly explains the important behavior of  $\mathcal{I}_{k}(z)$ that it is relatively large when $z \in \text{supp}(f)$ and that it is small when $z \notin \text{supp}(f)$.

\begin{theorem}\label{thm:imaging}
For $z \in \R^n$,    the imaging function satisfies
    \begin{equation}\label{eq:Iz}
        \mathcal{I}_{k}(z) =  \int_{\Omega} \operatorname{Im}\Phi_k(z-y)f(y) dy , 
    \end{equation}
    where
    $$\operatorname{Im}\Phi_k(x) = \begin{cases}
        \frac{1}{4}J_0(k|x|),\qquad  n=2, \\ \notag
        \frac{k}{4\pi} j_0(k|x|),\hspace{0.65 cm}  n=3.
    \end{cases}$$
\end{theorem}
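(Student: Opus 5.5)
The plan is to use Green's second identity together with the volume potential representation \eqref{eq:Volume}. Fix $z\in\R^n$ and set $w(x)=\operatorname{Im}\Phi_k(x,z)$.

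\textbf{Step 1: $w$ is a smooth Helmholtz solution.} The singularities of $\Phi_k$ at $x=z$ are purely real: $Y_0$ in two dimensions and $\cos(k|x-z|)/(4\pi|x-z|)$ in three. Hence $w$ extends to a real-analytic function on all of $\R^n$ with $\Delta w+k^2w=0$ everywhere. Explicitly, $w(x)=\tfrac14 J_0(k|x-z|)$ for $n=2$ and $w(x)=\tfrac{\sin(k|x-z|)}{4\pi|x-z|}=\tfrac{k}{4\pi}j_0(k|x-z|)$ for $n=3$. This also gives the stated formula for $\operatorname{Im}\Phi_k$.

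\textbf{Step 2: regularity of $u$.} From \eqref{eq:Volume}, since $f\in L^2$ has compact support in $\Omega$, the function $u$ lies in $H^2_{\mathrm{loc}}(\R^n)$ and satisfies $\Delta u+k^2u=f$ in $\Omega$. Moreover $u$ is smooth in a neighborhood of $\partial\Omega$, because $\operatorname{supp} f$ is a compact subset of $\Omega$. So the Cauchy data are classical, and Green's second identity on the Lipschitz domain $\Omega$ applies with $u\in H^2(\Omega)$ and $w\in C^\infty(\overline\Omega)$.

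\textbf{Step 3: Green's second identity.} Applying it gives
\[
\mathcal{I}_k(z)=\int_{\partial\Omega}\Big(\frac{\partial w}{\partial\nu}u-w\frac{\partial u}{\partial\nu}\Big)dS=\int_\Omega (u\Delta w-w\Delta u)\,dx .
\]
Substituting $\Delta w=-k^2w$ and $\Delta u=f-k^2u$, the $k^2$ terms cancel and we obtain
\[
\mathcal{I}_k(z)=-\int_\Omega w f\,dx .
\]

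\textbf{Step 4: the sign.} This is the only delicate point. With the PDE written as $\Delta u+k^2u=f$, the radiating solution is actually $u=-\int\Phi_k f$, since $\Phi_k$ is the fundamental solution of $-(\Delta+k^2)$. The computation in Step 3 then yields $\mathcal{I}_k(z)=-\int \operatorname{Im}\Phi_k(z-y)f(y)\,dy$. The stated identity \eqref{eq:Iz} follows if one adopts the paper's representation \eqref{eq:Volume} as the definition of $u$, which amounts to flipping the sign of $f$ in the PDE. I would therefore do the computation with $u$ given by \eqref{eq:Volume}, so that $(\Delta+k^2)u=-f$, and this reproduces \eqref{eq:Iz} exactly.

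\textbf{Alternative route.} One can avoid the PDE for $u$ altogether. Write $\operatorname{Im}\Phi_k=(\Phi_k-\overline{\Phi_k})/(2i)$ and insert \eqref{eq:Volume} into the definition of $\mathcal{I}_k(z)$. Interchanging the order of integration, each inner boundary integral $\int_{\partial\Omega}\big(\partial_\nu w\,\Phi_k(x,y)-w\,\partial_\nu\Phi_k(x,y)\big)\,dS(x)$ is evaluated by Green's representation formula for the entire Helmholtz solution $w$, and equals $w(y)$ up to the sign convention. Whichever route is used, the main care point is the bookkeeping of the sign convention in Step 4.
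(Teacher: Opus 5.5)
Your proof is correct, and so is your sign diagnosis. The $\Phi_k$ used in the paper satisfies $(\Delta+k^2)\Phi_k(\cdot,y)=-\delta_y$, so \eqref{eq:Volume} solves $(\Delta+k^2)u=-f$ rather than \eqref{eq:PDE}. Hence \eqref{eq:Iz} holds as stated for $u$ given by \eqref{eq:Volume}, and with a minus sign for the radiating solution of \eqref{eq:PDE}--\eqref{eq:radcon}.

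The paper's proof is essentially your alternative route, without the unnecessary splitting of $\operatorname{Im}\Phi_k$ into $\Phi_k$ and $\overline{\Phi_k}$. It inserts \eqref{eq:Volume} and interchanges the integrals. It then evaluates the inner boundary integral by Green's representation formula for the entire Helmholtz solution $\operatorname{Im}\Phi_k(\cdot,z)$ at each $y\in\operatorname{supp} f$, where it equals exactly $\operatorname{Im}\Phi_k(z,y)$. Since that argument never uses \eqref{eq:PDE}, it inherits the sign of \eqref{eq:Volume} automatically, and the mismatch goes unnoticed.

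Your main route instead applies Green's second identity once to the pair $(u,w)$ and uses only the equation satisfied by $u$ in $\Omega$. It needs the regularity remark of your Step 2 but no Fubini argument. It also proves slightly more: $\mathcal{I}_k$ is the same for every solution of the inhomogeneous equation in $\Omega$, independently of the radiation condition, because Green's identity annihilates homogeneous Helmholtz solutions. And it is the route that brings the sign convention into the open.

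In a final write-up, state $(\Delta+k^2)u=-f$ already in Step 2, instead of asserting $\Delta u+k^2u=f$ there and correcting it in Step 4.
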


\begin{proof}
    The proof uses a similar argument given in \cite{le2022sampling}. By the volume integral \eqref{eq:Volume},  
    \begin{equation}\label{proof:Iz}
        \mathcal{I}_{k}(z)=\int_{\Omega}\int_{\partial \Omega}\left(\dfrac{\partial \operatorname{Im}\Phi_k(x,z)}{\partial\nu(x)}\Phi_k(x,y) -  \operatorname{Im}\Phi_k(x,z)\dfrac{\partial \Phi_k(x,y)}{\partial\nu(x)}\right)dS(x) f(y)dy.
    \end{equation}
    Since $\Delta \operatorname{Im}\Phi_k(z,y)+k^2\operatorname{Im}\Phi_k(z,y)=0$ for all $y,z\in \R^n$ and $\operatorname{Im}\Phi_k$ is a regular function, the Green's integral representation implies that
    $$\int_{\partial \Omega}\left(\dfrac{\partial \operatorname{Im}\Phi_k(x,z)}{\partial\nu(x)}\Phi_k(x,y) -  \operatorname{Im}\Phi_k(x,z)\dfrac{\partial \Phi_k(x,y)}{\partial\nu(x)}\right)dS(x) =\operatorname{Im}\Phi_k(z,y).$$
    Substituting this identity into \eqref{proof:Iz} yields \eqref{eq:Iz}.
\end{proof}
It is well-known that $J_0(k|y-z|)$ and $j_0(k|y-z|)$ have a significant peak when $z=y$ and decays rapidly when $z$ moves away from $y$. This key behavior and Theorem \ref{thm:imaging} provide a justification to the resolution of the imaging function.

As shown in the figures below, the imaging function offers a preliminary estimate of the unknown source's location and shape, which guides the application of the periodization technique in Section \ref{section3}.

\begin{figure}[H]
    \centering
    \begin{subfigure}[b]{0.4\textwidth}
         \centering
        \includegraphics[width=\textwidth]{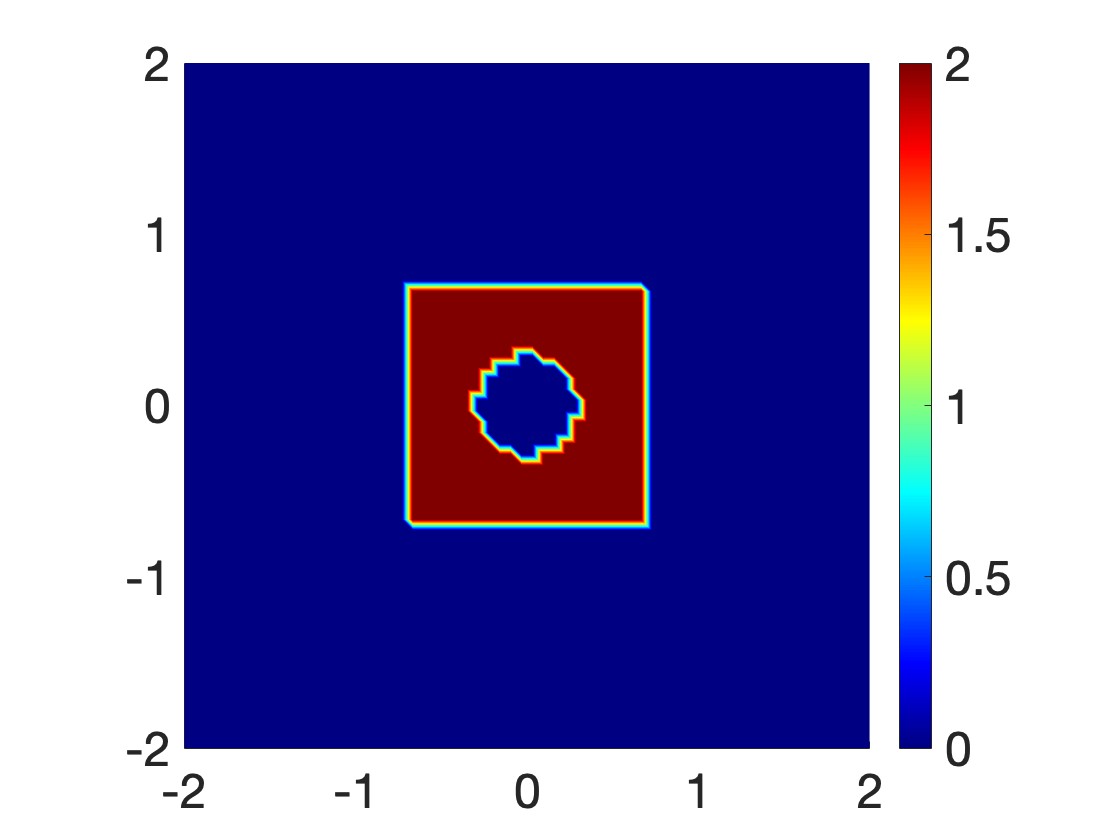}
        \caption{Source $f$}
        
    \end{subfigure}
    \begin{subfigure}[b]{0.4\textwidth}
             \centering
         \includegraphics[width=\textwidth]{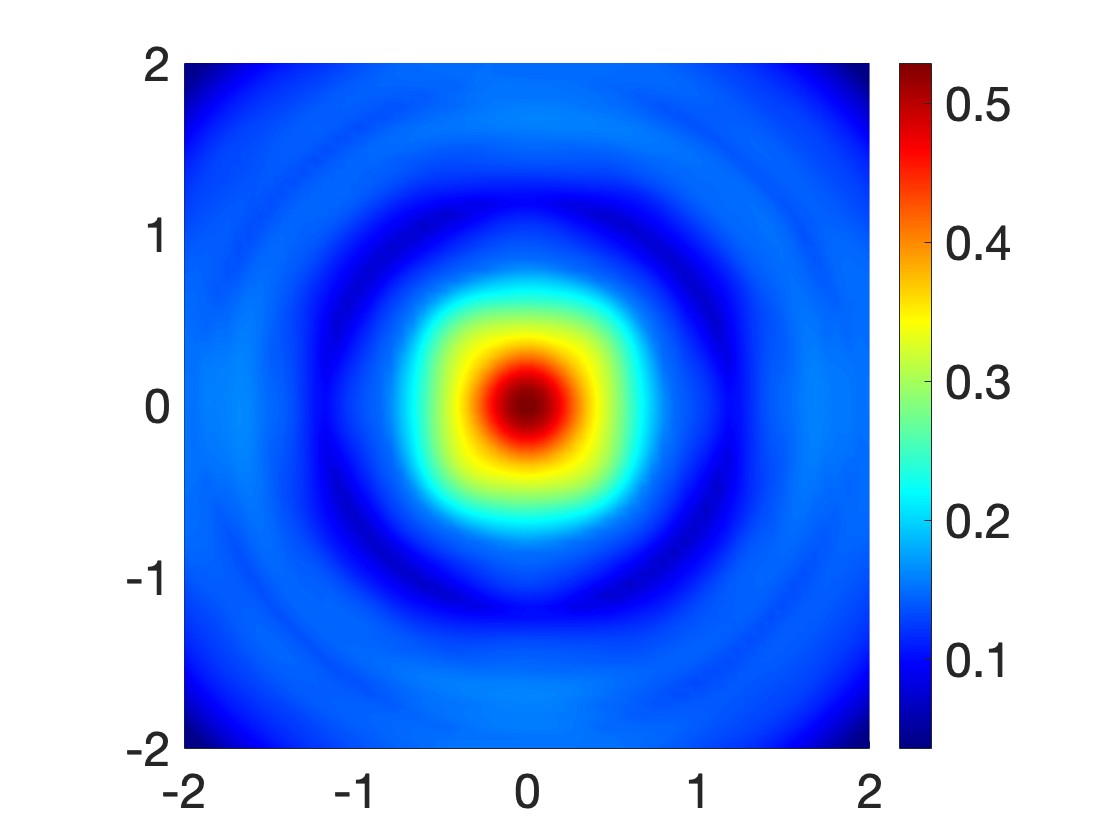}
        \caption{$\sum_{k} |\mathcal{I}_{k}|$ ($10$ evenly distributed $k$ in $[2,10]$) }
        \end{subfigure}
    \caption{Source $f$  and a preliminary estimate of its location and shape using $\mathcal{I}_{k}$.}
\end{figure}

The next theorem establishes the stability of the imaging function we consider.
\begin{theorem}  For $\delta>0$,
denote by $u_\delta(\cdot,k)$ the noisy data  satisfying 
\begin{align}\label{ineq:stability1}
\| u(\cdot,k) - u_\delta (\cdot,k)\|_{L^2(\partial \Omega)}  \leq \delta,
\end{align}
and 
\begin{align}\label{ineq:stability2}
\left\| \dfrac{\partial u(\cdot,k)}{\partial\nu} - \dfrac{\partial u_\delta(\cdot,k)}{\partial\nu}\right\|_{L^2(\partial \Omega)}  \leq \delta.
\end{align}
Define $$\mathcal{I}_{k,\delta}(z) = \int_{\partial \Omega} \left(\dfrac{\partial \operatorname{Im}\Phi_k(x,z)}{\partial\nu(x)}u_\delta(x,k) -  \operatorname{Im}\Phi_k(x,z)\dfrac{\partial u_\delta(x,k)}{\partial\nu(x)}\right) dS(x).$$  Then the following stability property holds
$$
|\mathcal{I}_k(z)- \mathcal{I}_{k,\delta}(z) |\leq C \delta, \quad \text{for all } z \in \mathbb{R}^n,
$$
where $C>0$ is independent of $z$ and $\delta$.
\end{theorem}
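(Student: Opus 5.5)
By linearity of the definition in the data, the difference is again a boundary integral of the same type:
\[
\mathcal{I}_k(z)-\mathcal{I}_{k,\delta}(z)=\int_{\partial\Omega}\left(\frac{\partial \operatorname{Im}\Phi_k(x,z)}{\partial\nu(x)}\bigl(u-u_\delta\bigr)(x,k)-\operatorname{Im}\Phi_k(x,z)\,\frac{\partial (u-u_\delta)(x,k)}{\partial\nu(x)}\right)dS(x).
\]
I would apply the Cauchy--Schwarz inequality on $\partial\Omega$ to each of the two terms and then use \eqref{ineq:stability1}--\eqref{ineq:stability2}. This gives
\[
|\mathcal{I}_k(z)-\mathcal{I}_{k,\delta}(z)|\le \delta\left(\Bigl\|\tfrac{\partial \operatorname{Im}\Phi_k(\cdot,z)}{\partial\nu}\Bigr\|_{L^2(\partial\Omega)}+\|\operatorname{Im}\Phi_k(\cdot,z)\|_{L^2(\partial\Omega)}\right).
\]
It therefore suffices to bound the bracket by a constant independent of $z\in\R^n$.

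For that uniform bound I would use the explicit forms from Theorem \ref{thm:imaging}. For $n=2$ we have $\operatorname{Im}\Phi_k(x,z)=\frac14 J_0(k|x-z|)$ and $\nabla_x \operatorname{Im}\Phi_k(x,z)=-\frac{k}{4}J_1(k|x-z|)\frac{x-z}{|x-z|}$. For $n=3$ we have $\operatorname{Im}\Phi_k=\frac{k}{4\pi}j_0(k|x-z|)$ and gradient magnitude $\frac{k^2}{4\pi}|j_1(k|x-z|)|$. The standard bounds $|J_0|,|J_1|,|j_0|,|j_1|\le 1$ on $[0,\infty)$ then give, uniformly in $x$ and $z$, $|\operatorname{Im}\Phi_k|\le \frac14$ or $\frac{k}{4\pi}$, and $|\partial_\nu \operatorname{Im}\Phi_k|\le \frac{k}{4}$ or $\frac{k^2}{4\pi}$. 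The singular direction $x-z$ is harmless since $J_1(0)=j_1(0)=0$; indeed $\operatorname{Im}\Phi_k$ is entire in $x-z$. Integrating over $\partial\Omega$ gives the bracket $\le |\partial\Omega|^{1/2}\,(\frac14+\frac k4)$ in 2D and $\le |\partial\Omega|^{1/2}\,(\frac{k}{4\pi}+\frac{k^2}{4\pi})$ in 3D. Thus $C$ depends only on $k$ and $\partial\Omega$, not on $z$ or $\delta$. If uniformity over $k\in[\underline k,\overline k]$ is wanted, one replaces $k$ by $\overline{k}$.

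The only point needing care is this uniform-in-$z$ bound. In particular, when $z\in\partial\Omega$ one must note that $\operatorname{Im}\Phi_k$, unlike $\Phi_k$, has no singularity, so nothing degenerates. The bounds $|J_m(t)|\le1$ and $|j_m(t)|\le 1$ for real $t$ are classical. Alternatively, one can obtain them directly from the integral representations $J_0(k|x|)=\frac{1}{2\pi}\int_{S^1}e^{ik x\cdot d}\,ds(d)$ and $j_0(k|x|)=\frac{1}{4\pi}\int_{S^2}e^{ikx\cdot d}\,ds(d)$. Differentiating these under the integral bounds $|\nabla_x|$ by $k$ times the same normalization, which yields the same constants without quoting special-function facts.
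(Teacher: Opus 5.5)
Your proposal is correct and follows the same route as the paper: by linearity, apply Cauchy--Schwarz to each boundary term and then use the two data bounds. Your explicit uniform bound via $|J_m|,|j_m|\le 1$ (or the plane-wave integral representations) supplies what the paper leaves implicit, since the paper only takes a supremum over $z\in\Omega$ without checking that it is finite, and your bound holds for all $z\in\R^n$ as the statement claims. (Incidentally, the paper's constant should carry a factor $2$ rather than $\tfrac12$ in front of the maximum; your sum form avoids this slip.)
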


\begin{proof} For all $z\in \Omega$, by Cauchy-Schwarz inequality, we have
        \begin{align*}
        |\mathcal{I}_k(z)- \mathcal{I}_{k,\delta}(z) | 
        & \leq \int_{\partial \Omega} |u(x, k) - u_{\delta}(x, k)  | \left| \dfrac{\partial \operatorname{Im}\Phi_k(x,z)}{\partial\nu(x)} \right| dS(x) \\ \notag 
        &\hspace{2 cm}+ \int_{\partial \Omega} \left|\dfrac{\partial u(x,k)}{\partial\nu(x)} - \dfrac{\partial u_\delta(x,k)}{\partial\nu(x)}  \right| \left| \operatorname{Im}\Phi_k(x,z) \right| dS(x)\\
        & \leq \| u(\cdot,k) - u_\delta (\cdot,k)\|_{L^2(\partial \Omega)} \left\|\dfrac{\partial \operatorname{Im}\Phi_k(\cdot,z)}{\partial\nu}\right\|_{L^2(\partial \Omega)}\\ \notag
        &\hspace{2 cm}+\left\| \dfrac{\partial u(\cdot,k)}{\partial\nu} - \dfrac{\partial u_\delta(\cdot,k)}{\partial\nu}\right\|_{L^2(\partial \Omega)}\left\|\operatorname{Im}\Phi_k(\cdot,z)\right\|_{L^2(\partial \Omega)} .
    \end{align*}
    We complete the proof by using \eqref{ineq:stability1}, \eqref{ineq:stability2} and denoting $$C:=\dfrac{1}{2}\max\left\{\sup_{z\in\Omega}\left\|\dfrac{\partial \operatorname{Im}\Phi_k(\cdot,z)}{\partial\nu}\right\|_{L^2(\partial \Omega)},\sup_{z\in\Omega}\left\|\operatorname{Im}\Phi_k(\cdot,z)\right\|_{L^2(\partial \Omega)}\right\}.$$
\end{proof}

\begin{remark}
    If $\partial\Omega$ is the boundary of a ball with sufficiently large radius, the radiation condition implies that $\partial u/\partial \nu \approx ik u$. Accordingly, $\mathcal{I}_k(z)$ can be adapted to employ the far-field measurement $u$ as follows
\begin{align}\notag
    \mathcal{I}_{k}(z) =\int_{\partial \Omega} \left(\dfrac{\partial \operatorname{Im}\Phi_k(x,z)}{\partial\nu(x)}u(x,k) - ik \operatorname{Im}\Phi_k(x,z) u(x,k) \right) dS(x).
\end{align}
\end{remark}

\section{A simple model equation in Fourier domain}\label{section3}
Note that \eqref{eq:Iz} can be considered as a model equation. However, using a direct discretization of this equation usually leads to large sets of parameters particularly for the 3D case. To address this challenge, it is advantageous to reformulate the equation in the spectral domain, where functions are represented through their Fourier series expansions. In this framework, a function can be approximated using a relatively small number of Fourier coefficients, enabling efficient computations while preserving the essential characteristics of the original function. 
To this end, we will periodize equation \eqref{eq:Iz} and utilize the convolution structure with explicitly computed Fourier coefficients of the periodized kernel function $\operatorname{Im}\Phi_k$. By combining this periodized formulation with the Fourier transform, we obtain a model equation that facilitates efficient computation. The periodization technique is largely inspired by \cite{vainikko2000fast}. 

Let ${B_\rho}$ be the ball with radius $\rho$ that is centered at the origin such that 
$\text{supp}(f)\subset \overline{B_\rho}$. We define
\begin{equation}
    \K_k (x) = \begin{cases}\label{per:ImG}
    \operatorname{Im}\Phi_k(x),\hspace{0.5 cm} x\in B_R, \\ 
    0,\hspace{1.8 cm} x\in G_R\setminus \overline{B_R}, 
\end{cases}\qquad R\ge 2\rho,
\end{equation}
where $G_R$ is the open cube centered at the origin with side length $2R$,
$$G_R=\{x=(x_1,\ldots,x_n)\in \R^n: |x_\ell|<R, \ell=1,\ldots,n \}.$$
Then we extend  $\K_k$ from $G_R$ to $\R^n$ as a $2R$-periodic function. The source function $f$ is also periodized in the same way by $f_{\text{per}}$. Then the periodized version of \eqref{eq:Iz} is given by
\begin{equation}\label{eq:periodizedIz}
    \mathcal{I}_k(z)=\int_{G_R} \K_k(z-y) f_{\text{per}}(y)dy,\quad z\in \R^n.
\end{equation}
Using the estimate of the location and shape of $\text{supp}(f)$ provided by the imaging function discussed in the previous section, the ball $B_\rho$ can be chosen efficiently in the implementation of our algorithm. Although the periodized equation \eqref{eq:periodizedIz} differs from the original equation \eqref{eq:Iz}, the two equations coincide  on \(B_\rho\) which is sufficient for the determination of $f$. 

Our next goal is to transform \eqref{eq:periodizedIz} to the Fourier domain. First,
recall that the trigonometric orthonormal basis of $L^2(G_R)$ is given by
$$\varphi_j(x)=(2R)^{-n/2} e^{i\pi j\cdot\frac{x}{R}},\qquad j=(j_1,\ldots,j_n)\in \Z^n.$$
For $g\in L^2(G_R)$ and $j\in \Z^n$, the Fourier coefficients of $g$ are given by
$$\widehat{g}(j)=\int_{G_R} g(x)\overline{\varphi}_j(x)dx.$$
Now, observe that  the right-hand side of \eqref{eq:periodizedIz} exhibits a convolution structure. From this, we derive the \textbf{model equation}
\begin{equation}\label{eq:modeleq}
    \mathcal{I}_k(z)=(2R)^{n/2}\mathcal{F}^{-1}\left[\widehat{\mathcal{K}}_k(j)\widehat{f}_{\text{per}}(j) \right](z),\qquad j\in \mathbb{Z}^n, \, z\in B_\rho,
\end{equation}
where $\mathcal{F}$ is the Fourier transform.
 To use this model equation, we need explicit   Fourier coefficients of $\K_k$.
For our calculations, it is convenient to introduce the parameters
$$\lambda_j = k^2 - \dfrac{\pi^2}{R^2}|j|^2,$$
where $|j|=\sqrt{j_1^2+\ldots+j_n^2}$ with $j\in \Z^n$.

We now state a theorem that yields an explicit expression for $\widehat{\K}_k$.
\begin{theorem}\label{thm:Fcoef}
    The Fourier coefficients of $\K_k$ are given by
    \begin{itemize}
        \item for $n=2$,
        \begin{equation}\label{Fco:n2}
            \widehat{\K}_k(j) = \begin{cases}
        \frac{\pi R}{8} \left[J_0^2(kR)+J_1^2(kR)\right], \hspace{6.5 cm}  \pi |j|= kR, \\ 
            -\frac{1}{4}\cdot\frac{\pi R}{k^2R^2-\pi^2|j|^2}\left[\pi|j|J_0(kR)J_1(\pi|j|)-kR J_1(kR)J_0(\pi|j|)\right],\hspace{0.85cm} \pi |j|\neq kR, 
        \end{cases} 
        \end{equation}
       
        \item for $n=3$,
        \begin{equation}\label{Fco:n3}
            \widehat{\K}_k(j) = \begin{cases}
        (2R)^{-3/2}\frac{ R^2}{2}\left\{ kR \left[j_0^2(kR)+j_1^2(kR)\right]-j_0(kR)j_1(kR)\right\}, \hspace{2.25cm}  \pi |j|= kR, \\
           kR(2R)^{-3/2}\frac{R^2}{k^2R^2-\pi^2|j|^2}\left[kRj_0(\pi |j|)j_1(kR)-\pi|j|j_0(kR)j_1(\pi|j|)\right],\hspace{0.7cm} \pi |j|\neq kR,|j|\neq 0,\\ 
           \dfrac{1}{k}(2R)^{-3/2}\left(\dfrac{1}{k}\sin(kR)-R\cos(kR) \right),\hspace{4.9cm} |j|=0,
        \end{cases} 
        \end{equation}
    \end{itemize}
    for all $j\in \Z^n$. Moreover,
    \begin{equation}\label{asymp}
        \widehat{\K}_k(j) =  \begin{cases}
    \mathcal{O}\left(|j|^{-3/2}\right), \quad  \text{if }  n=2,\\ 
    \mathcal{O}\left(|j|^{-2}\right), \hspace{0.65
    cm}\text{if }  n=3,
\end{cases}
    \end{equation}
as $|j|\to \infty$.
\end{theorem}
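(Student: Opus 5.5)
Since $\K_k$ vanishes on $G_R\setminus\overline{B_R}$ and is radial on $B_R$, I plan to compute
\[
\widehat{\K}_k(j)=(2R)^{-n/2}\int_{B_R}\operatorname{Im}\Phi_k(x)\,e^{-i\pi j\cdot x/R}\,dx
\]
in polar (resp. spherical) coordinates, reducing it to a one-dimensional Bessel integral. Set $\xi=\pi|j|/R$.

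For $n=2$, the angular integral of $e^{-i\xi r\cos\theta}$ gives $2\pi J_0(\xi r)$. Hence
\[
\widehat{\K}_k(j)=(2R)^{-1}\frac{2\pi}{4}\int_0^R J_0(kr)J_0(\xi r)\,r\,dr.
\]
For $n=3$, the angular integral gives $4\pi j_0(\xi r)$. Hence
\[
\widehat{\K}_k(j)=(2R)^{-3/2}\frac{4\pi k}{4\pi}\int_0^R j_0(kr)j_0(\xi r)\,r^2\,dr.
\]
Both integrals I will evaluate with the Lommel integral. For $k\neq\xi$,
\[
\int_0^R J_\nu(kr)J_\nu(\xi r)\,r\,dr=\frac{R\left[\xi J_\nu(kR)J_\nu'(\xi R)-kJ_\nu'(kR)J_\nu(\xi R)\right]}{k^2-\xi^2}.
\]
With $J_0'=-J_1$, $\xi R=\pi|j|$ and multiplication of numerator and denominator by $R^2$, this gives the stated formula for $\pi|j|\neq kR$. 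The overall prefactor should come out as $\pi/(4R)$ times the integral; I will check the constant against the displayed $(\pi R/4)\cdot(\cdots)/(k^2R^2-\pi^2|j|^2)$.

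For the diagonal case $\xi=k$ I will use the companion formula
\[
\int_0^R J_0(kr)^2\,r\,dr=\tfrac{R^2}{2}\left[J_0^2(kR)+J_1^2(kR)\right],
\]
which can also be obtained as the limit $\xi\to k$ of the off-diagonal formula via L'Hôpital. In three dimensions I will use $j_0(x)=\sqrt{\pi/(2x)}\,J_{1/2}(x)$. This gives
\[
r^2 j_0(kr)j_0(\xi r)=\frac{\pi}{2\sqrt{k\xi}}\,r\,J_{1/2}(kr)J_{1/2}(\xi r),
\]
so the same Lommel identity with $\nu=1/2$ applies. Converting back with $j_0'=-j_1$ and $J_{1/2}'$ expressed through $j_0$ and $j_1$, I expect the stated off-diagonal and diagonal forms, the latter including the $-j_0j_1$ term produced by differentiating $\sqrt{x}\,j_0(x)$.

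In three dimensions there is also the case $j=0$, which I will compute directly:
\[
\int_0^R \frac{\sin(kr)}{kr}\,r^2\,dr=\frac{1}{k^3}\left(\sin(kR)-kR\cos(kR)\right).
\]
Multiplying by $k(2R)^{-3/2}$ gives $\frac{1}{k}(2R)^{-3/2}\left(\frac{1}{k}\sin(kR)-R\cos(kR)\right)$, as claimed.

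For the asymptotics I will take $|j|\to\infty$ with $k$ and $R$ fixed. Then the denominator is of size $\pi^2|j|^2$. For $n=2$, the bound $|J_\nu(x)|\le Cx^{-1/2}$ makes the numerator $\mathcal{O}(|j|\cdot|j|^{-1/2})+\mathcal{O}(|j|^{-1/2})=\mathcal{O}(|j|^{1/2})$, so the ratio is $\mathcal{O}(|j|^{-3/2})$. For $n=3$, the bound $|j_\nu(x)|\le C/x$ makes the numerator $\mathcal{O}(1)$, so the ratio is $\mathcal{O}(|j|^{-2})$.

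The main obstacle is bookkeeping rather than ideas. It lies in the $n=3$ conversion between $J_{1/2}$ and $j_0,j_1$: tracking the factors $\sqrt{k\xi}$ and $R$ so that the result collapses to $kR\,(2R)^{-3/2}R^2/(k^2R^2-\pi^2|j|^2)$ times the bracket, and in the diagonal limit $\xi\to k$. If this gets messy, I will instead compute the $n=3$ integral directly with $j_0(x)=\sin x/x$ and product-to-sum formulas, then rewrite the result in terms of $j_0,j_1$.
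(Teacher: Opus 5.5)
Your proposal is correct, but it takes a different route from the paper. The paper computes no integrals here. It notes that $\K_k=\operatorname{Im}\mathcal{P}_k$, where $\mathcal{P}_k$ is the periodized Green's function, and quotes the closed-form coefficients $\widehat{\mathcal{P}}_k(j)$ from \cite{vainikko2000fast}. It then takes imaginary parts; this is legitimate because $\mathcal{P}_k$ is radial, so its coefficients are integrals against the real kernel $\cos(\pi j\cdot x/R)$. Finally it rewrites the result using $J_2(x)=\frac{2}{x}J_1(x)-J_0(x)$ in 2D and $j_0(x)=\sin x/x$, $j_1(x)=\sin x/x^2-\cos x/x$ in 3D.

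You instead derive everything from scratch, by reducing to radial integrals and applying Lommel's formulas. The bookkeeping you flagged as the obstacle does close:
\begin{itemize}
\item In the $\nu=1/2$ Lommel numerator, write $J_{1/2}'(x)=\sqrt{2/\pi}\bigl(\frac{j_0(x)}{2\sqrt{x}}-\sqrt{x}\,j_1(x)\bigr)$. The two terms coming from its $j_0(x)/(2\sqrt{x})$ part are equal and cancel. This leaves $\int_0^R r^2j_0(kr)j_0(\xi r)\,dr=R^2\,[k\,j_1(kR)j_0(\xi R)-\xi\,j_0(kR)j_1(\xi R)]/(k^2-\xi^2)$, which rescales to the stated off-diagonal form.
\item For the diagonal case, take the companion formula $\int_0^R rJ_\nu^2(kr)\,dr=\frac{R^2}{2}\bigl[J_\nu'(kR)^2+(1-\frac{\nu^2}{k^2R^2})J_\nu^2(kR)\bigr]$ with $\nu=1/2$. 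It gives $\frac{R^2}{2k}\{kR[j_0^2(kR)+j_1^2(kR)]-j_0(kR)j_1(kR)\}$, as required.
\item Your product-to-sum fallback also works; it reproduces the paper's intermediate expressions verbatim.
\item In 2D, $j=0$ needs no separate case, because Lommel's identity holds with $\xi=0$.
\end{itemize}

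Your route is self-contained, and it only ever touches the smooth kernel $\operatorname{Im}\Phi_k$, so no singularity has to be handled as in the computation of $\widehat{\mathcal{P}}_k$. It also yields directly the representations $\widehat{\K}_k(j)=\frac{\pi}{4R}\int_0^R rJ_0(kr)J_0(\xi r)\,dr$ and $\widehat{\K}_k(j)=k(2R)^{-3/2}\int_0^R r^2j_0(kr)j_0(\xi r)\,dr$. The paper only recovers these later, in the proof of Theorem~\ref{thm:ring}, by reading its closed forms backwards. The paper's route is shorter but rests on an external computation. Your asymptotic argument is the same as the paper's.
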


\begin{proof}
First, let $\mathcal{P}_k$ denote the periodized Green’s function associated with $\Phi_k$, defined analogously to \eqref{per:ImG}. Since $\mathcal{K}_k = \operatorname{Im}\mathcal{P}_k$, it immediately follows that $\widehat{\mathcal{K}}_k = \operatorname{Im}\widehat{\mathcal{P}}_k$. The calculation of its Fourier coefficients $\widehat{\mathcal{P}}_k$ is carried out in \cite{vainikko2000fast} for both $n=2$ and $n=3$.  Using these results, for $n=2$, we have
$$
        \widehat{\K}_k(j) = \begin{cases}
        \frac{\pi}{8k}\left\{J_0(kR)\left[J_1(kR)+\frac{1}{2}kR(J_0(kR)-J_2(kR))\right]+kRJ_1^2(kR) \right\}, \hspace{1 cm}  \pi |j|= kR, \\ \notag
            -\frac{1}{4}\cdot\frac{\pi R}{k^2R^2-\pi^2|j|^2}\left[\pi|j|J_0(kR)J_1(\pi|j|)-kR J_1(kR)J_0(\pi|j|)\right],\hspace{1.8cm} \pi |j|\neq kR.
        \end{cases} 
        $$
        When $\pi|j| = kR$, using the recurrence relation $J_2(kR) = \frac{2}{kR}J_1(kR)-J_0(kR)$, we can simplify $\widehat{\K}_k(j)$ as
    $$\widehat{\K}_k(j) =\dfrac{\pi R}{8} \left[J_0^2(kR)+J_1^2(kR)\right],$$
    which gives \eqref{Fco:n2}. 
    Similar for the case $n=3$, we have
    $$\widehat{\K}_k(j) = \begin{cases}
        \frac{ R}{2k}(2R)^{-3/2}\left(1-\frac{1}{kR}\cos(kR)\sin(kR)\right), \hspace{5.3cm}  \pi |j|= kR, \\ \notag
            \frac{k}{\pi}\cdot\frac{R^2}{k^2R^2-\pi^2|j|^2}(2R)^{-3/2}\left(\frac{\pi}{k}\sin(kR)\cos(\pi|j|)-\frac{R}{|j|}\cos(kR)\sin(\pi|j|)\right),\hspace{0.3 cm} \pi |j|\neq kR, |j|\neq 0, \\ \notag
            \frac{1}{k}(2R)^{-3/2}\left(\frac{1}{k}\sin(kR)-R\cos(kR) \right),\hspace{5.6cm} |j|=0.
        \end{cases}  $$
 With the observation that $$j_0(x) = \dfrac{\sin(x)}{x},\qquad j_1(x) = \dfrac{\sin(x)}{x^2}-\dfrac{\cos(x)}{x},$$
we can write $\widehat{\K}_k$ as in \eqref{Fco:n3}.
 The asymptotic form \eqref{asymp} is obtained from the behaviors of Bessel functions 
 $$J_\alpha(z) = \mathcal{O}\left(z^{-1/2}\right),\qquad j_\alpha(z) = \mathcal{O}\left(z^{-1}\right), \quad \text{for all } \alpha\in \N$$
 as $|z|\to \infty$.
\end{proof}

The above theorem demonstrates that $|\widehat{\K}_k|$ decays rapidly, with a rate of $|j|^{-3/2}$ for $n=2$ and of $|j|^{-2}$ for $n=3$ as $|j|\to\infty$.
The next theorem shows that, for $|j|\ge kR/\pi$, the maximum of $|\widehat{\mathcal{K}_k}(j)|$ is attained at $|j|=kR/\pi$, which guides the choice of an appropriate truncation for the Fourier series representation of $\K_k$. This truncation is important for the efficiency of training our neural network, as discussed in the next section.
\begin{theorem}\label{thm:ring} Let 
$$\mu = \begin{cases}
    \frac{\pi R}{8} \left[J_0^2(kR)+J_1^2(kR)\right], \hspace{5.15 cm} n=2, \\ \notag
    (2R)^{-3/2}\frac{ R^2}{2}\left[ kR [j_0^2(kR)+j_1^2(kR)]-j_0(kR)j_1(kR)\right],\quad n=3.
\end{cases}$$
Then, for all $j\in \Z^n$ such that $\pi|j|\ge kR \ge 1$, we have $$|\widehat{\K}_k(j)|\le \mu.$$

\end{theorem}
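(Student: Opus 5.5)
The plan is to return to the definition of the Fourier coefficients, not to the closed forms \eqref{Fco:n2}--\eqref{Fco:n3}. Since $\K_k$ is radial and supported in $B_R\subset G_R$,
$$\widehat{\K}_k(j)=(2R)^{-n/2}\int_{B_R}\operatorname{Im}\Phi_k(x)\,e^{-i\pi j\cdot x/R}\,dx .$$
Write $\xi=\pi|j|/R$ and pass to polar or spherical coordinates. This gives, up to a positive constant $c_n$ independent of $\xi$,
$$\widehat{\K}_k(j)=c_n\int_0^R J_0(kr)J_0(\xi r)\,r\,dr \quad (n=2),\qquad \widehat{\K}_k(j)=c_n\int_0^R j_0(kr)j_0(\xi r)\,r^2\,dr \quad (n=3).$$
Denote this integral by $c_nF(\xi)$. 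The hypothesis $\pi|j|\ge kR$ is exactly $\xi\ge k$. At $\xi=k$ the integrand is a square, so $c_nF(k)=\widehat{\K}_k(j)|_{\pi|j|=kR}=\mu$ by Theorem \ref{thm:Fcoef}. (I will double-check the normalization constant against $\mu$, but this identification is what the statement suggests.)

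The key step is Cauchy--Schwarz in the weighted space $L^2((0,R), r^{n-1}dr)$:
$$|F(\xi)|\le A(k)^{1/2}A(\xi)^{1/2},\qquad A(a):=\int_0^R Z(ar)^2\,r^{n-1}\,dr,$$
where $Z=J_0$ for $n=2$ and $Z=j_0$ for $n=3$. Since $F(k)=A(k)$, it suffices to show that $A$ is nonincreasing on $[k,\infty)$. Then $|F(\xi)|\le A(k)=F(k)$, which gives $|\widehat{\K}_k(j)|\le\mu$.

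For $n=2$, the Lommel integral gives $A(a)=\frac{R^2}{2}\big[J_0^2(aR)+J_1^2(aR)\big]$. Using $J_0'=-J_1$ and $J_1'=J_0-J_1/x$, we get
$$\frac{d}{dx}\big(J_0^2+J_1^2\big)=-\frac{2J_1^2(x)}{x}\le 0,$$
so $A$ is decreasing on all of $(0,\infty)$. In this case the assumption $kR\ge1$ is not even needed.

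For $n=3$, with $j_0(t)=\sin t/t$ and $x=aR$,
$$A(a)=a^{-3}\int_0^{aR}\sin^2 t\,dt=R^3 q(x),\qquad q(x)=\frac{1}{2x^2}-\frac{\sin 2x}{4x^3}.$$
Differentiating gives
$$q'(x)=x^{-4}\Big[-x\Big(1+\tfrac12\cos 2x\Big)+\tfrac34\sin 2x\Big].$$
Since $1+\frac12\cos2x\ge\frac12$, the bracket is at most $-\frac{x}{2}+\frac34<0$ for $x>3/2$.

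The main obstacle is the remaining range $1\le x\le 3/2$, and this is where $kR\ge1$ enters. There I would bound the bracket directly. On this interval $\cos2x\le\cos2<0$ is increasing in $x$ and $\sin2x$ is decreasing, so crude endpoint estimates on a few subintervals should suffice. For example, at $x=1$ the bracket is $-(1-0.208)+0.682<0$, and the margin stays negative up to $3/2$. Once $q$ is shown to be decreasing on $[1,\infty)$, the Cauchy--Schwarz argument closes the case $n=3$ in the same way as $n=2$.
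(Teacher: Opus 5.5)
Your proposal is correct and is essentially the paper's proof. You use the same radial-integral representation of $\widehat{\K}_k(j)$, Cauchy--Schwarz against the diagonal integral, the Lommel-type closed forms, and monotonicity of $J_0^2+J_1^2$ in 2D and of your $q$ in 3D; $q$ is exactly half the paper's $w=j_0^2+j_1^2-j_0j_1/t$.

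The only divergence is the 3D monotonicity check. The paper uses $t\ge1$ and completes the square in $j_0,j_1$, while you split at $x=3/2$ and verify $[1,3/2]$ by endpoint bounds. That works with a few subintervals (e.g.\ $[1,1.1],[1.1,1.25],[1.25,1.5]$), but note that $\cos 2x$ is \emph{decreasing} there, not increasing, so the lower bound on $1+\tfrac12\cos 2x$ must be taken at the right endpoint. More cleanly, your bracket $b(x)=-x(1+\tfrac12\cos 2x)+\tfrac34\sin 2x$ satisfies $b(0)=0$ and $b'(x)=2\sin x\,(x\cos x-\sin x)<0$ on $(0,\pi)$. Together with your bound for $x>3/2$, this shows $q$ is decreasing on all of $(0,\infty)$, so $kR\ge1$ is not actually needed.
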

\begin{proof}
    For $n=2$, since
    \begin{equation}
        \label{eq:term1}
        \int_0^{t} z J^2_0 (z)dz = \dfrac{1}{2}t^2\left[J_0^2(t)+J_1^2(t)\right]\qquad \text{for any } t>0,
    \end{equation}
we have
\begin{equation}
        \notag
        \int_0^{R} r J^2_0 (kr)dr = \dfrac{R^2}{2}\left[J_0^2(kR)+J_1^2(kR)\right]=
        \dfrac{4R}{\pi}\mu.
    \end{equation}
    Let $j\in \Z^2$ be such that $\pi |j|\neq kR$. Since
    \begin{align}
    \notag
    \int_0^R r J_0 (kr)J_0 \left(\frac{\pi|j|}{R}r\right)dr &= R\frac{kJ_{1}(kR)J_0 \left(\pi|j|\right)-\frac{\pi|j|}{R}J_0(kR)J_{1}\left(\pi|j|\right)}{k^2-\left(\frac{\pi|j|}{R}\right)^2} \\ \notag
    & = \dfrac{ R^2}{\pi^2 |j|^2- k^2R^2}\left[\pi |j| J_0(kR)J_{1}(\pi|j|) -  kR J_{1}(kR)J_0 \left(\pi|j|\right)\right],
\end{align}
we can rewrite
\begin{equation}\label{eq:term2}
    \widehat{\K}_k(j)=\dfrac{\pi}{4R}\int_0^R r J_0 (kr)J_0 \left(\frac{\pi|j|}{R}r\right)dr .
\end{equation}
By the Cauchy-Schwarz inequality and \eqref{eq:term1}, we have
$$| \widehat{\K}_k(j)|\le \dfrac{\pi}{4R}\sqrt{\int_0^R rJ_0^2(kr)dr}\sqrt{\int_0^R rJ_0^2\left(\frac{\pi|j|}{R}r\right)dr}= \dfrac{\pi}{4R}\sqrt{\dfrac{4R}{\pi}\mu}\sqrt{\int_0^R rJ_0^2\left(\frac{\pi|j|}{R}r\right)dr}.$$
Employing the relation \eqref{eq:term1} yields
$$\int_0^R rJ_0^2\left(\frac{\pi|j|}{R}r\right)dr = \dfrac{R^2}{2}\left[J_0^2(\pi|j|)+J_1^2(\pi|j|)\right].$$
Consider $v(t)=J_0^2(t)+J_1^2(t)$ for $t>0$. This function is decreasing since
$$v'(t)=\dfrac{-2J_1^2(t)}{t}<0,\quad\text{ for } t>0.$$
Therefore,
$$\int_0^R rJ_0^2\left(\frac{\pi|j|}{R}r\right)dr\le \dfrac{R^2}{2}\left[J_0^2(kR)+J_1^2(kR)\right]= \dfrac{4R}{\pi}\mu,$$
for all $j\in \Z^2, \pi|j|> kR$. This implies that
$| \widehat{\K}_k(j)|\le \mu,$
for all $j\in \Z^2$ such that $\pi|j|\ge kR$.

For $n=3$, since $j_{-1}(z) = \dfrac{j_0(z)}{z}-j_1(z)$ and
\begin{equation}
    \label{eq:term2-3D}
    \int_0^{t} z^2j_0^2(z)dz=\dfrac{t^3}{2}\left[j_0^2(t)-j_{-1}(t)j_{1}(t)\right] \qquad \text{for any } t>0,
\end{equation}
we have 
$$\int_0^{kR} z^2j_0^2(z)dz=\dfrac{(kR)^3}{2}(j_0^2(kR)-j_{-1}(kR)j_{1}(kR))=\dfrac{(kR)^2}{2}\left\{ kR \left[j_0^2(kR)+j_1^2(kR)\right]-j_0(kR)j_1(kR)\right\}.$$
Then
\begin{equation}\notag
    \int_0^{R} r^2j_0^2(kr)dr=\dfrac{R^2}{2k}\left\{ kR \left[j_0^2(kR)+j_1^2(kR)\right]-j_0(kR)j_1(kR)\right\}=\dfrac{R^2}{2k}\dfrac{2}{R^2}(2R)^{3/2} \mu=\dfrac{(2R)^{3/2}}{k}\mu.
\end{equation}
Let $j\in \Z^3$ such that $\pi |j|\neq kR$. Since
    \begin{align}
    \notag
    \int_0^R r^2 j_0(kr)j_0\left(\dfrac{\pi|j|}{R}r\right)dr &= \dfrac{R^4}{\pi^2|j|^2-k^2R^2}\left[\dfrac{\pi|j|}{R}j_0(kR)j_{1}\left(\pi|j|\right)-kj_{1}(kR)j_0\left(\pi|j|\right)\right] \\ \notag
    & = \dfrac{ R^3}{\pi^2 |j|^2- k^2R^2}\left[\pi|j|j_0(kR)j_{1}\left(\pi|j|\right)-kRj_{1}(kR)j_0\left(\pi|j|\right)\right],
\end{align}
we have
$$\widehat{\K}_k(j)=k(2R)^{-3/2}\int_0^R r^2 j_0(kr)j_0\left(\dfrac{\pi|j|}{R}r\right)dr.$$
By the Cauchy-Schwarz inequality and \eqref{eq:term2-3D}, we obtain
\begin{align}
    \notag
    |\widehat{\K}_k(j)|&\le k(2R)^{-3/2}\sqrt{\int_0^R r^2 j_0^2(kr)dr}\sqrt{\int_0^R r^2 j_0^2\left(\dfrac{\pi|j|}{R}r\right)dr}\\ \notag
    &= k(2R)^{-3/2}\sqrt{\dfrac{(2R)^{3/2}}{k}\mu}\sqrt{\int_0^R r^2 j_0^2\left(\dfrac{\pi|j|}{R}r\right)dr}.
\end{align}
Adopting \eqref{eq:term2-3D}, we have
$$\int_0^R r^2 j_0^2\left(\dfrac{\pi|j|}{R}r\right)dr=\dfrac{R^3}{2\pi|j|}\left\{\pi|j|\left[j_0^2(\pi|j|)+j_1^2(\pi|j|)\right]-j_0(\pi|j|)j_1(\pi|j|)\right\}.$$
Consider $w(t) =  j_0^2(t)+j_1^2(t)-j_0(t)j_1(t)/t$ for $t\ge1$.
\begin{align}
\notag
 w'(t)&= \dfrac{-3tj_1^2(t)-tj_0^2(t)+3j_0(t)j_1(t)}{t^2}\le \dfrac{-3j_1^2(t)-j_0^2(t)+3j_0(t)j_1(t)}{t^2}\\ \notag
 &=\dfrac{1}{t^2}\left[-\left(j_0(t)-\dfrac{3}{2}j_1(t) \right)^2-\dfrac{3}{4}j_1^2(t) \right]\le 0.
\end{align}
This shows that $w$ is decreasing, so
$$\int_0^R r^2 j_0^2\left(\dfrac{\pi|j|}{R}r\right)dr\le \dfrac{R^3}{2 kR}\left\{kR\left[j_0^2(kR)+j_1^2(kR)\right]-j_0(kR)j_1(kR)\right\}=\dfrac{(2R)^{3/2}}{k}\mu,$$
for all $j\in \Z^3, \pi|j|> kR\ge 1$.
Therefore,
$|\widehat{\K}_k(j)|\le \mu,$
for all $j\in \Z^3$ such that $\pi|j|\ge kR\ge 1$.
\end{proof}

Theorems \ref{thm:Fcoef} and \ref{thm:ring} show that 
$|\widehat{\K}_k|$ decays beyond the ring $|j|= kR/\pi$.
 We can observe these characteristics in the Figure \ref{fig:kernel-hat}.

\begin{figure}[H]
    \centering
    \begin{subfigure}[b]{0.3\textwidth}
        \includegraphics[width=\linewidth]{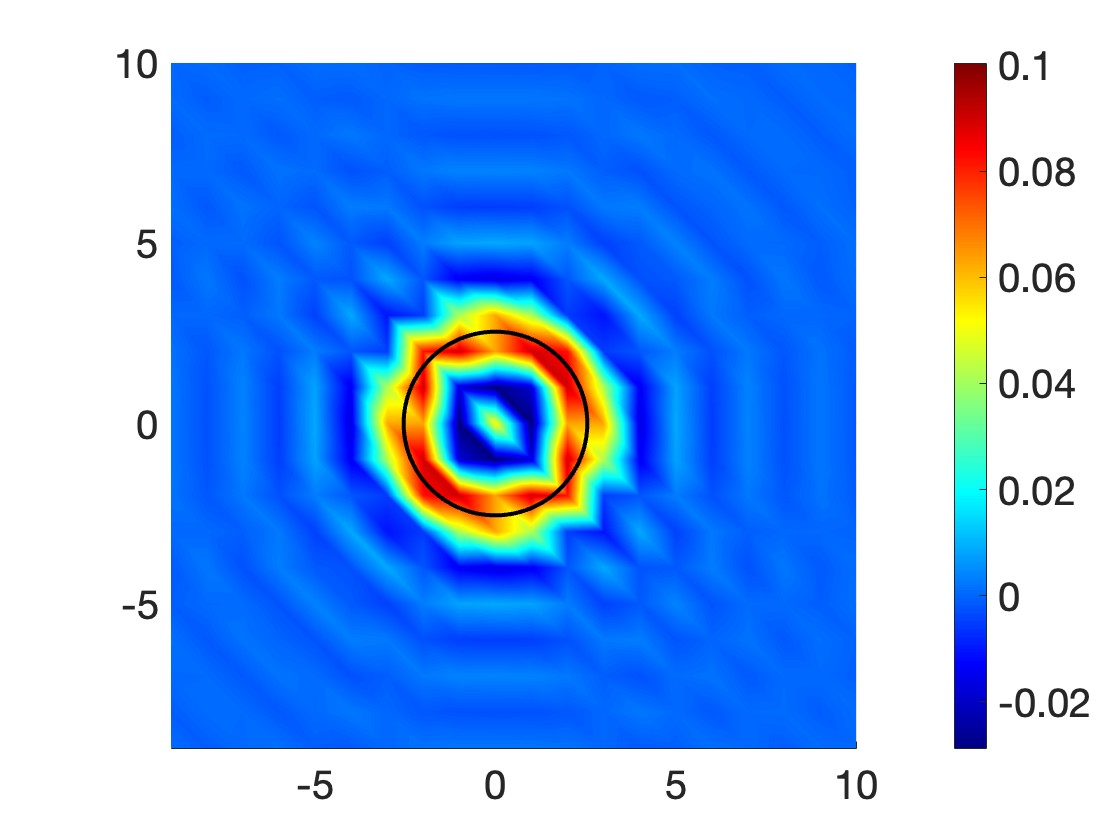}
        \caption{$k=4$}
    \end{subfigure}
    \begin{subfigure}[b]{0.3\textwidth}
            \includegraphics[width=1.0\linewidth]{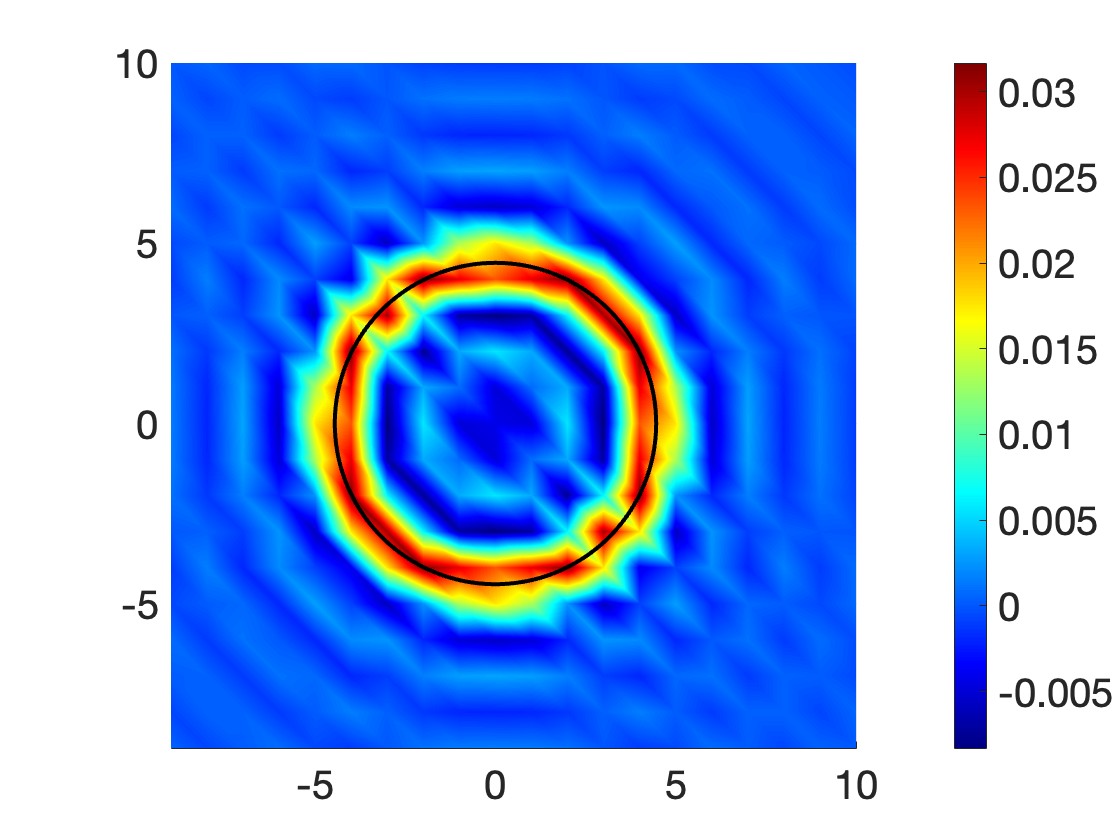}
            \caption{$k=7$}
        \end{subfigure}
    \begin{subfigure}[b]{0.3\textwidth}
        \includegraphics[width=1.0\linewidth]{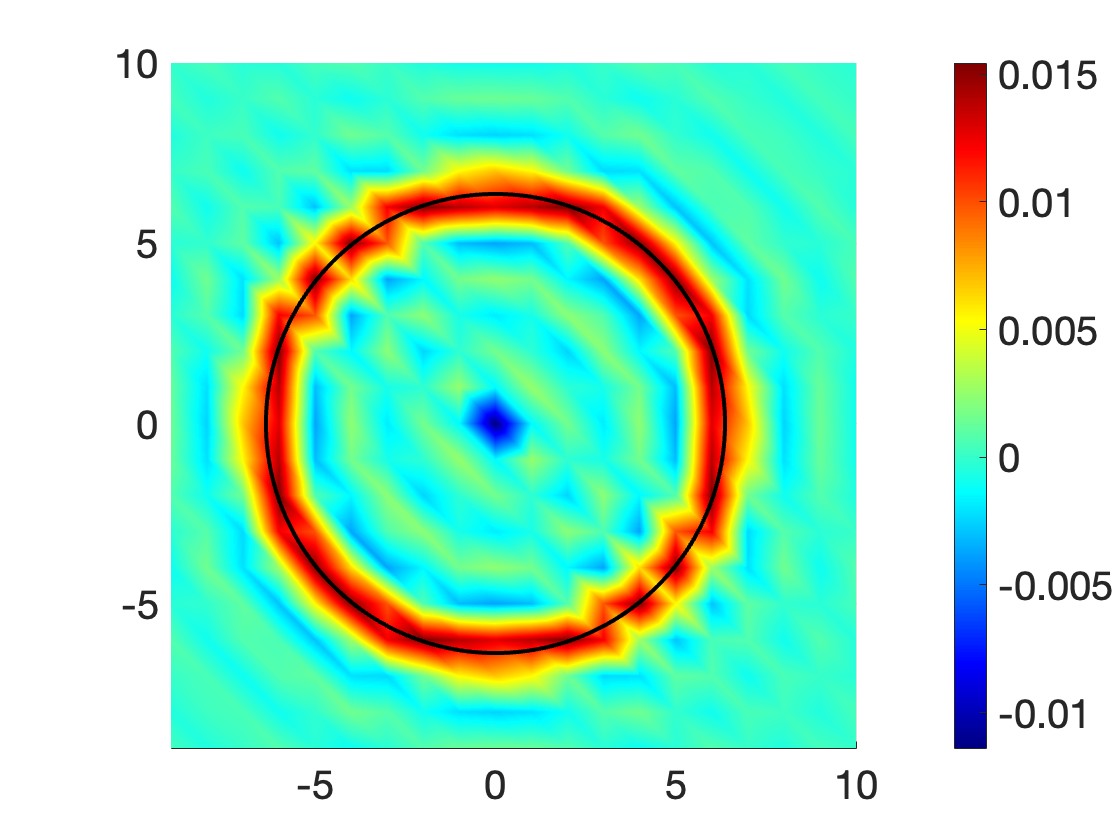}
        \caption{$k=10$}
    \end{subfigure}
    \caption{$|\widehat{\K}_{k}|$ with the circle $|j|=kR/\pi$ (black) for $n=2$
    }\label{fig:kernel-hat}
\end{figure}
This enables a substantial reduction in the number of Fourier coefficients required by the model, since the right-hand side of the model equation \eqref{eq:modeleq} can be  approximated using only the dominant coefficients. In particular, we truncate the series to the range 
$j \in [-J+1,J]^n \subset \Z^n$, 
where $J$ is taken larger but not too far from $kR/\pi$.

\section{Model-informed neural network} \label{section4}

In this section, we will introduce the ideas behind our model-informed neural network. The underlying principle of our model-informed neural network is the optimization of a parametric model based on the model equation \ref{eq:modeleq}. Hence, we will define our hypothesis space, $\mathcal{H}$, and furthermore, formalize our optimization process in this section. 

Our hypothesis space to approximate the Fourier coefficients of the source function, $\widehat{f}_{\text{per}}$, will be the class of fully-connected feedforward neural networks. In our optimization process, we make the following assumptions:
\begin{itemize}
    \item There exist a model $\widehat{f}_{\Theta}$ in the hypothesis space $\mathcal{H}$ for which the loss function $\mathcal{L}$ \eqref{eq:loss function} is small. 
    \item Given that there exists a model $\widehat{f}_{\Theta}$ with a small error   $\mathcal{L}$, the corresponding relative error $\mathcal{E}$ \eqref{eq:total error} is small as well. 
\end{itemize} 

Our eventual goal is to find a model $\widehat{f}_{\Theta} \in \mathcal{H}$ such that $\widehat{f}_{\Theta}  \approx \widehat{f}_{\text{per}}$.  As mentioned above, together with the chosen hypothesis space $\mathcal{H}$, our model equation $\eqref{eq:modeleq}$  is used as the basis for the loss function
\begin{align}\label{eq:loss function}
    \mathcal{L}: \mathcal{H} & \rightarrow \R \\ \notag
    \widehat{f}_{\Theta} & \mapsto  \frac{1}{M}\sum_{m=1}^{M} \left| \mathcal{I}_{k_{m}}(z) - (2R)^{n/2}\mathcal{F}^{-1} \left[\widehat{\K}_{k_{m}}(j)\widehat{f}_{\Theta}(j) \right](z)\right|^{2}, \quad j\in \mathbb{Z}^n, \, z\in B_\rho,
\end{align}
where $M$ denotes the number of wave numbers. $\mathcal{L}$ motivates the formulation of our optimization process 
\begin{align*}
    \min_{\widehat{f}_{\Theta} \in \mathcal{H}} \mathcal{L}[\widehat{f}_{\Theta}] := \min_{\widehat{f}_{\Theta} \in \mathcal{H}}  \left( \frac{1}{M}\sum_{m=1}^{M} \left| \mathcal{I}_{k_{m}}(z) - (2R)^{n/2}\mathcal{F}^{-1} \left[\widehat{\K}_{k_{m}}(j)\widehat{f}_{\Theta}(j) \right](z)\right|^{2} \right),  \quad j\in \mathbb{Z}^n, \, z\in B_\rho.
\end{align*}
This overall training process for $n=3$ is shown below. 

\begin{figure}[h!]
    \centering
    \includegraphics[scale = 0.4]{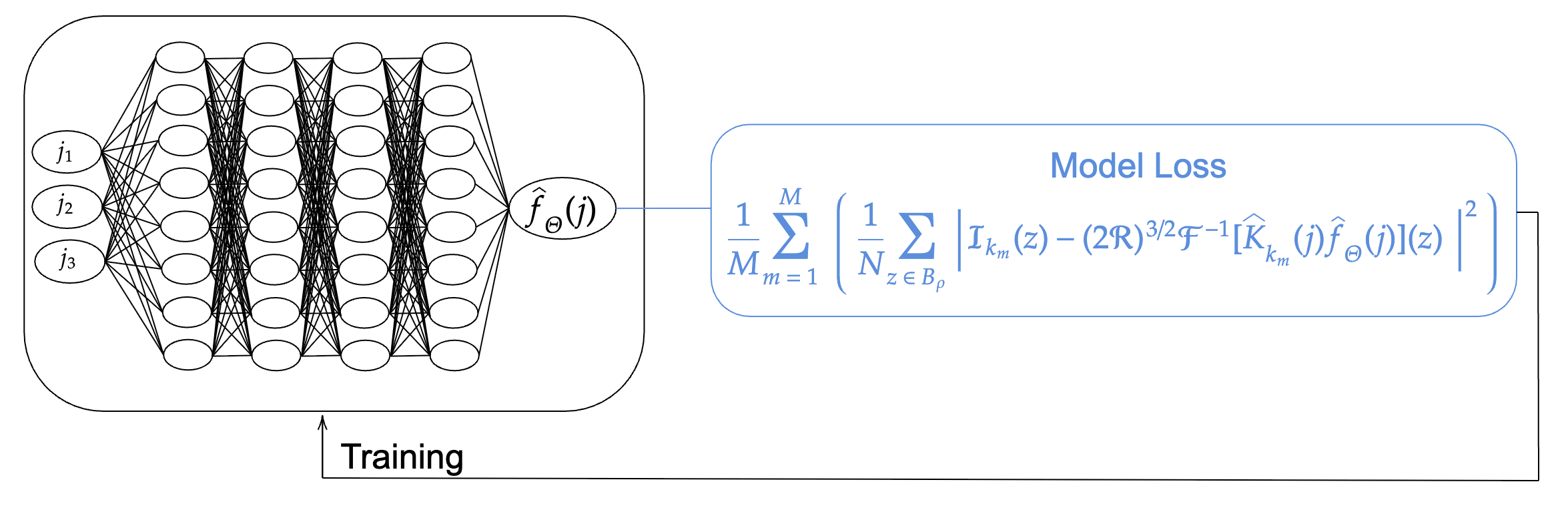}
    \caption{Neural network model in $3$D}
    \label{fig:Model for Inverse Source}
\end{figure} 

\paragraph{Choice of hyperparameters:} As the neural network aims to approximate the Fourier coefficients of the unknown source $f$, the input of the neural network is the set of quadrature points $$\{ (j_{1},j_{2}) \in  \mathbb{Z}^{2}: j_{\ell} = -6,-5, ..., 6,7,  \ell=1,2\} $$  for $n=2$ and $$ \{ (j_{1},j_{2},j_{3}) \in \mathbb{Z}^{3}: j_{\ell} = -5,-4, ..., 5,6,  \ell=1,2,3\},$$ for $n=3$ . The output of our network will be complex-valued numbers. Hence, the hypothesis space is made by fully connected feedforward neural networks with $n$ input neurons and $2$ output neurons. The depth and width of the neural network are $4$ and $64$ respectively with the $\tanh$ as the activation function in the hidden layers. The optimization process will be initialized by the Glorot normal initializer and will be performed by the Adams optimizer. The learning rate is $0.001$ and training is done for $50,000$ iterations. 

\begin{algorithm}[H]
\caption{Model-informed Neural Network Algorithm}\label{alg:algorithm}
\label{alg:Model_Informed_Neural_Network_Algorithm}

\SetKwInOut{Input}{Input}

\Input{$N$: number of collocation points in $B_\rho$ \\
$M$:  number of wave numbers\\
\hspace*{0.3em}$\tau$: learning rate}

\BlankLine
Initialize the network architecture $\widehat{f}_{\Theta}$ with the hyperparameters given above.  \\ 
\While{not converged}{

\For{$m=1,\dots,M$}{ Compute
\[
\mathrm{MSE}_{k_{m}}(\widehat{f}_{\Theta})
= \frac{1}{N} \sum_{i=1}^{N} \left| \mathcal{I}_{k_{m}}(z_{i}) - (2R)^{n/2}\mathcal{F}^{-1} \left[\widehat{\K}_{k_{m}}(j)\widehat{f}_{\Theta}(j) \right](z_{i})\right|^{2}.
\]
}\
Compute
\[
\mathrm{MSE}(\widehat{f}_{\Theta})
= \frac{1}{M}\sum_{m=1}^{M}
\mathrm{MSE}_{k_{m}}(\widehat{f}_{\Theta});
\]

Update $\Theta \leftarrow \Theta - \tau\nabla_{\Theta}\mathrm{MSE}(\widehat{f}_{\Theta})$\
}
\SetKwInOut{Output}{Output}
\Output{$\widehat{f}^{\ast}_{\Theta}$}
\end{algorithm}

    The convergence of the algorithm is assessed through the stabilization of the relative error. In our numerical examples, the relative error stabilizes after $50,000$ iterations (see Figure \ref{fig:Update process and Loss curve (2D) - Kite}). 

    Furthermore, with the Fourier coefficients $\widehat{f}^{\ast}_{\Theta}$ obtained from the neural network, we can reconstruct the (real-valued) source $f$ using 
    $$f_{\text{comp}}=\mathrm{Re} \left[\sum_{j\in [-J+1,J]^n}\widehat{f}^{\ast}_{\Theta}(j) \varphi_j \right].$$

\section{Numerical results}\label{section5}
In this section, we demonstrate the ability of our algorithm to reconstruct the various coefficients of interest from boundary field data for both cases $2$D  and $3$D. For each case, we have $10$ evenly spaced wave numbers between $2$ and $10$. For $n=2$, $\partial \Omega$ is the circle of radius $100$ centered at $0$, uniformly discretized into $96$ points. For $n=3$, $\partial \Omega$ is the sphere of radius $100$ centered at $0$, uniformly discretized into $24$ azimuthal and polar points each. The boundary data is obtained by computing the volume potential \eqref{eq:Volume}. To simulate noisy data, we add $10\%$ noise to the boundary  data using the following formula   
\[u_{\delta} = u+ 0.1 \times \frac{\mathcal{N}}{\left\Vert\mathcal{N}\right\Vert_F} \left\Vert{u}\right\Vert_F,\]
where $\mathcal{N}$ is  the noise matrix consisting of random entries $a + bi$ for $a, b \in (-1, 1)$, drawn from an uniform distribution and $|| \cdot ||_{F}$ denotes the Frobenius norm.

The sampling domain for the $\Omega$ is the $[-2,2]^{n}$ grid, uniformly discretized into $64$ sampling points in each direction. We introduce the following metric as a quantitative measure of our algorithm:
\begin{align}\label{eq:total error}
    \mathcal{E}_{\text{Fourier}} \left[ \widehat{f}_{\Theta}^{\ast}\right] & = \frac{  \left\| \widehat{f}_{\Theta}^{\ast}(j)  - \widehat{f}_{\text{per}}(j)
    \right\|_{F}}{\left\|\widehat{f}_{\text{per}}(j)
    \right\|_{F}}.
\end{align}
This metric quantifies the difference between the true and predicted Fourier coefficients, thus acting as a direct assessment of the learning capacity of the algorithm. 

\subsection{$2$D experimental configuration and results} We show the reconstruction ability for the following sources: a square-shaped source (Figure \ref{fig: New Square Results}), a kite-shaped source (Figure \ref{fig: New Kite Results}), a triangle-shaped source (Figure \ref{fig: New Triangle Results}) and a square-with-hole-shaped source (Figure \ref{fig: New Rect with Hole Results}). For the square-shaped source, we explicitly computed the Fourier coefficients using the basis provided in Section \ref{section3}. For the remaining shapes, we computed the respective Fourier coefficients using the Fast Fourier Transform. Table \ref{table: new 2D table} reflects a low relative error, indicating accurate reconstructions. The algorithm is able to accurately identify the boundaries, locations, sizes, and coefficient values of the source functions.  In particular, the results are still satisfactory when noise is added to the boundary data. Additionally, the proposed algorithm also exhibits fewer artifacts in the background region compared to the traditional linear least squares method.

Table \ref{table: new 2D table} also reflects a low computation time, which arises from a low computational cost of $\mathcal{O}(MN^{2} \log N)$. By defining the algorithm in the spectral domain, we overcome the need for $N^{4}$ operations when evaluating the convolutional structure in the equation \eqref{eq:Iz}.  
The convergence behavior of the optimization process for both the kite-shaped and triangle-shaped sources is illustrated in the relative error versus iteration plot and the training loss versus iteration plot (Fig. \ref{fig:Update process and Loss curve (2D) - Kite}). The relative error is computed according to equation \eqref{eq:total error}, while the training loss is evaluated using the model loss function defined in \eqref{eq:loss function}. Both curves exhibit stabilization after approximately the 50,000th iteration, suggesting that the optimization process has reached a plateau. This behavior indicates that subsequent parameter updates become marginal, and further training iterations are unlikely to result in substantial performance improvements.

\begin{figure}[H]
    \centering
    \begin{subfigure}[b]{0.34\textwidth}
        \includegraphics[width=\linewidth]{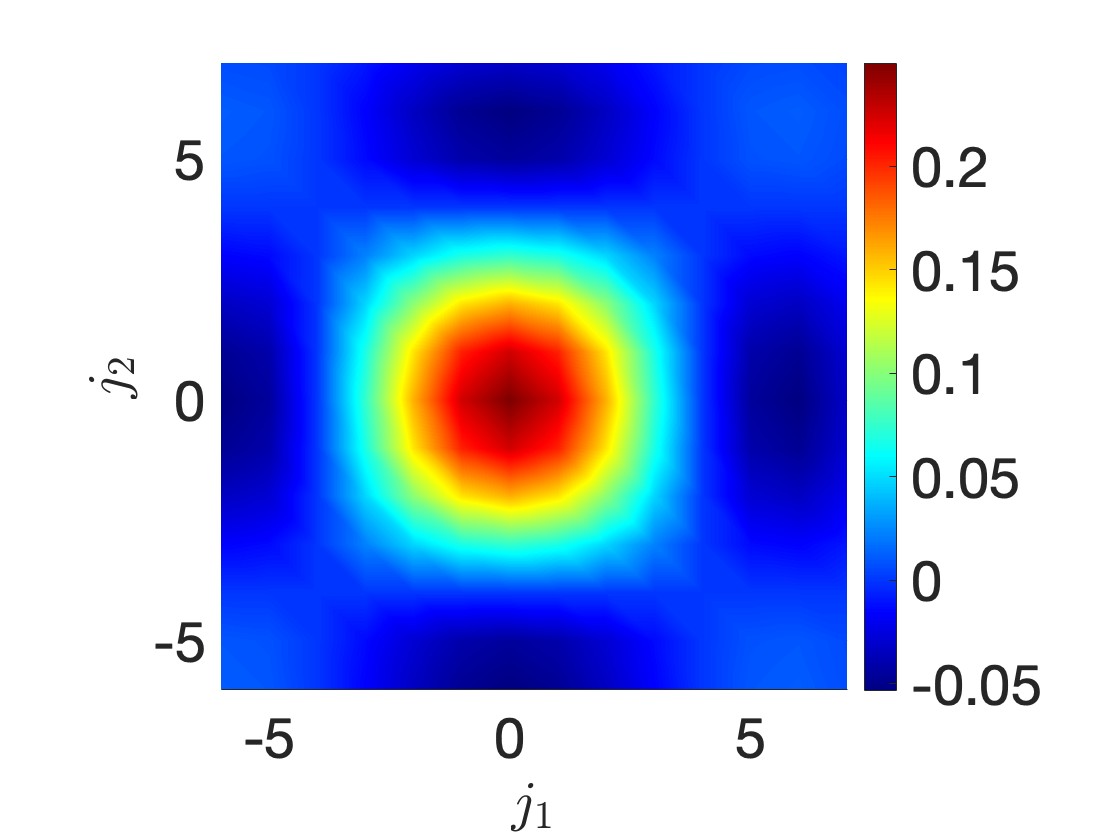}
        \caption{$\mathrm{Re}(\widehat{f}_{\text{per}})$ ($\mathrm{Im}(\widehat{f}_{\text{per}})=0)$}
        \label{trueF:square}
    \end{subfigure}
    \hspace{-0.5cm}
    \begin{subfigure}[b]{0.34\textwidth}
        \includegraphics[width=\linewidth]{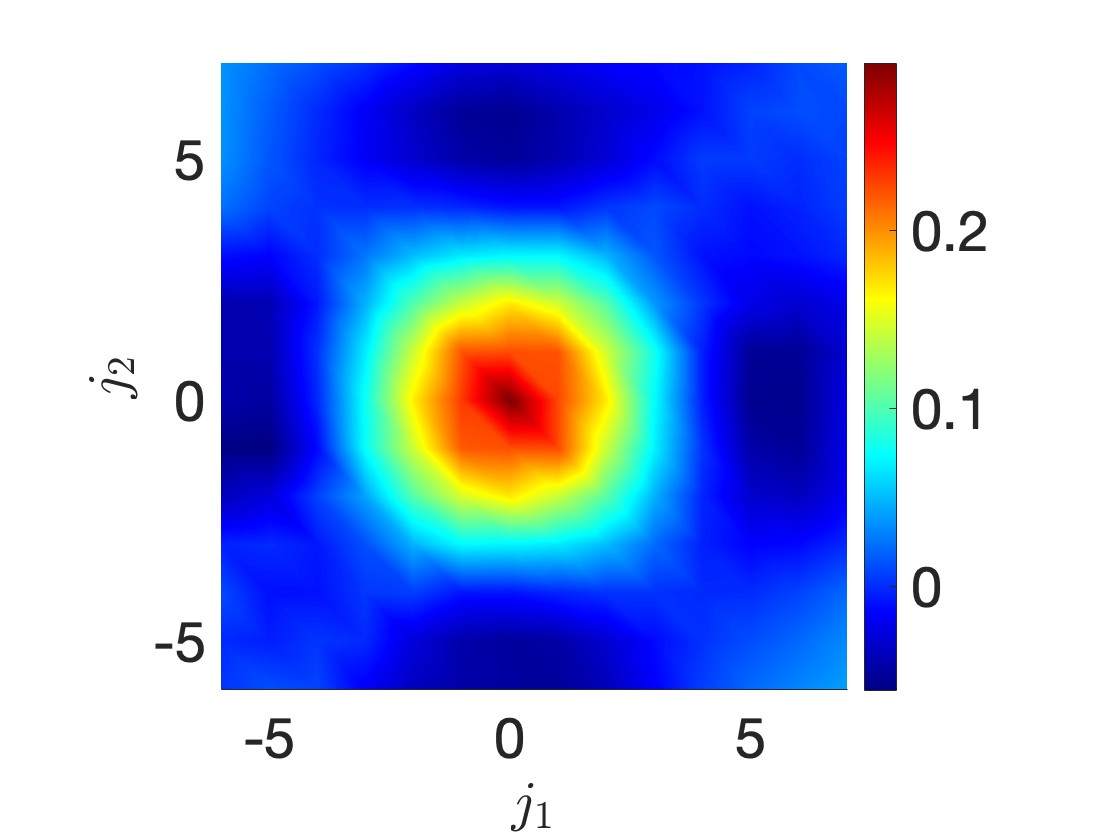}
        \caption{$\mathrm{Re}(\widehat{f}_{\Theta}^{\ast})$ 
        }
        \label{predFreal:square}
    \end{subfigure}
    \hspace{-0.5cm}
    \begin{subfigure}[b]{0.34\textwidth}
        \includegraphics[width=\linewidth]{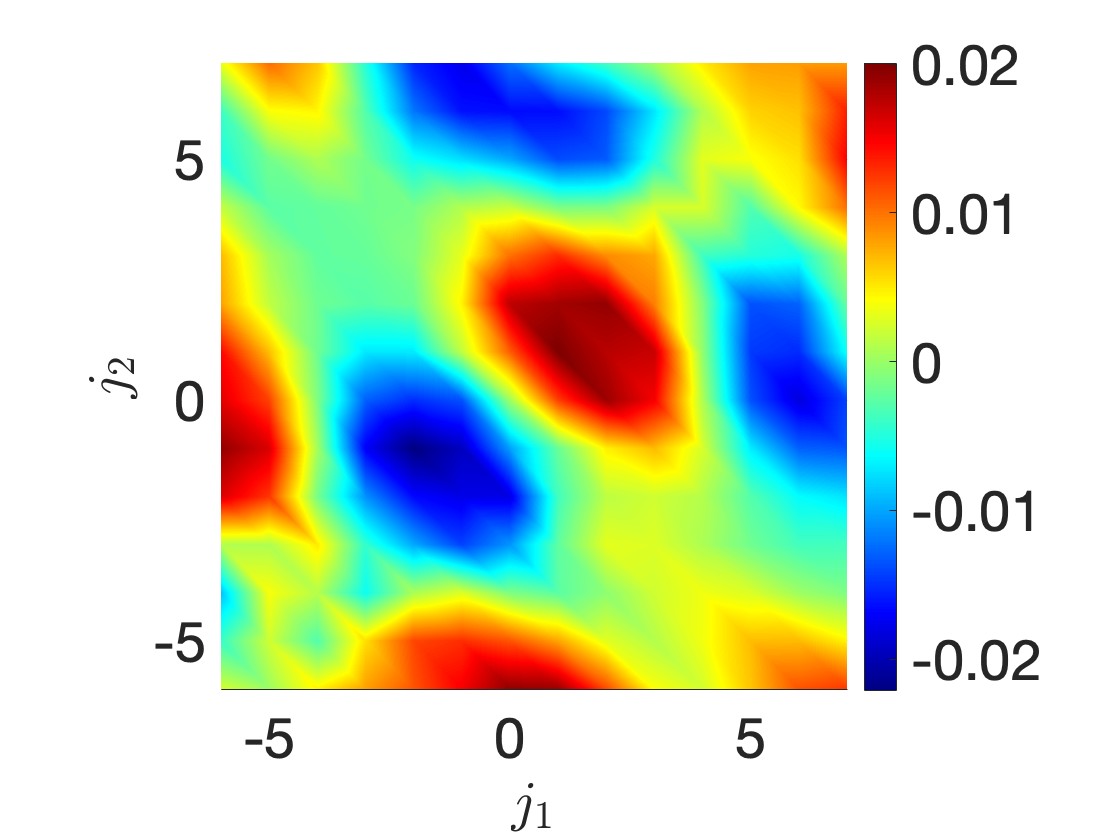}
        \caption{$\mathrm{Im}(\widehat{f}_{\Theta}^{\ast})$ 
        }
        \label{predFimag:square}
    \end{subfigure}

    \medskip

    \begin{subfigure}[b]{0.34\textwidth}
        \includegraphics[width=\linewidth]{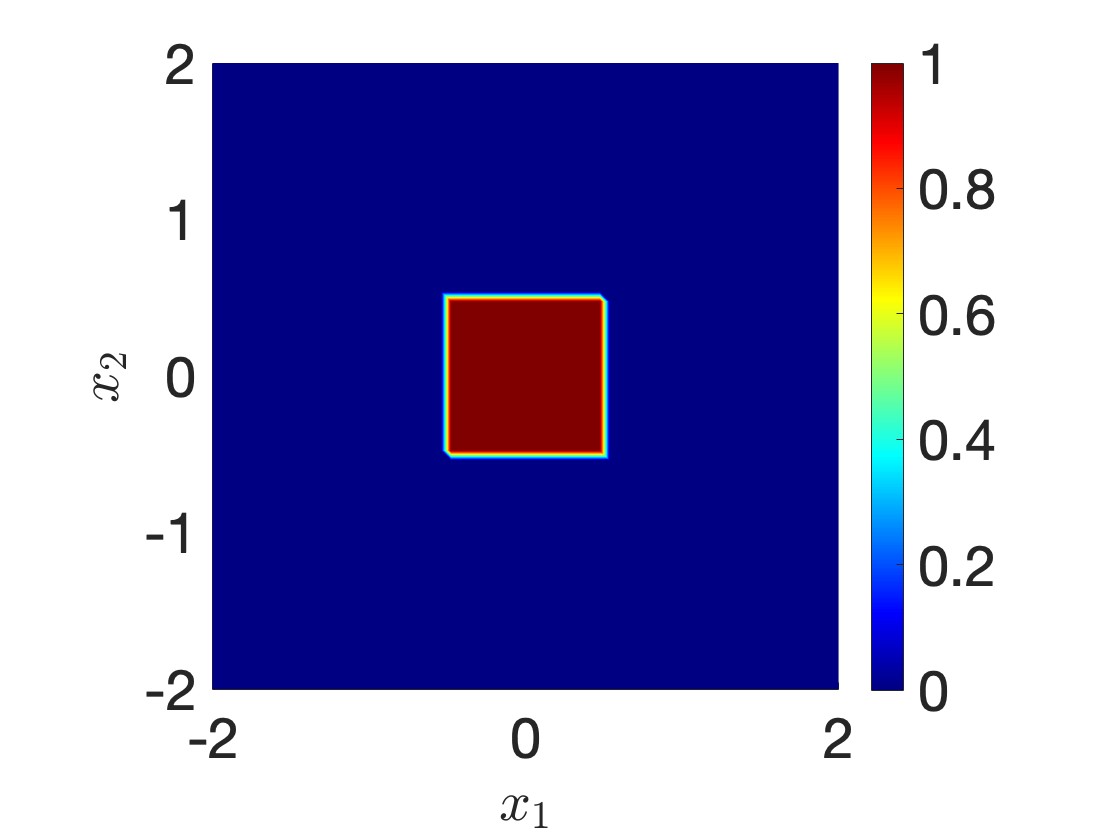}
        \caption{ $f$}
        \label{true:square}
    \end{subfigure}
    \hspace{-0.5cm}
    \begin{subfigure}[b]{0.34\textwidth}
        \includegraphics[width=\linewidth]{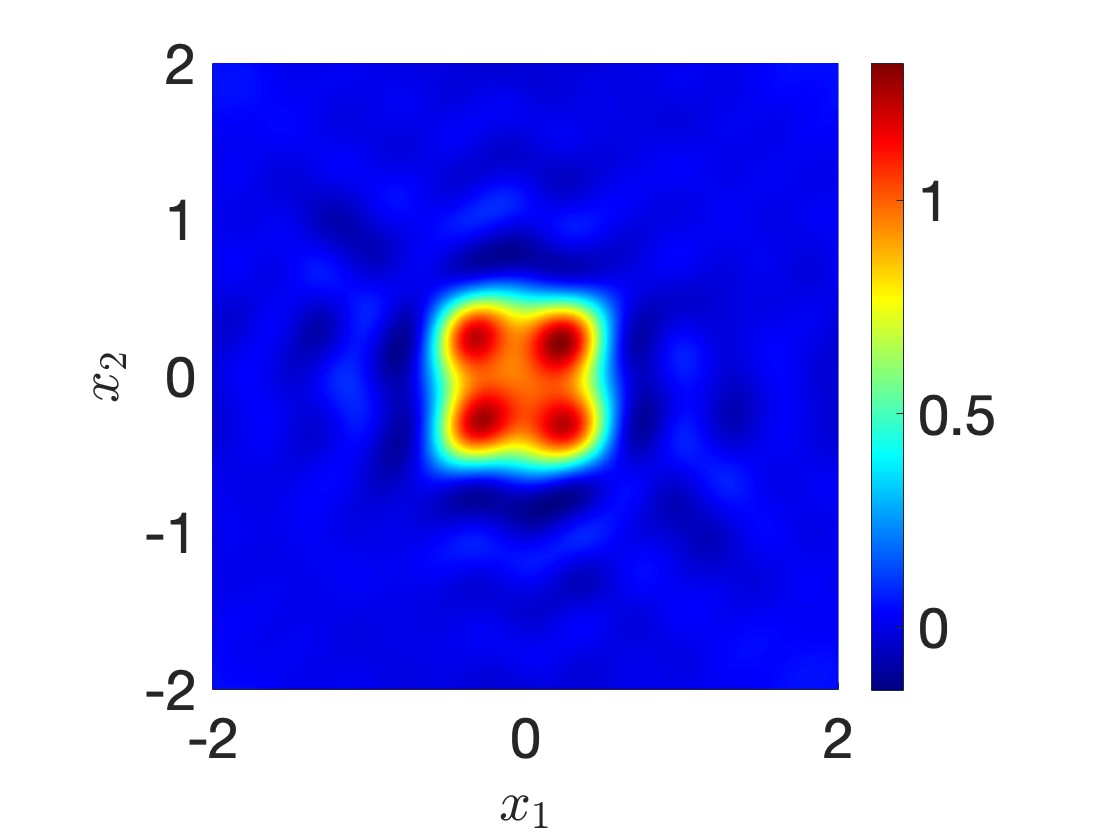}
        \caption{$f_{\text{comp}}$ 
        }
        \label{pred:square}
    \end{subfigure}
    \hspace{-0.5cm}
        \begin{subfigure}[b]{0.34\textwidth}
        \includegraphics[width=1.0\linewidth]{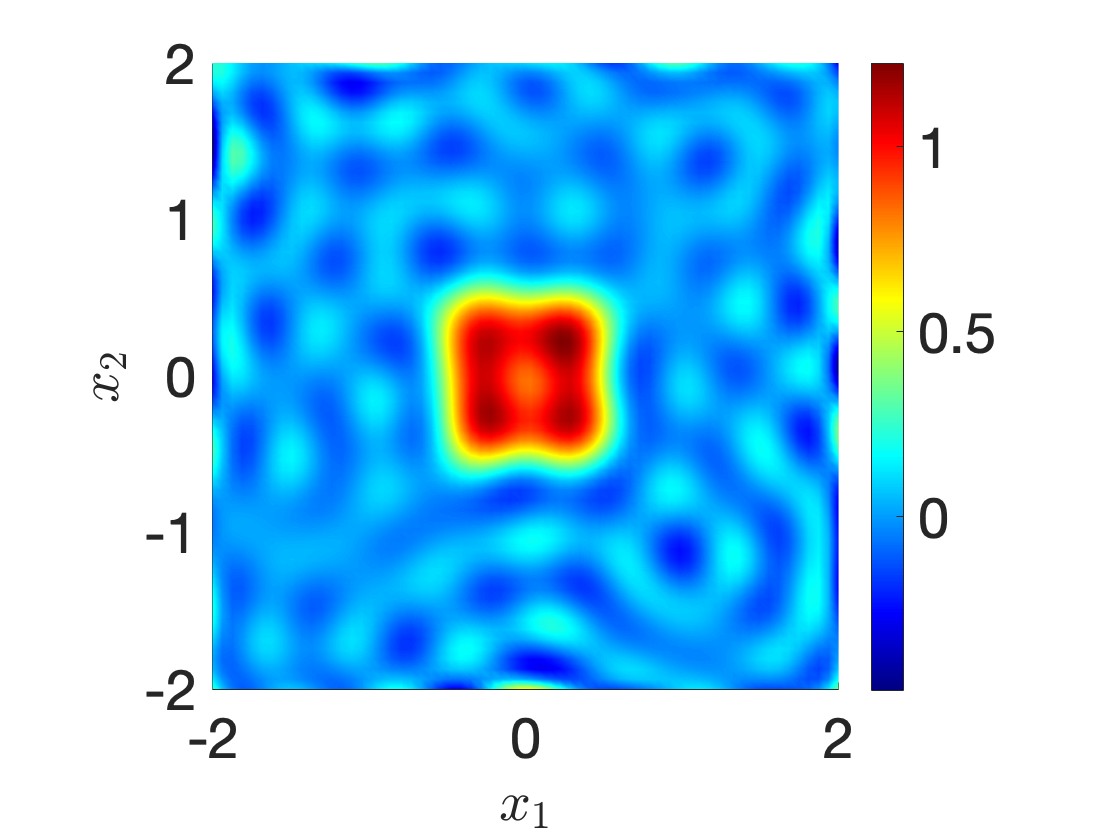}
        \caption{Linear least square}
        \label{lls:square}
    \end{subfigure}
   \caption{Reconstruction results for the square-shaped source. 
Top row: exact Fourier coefficients \eqref{trueF:square} and the real \eqref{predFreal:square} and imaginary \eqref{predFimag:square} parts predicted by the proposed method. 
Bottom row: comparison of the true source \eqref{true:square}, our reconstruction \eqref{pred:square}, and the linear least-squares reconstruction \eqref{lls:square}.}
    \label{fig: New Square Results}
\end{figure}

\begin{figure}[H]
    \centering
    \begin{subfigure}[b]{0.25\textwidth}
        \includegraphics[width=\linewidth]{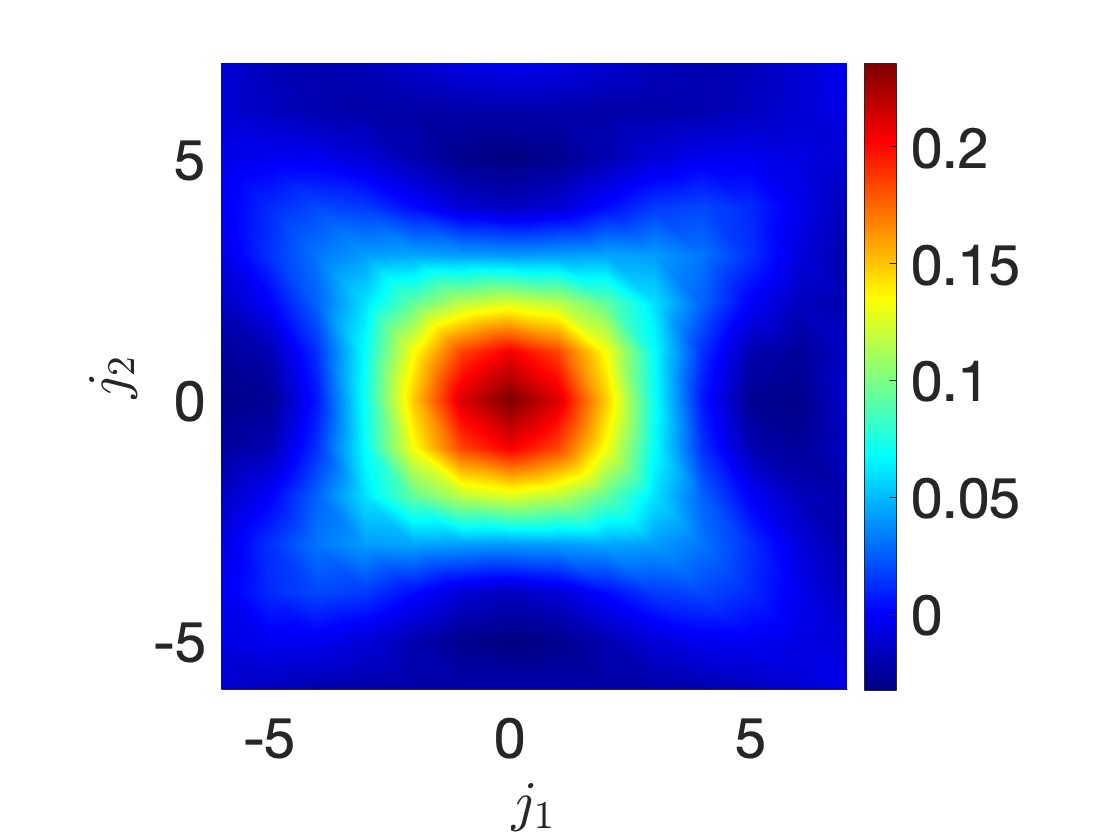}
        \caption{$\mathrm{Re}({\widehat{f}_{\text{per}}}$)}
        \label{trueFreal:kite}
    \end{subfigure}
    \hspace{-0.5cm}
    \begin{subfigure}[b]{0.25\textwidth}
        \includegraphics[width=\linewidth]{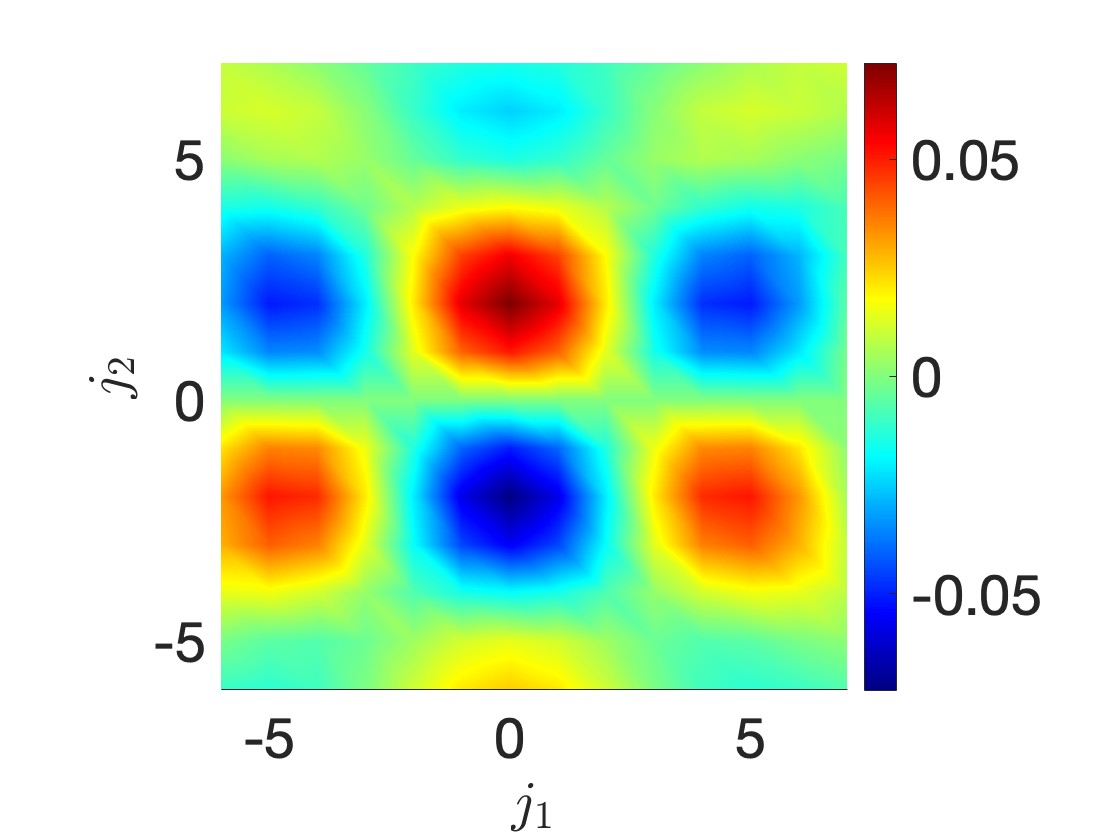}
        \caption{$\mathrm{Im}({\widehat{f}_{\text{per}}}$)}
        \label{trueFimag:kite}
    \end{subfigure}
    \hspace{-0.5cm}
    \begin{subfigure}[b]{0.25\textwidth}
        \includegraphics[width=\linewidth]{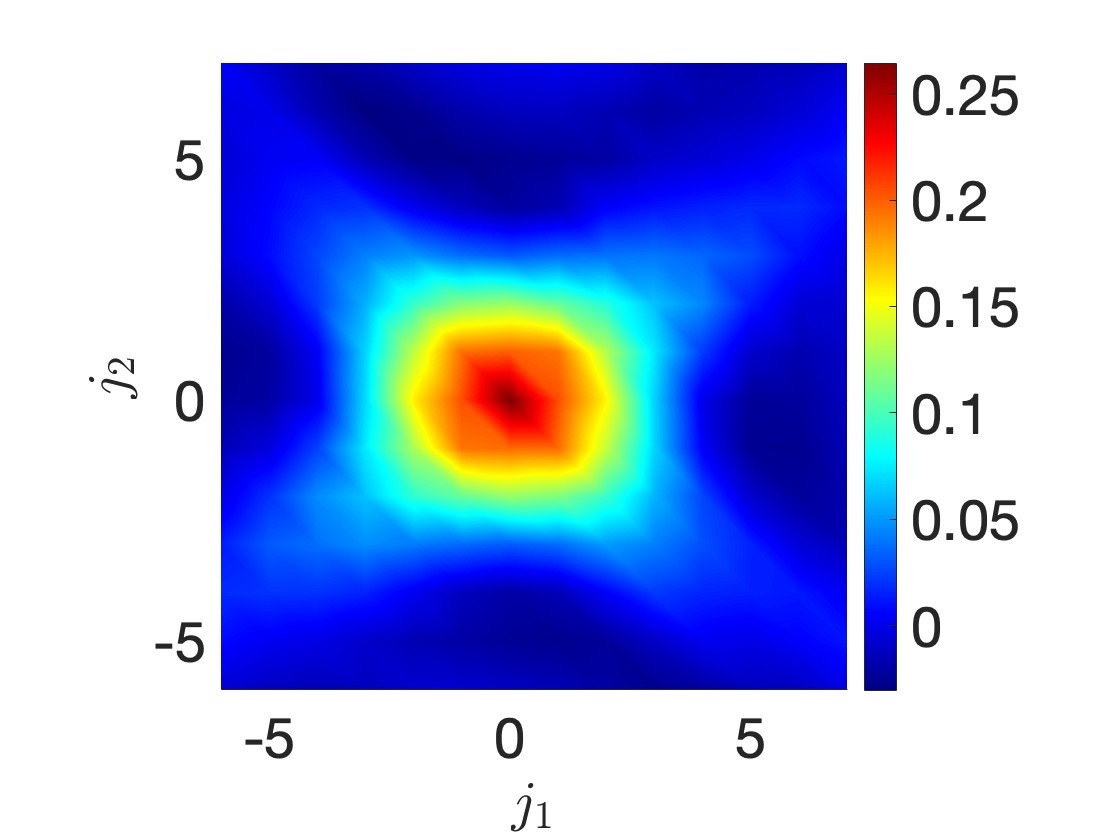}
        \caption{$\mathrm{Re}(\widehat{f}_{\Theta}^{\ast})$
        }
        \label{predFreal:kite}
    \end{subfigure}
    \hspace{-0.5cm}
    \begin{subfigure}[b]{0.25\textwidth}
        \includegraphics[width=\linewidth]{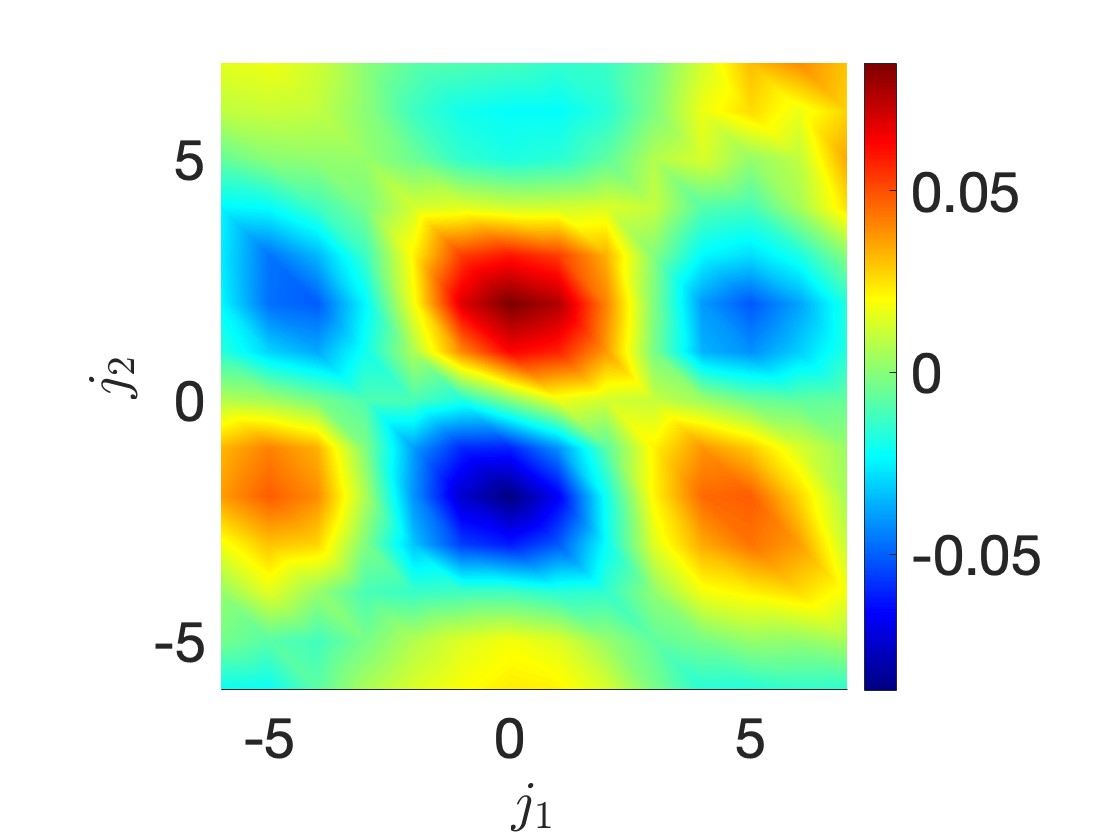}
        \caption{$\mathrm{Im}(\widehat{f}_{\Theta}^{\ast})$
        }
        \label{predFimag:kite}
    \end{subfigure}

    \medskip
    
    \begin{subfigure}[b]{0.3\textwidth}
        \includegraphics[width=\linewidth]{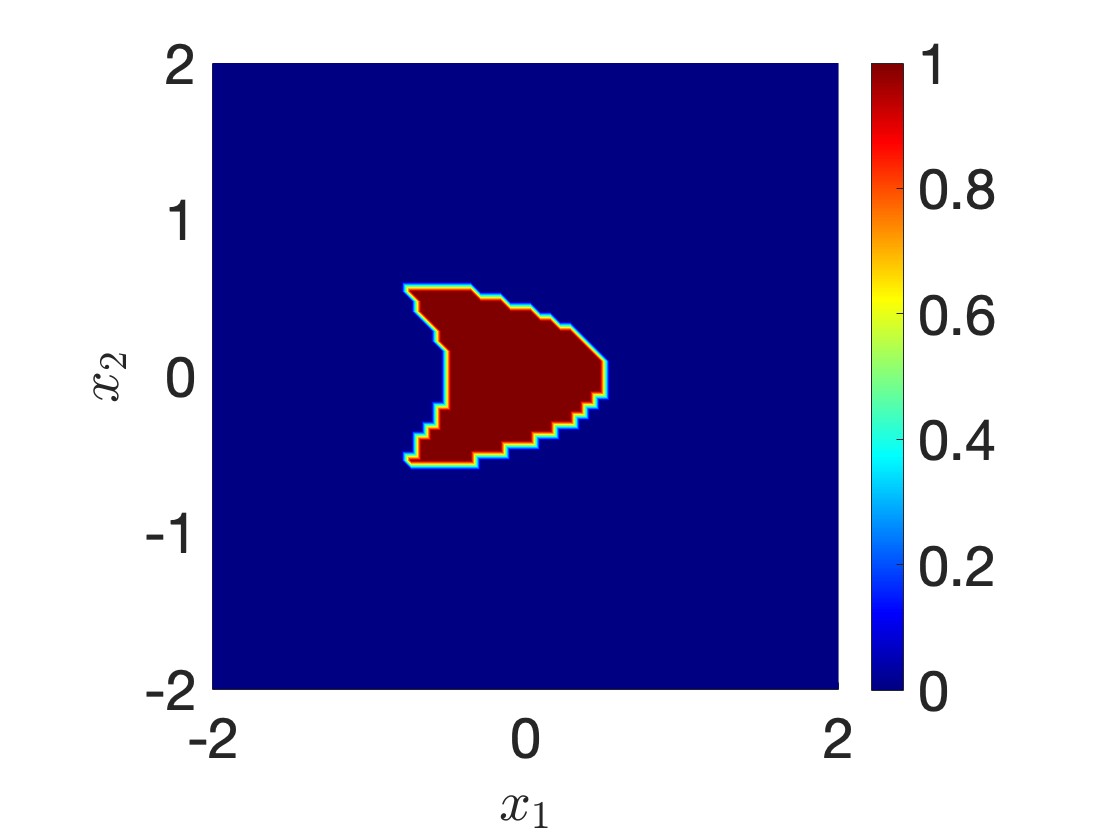}
        \caption{$f$}
        \label{true:kite}
    \end{subfigure}
    \begin{subfigure}[b]{0.3\textwidth}
        \includegraphics[width=\linewidth]{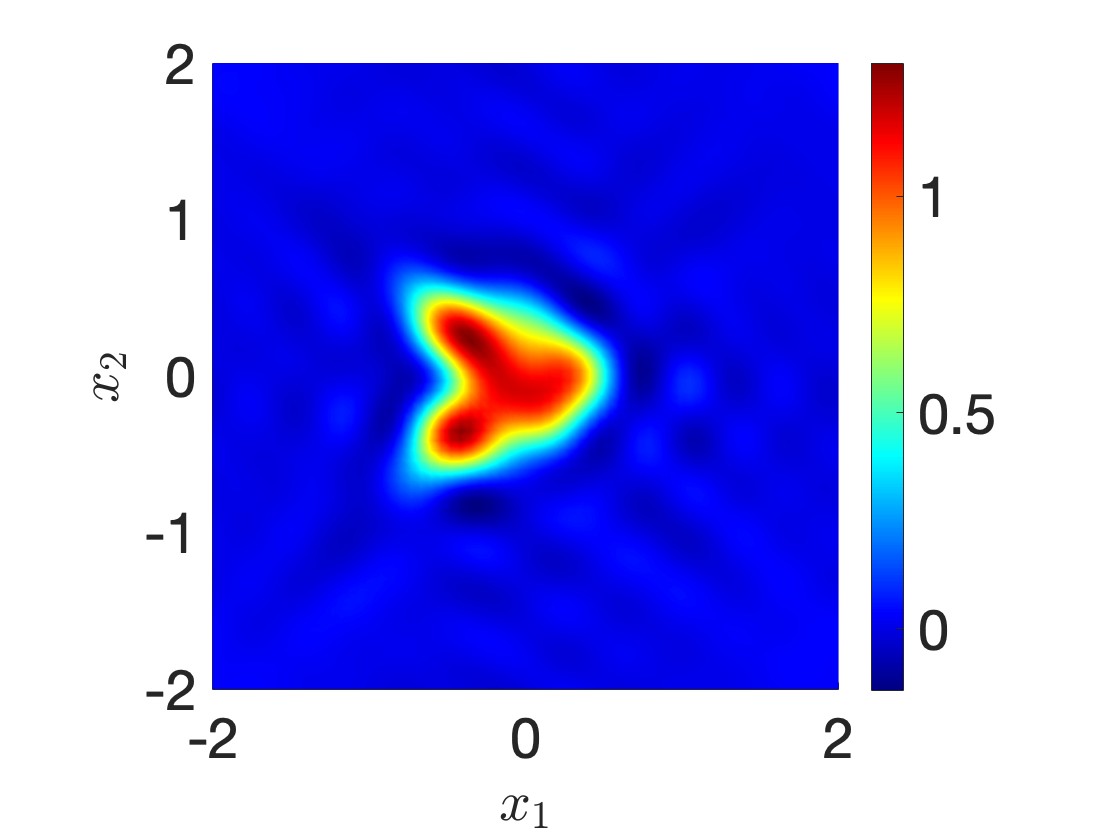}
        \caption{$f_{\text{comp}}$ 
        }
        \label{pred:kite}
    \end{subfigure}
    \begin{subfigure}[b]{0.3\textwidth}
            \includegraphics[width=1.0\linewidth]{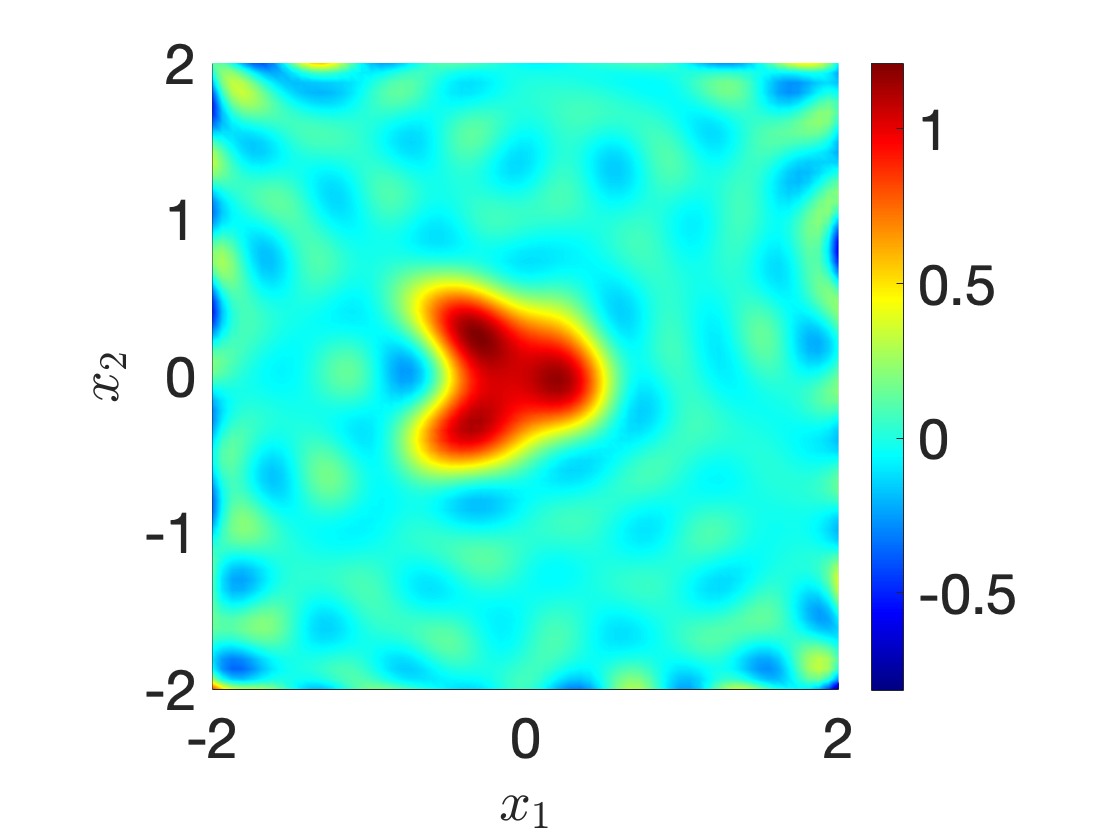}
            \caption{Linear least squares}
            \label{lls:kite}
        \end{subfigure}
\caption{Reconstruction results for the kite-shaped source. 
Top row: real \eqref{trueFreal:kite} and imaginary \eqref{trueFimag:kite} parts of the exact Fourier coefficients and their counterparts predicted by the proposed method \eqref{predFreal:kite}, \eqref{predFimag:kite}. 
Bottom row: comparison of the true source \eqref{true:kite} and the reconstructions obtained by the proposed method \eqref{pred:kite} and the linear least-squares approach \eqref{lls:kite}.}
    \label{fig: New Kite Results}
\end{figure}

\begin{figure}[H]
    \begin{subfigure}[b]{0.25\textwidth}
        \includegraphics[width=\linewidth]{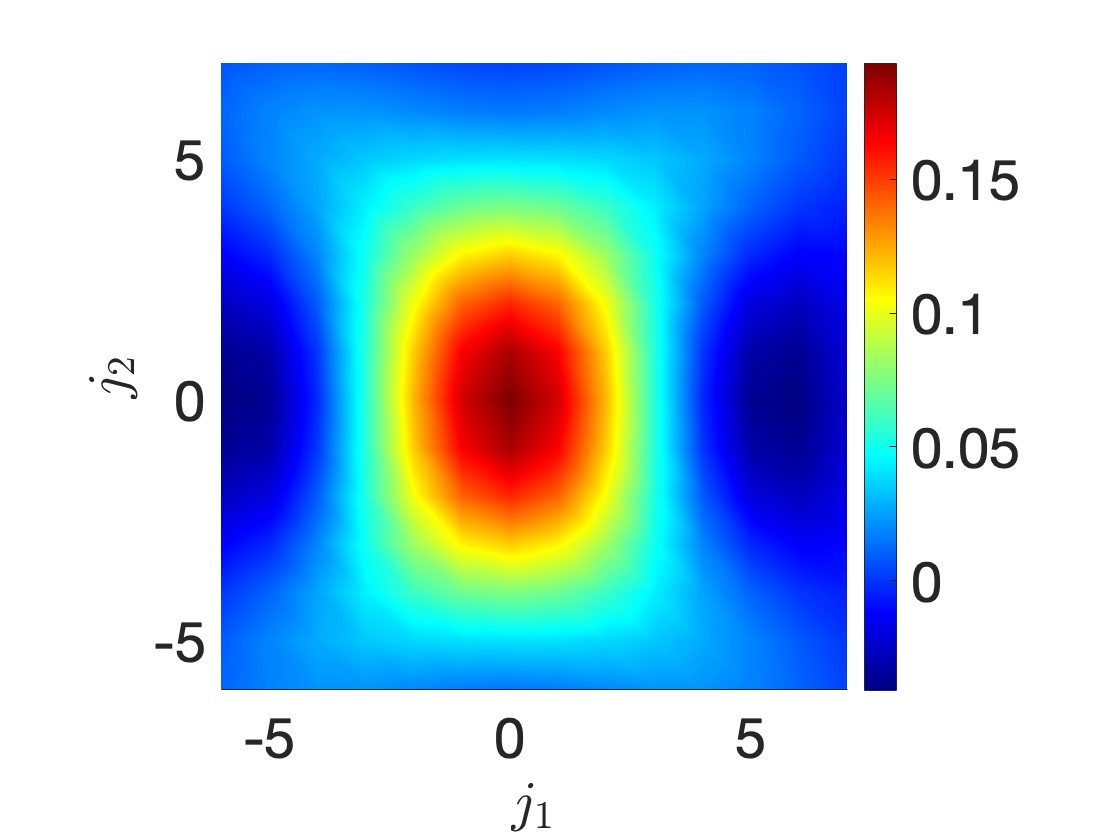}
        \caption{$\mathrm{Re}(\widehat{f}_{\text{per}})$}
        \label{trueFreal:tri}
    \end{subfigure}
    \hspace{-0.5cm}
    \begin{subfigure}[b]{0.25\textwidth}
        \includegraphics[width=\linewidth]{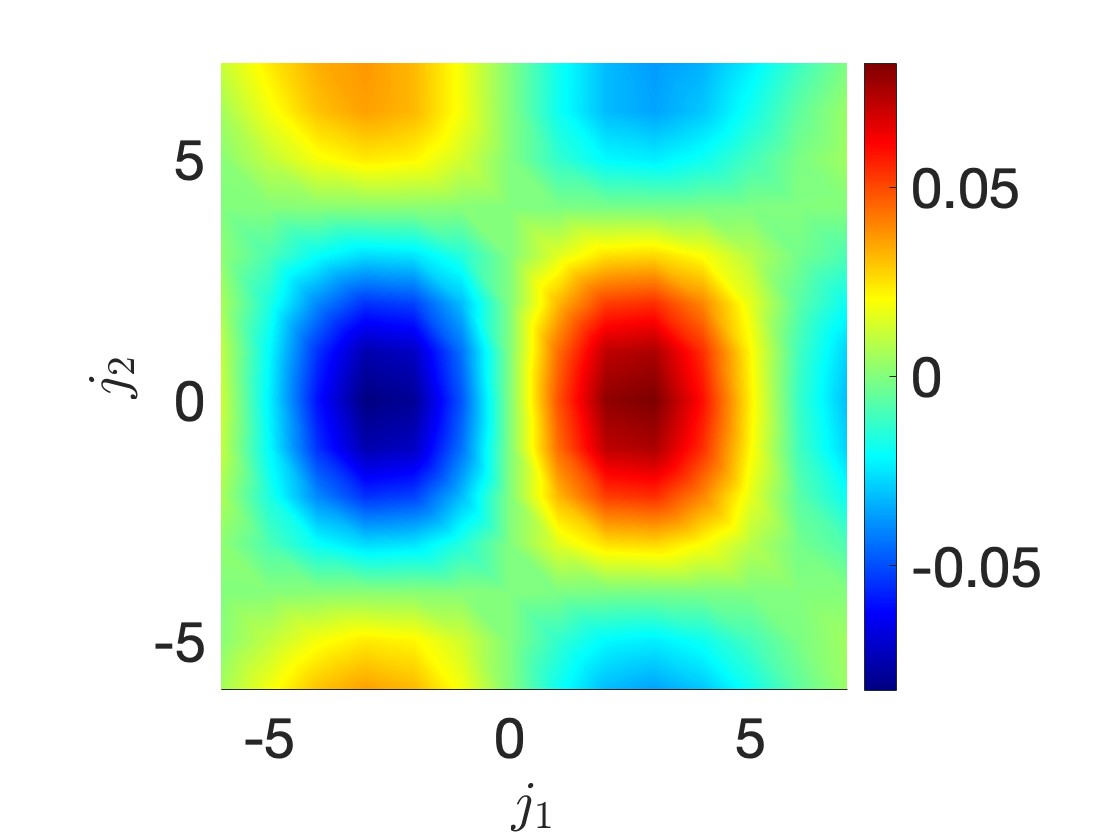}
        \caption{$\mathrm{Im}(\widehat{f}_{\text{per}})$}
        \label{trueFimag:tri}
    \end{subfigure}
    \hspace{-0.5cm}
    \begin{subfigure}[b]{0.25\textwidth}
        \includegraphics[width=\linewidth]{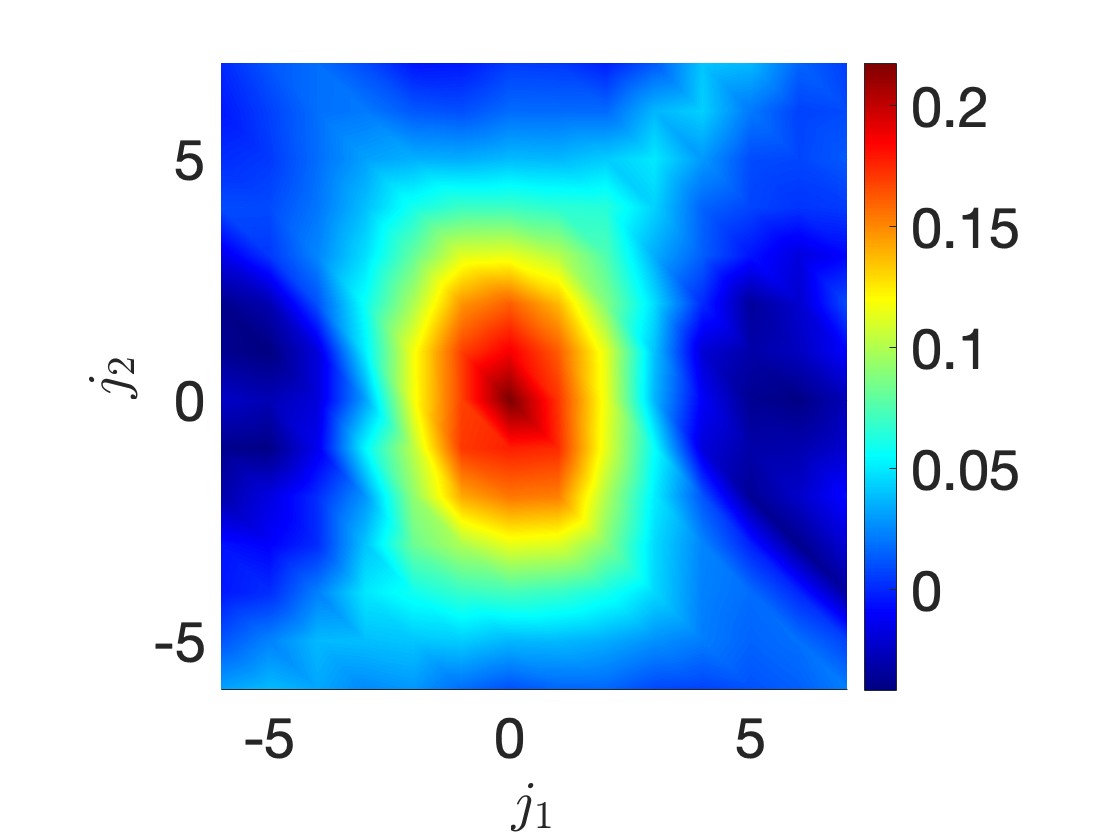}
        \caption{$\mathrm{Re}(\widehat{f}_{\Theta}^{\ast})$
        }
        \label{predFreal:tri}
    \end{subfigure}
    \hspace{-0.5cm}
    \begin{subfigure}[b]{0.25\textwidth}
        \includegraphics[width=\linewidth]{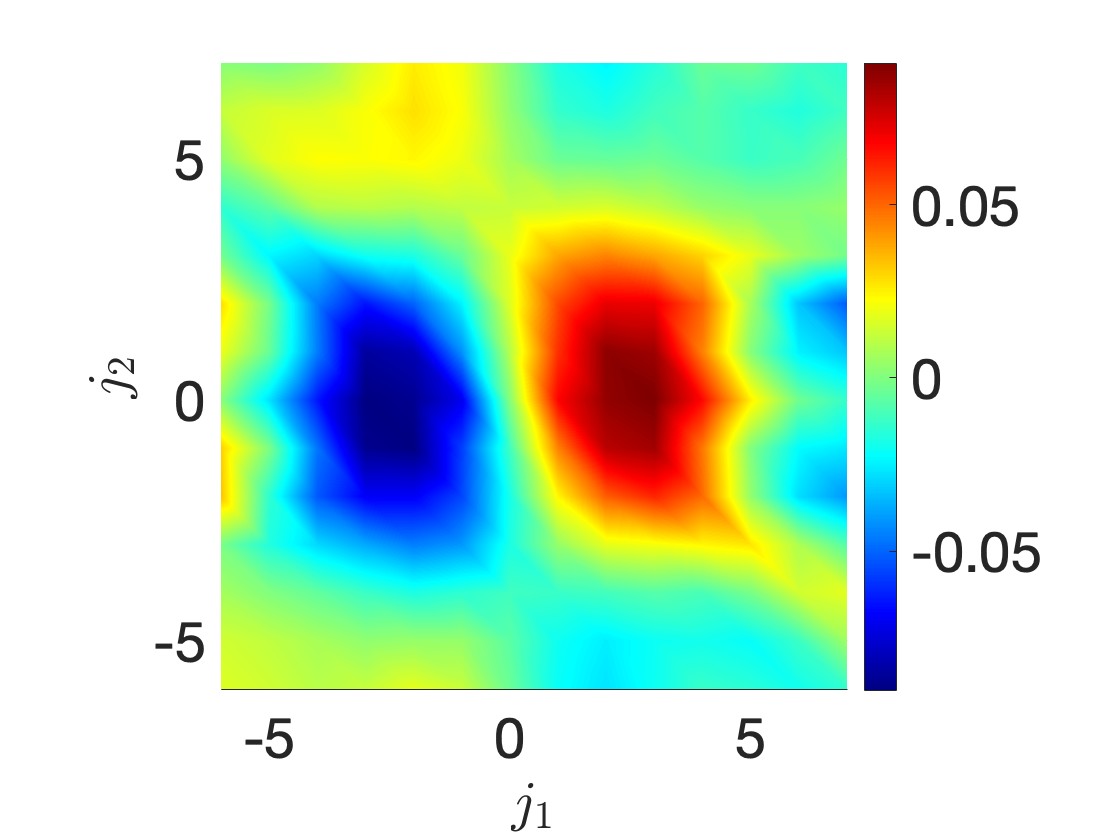}
        \caption{$\mathrm{Im}(\widehat{f}_{\Theta}^{\ast})$
        }
        \label{predFimag:tri}
    \end{subfigure}

    \medskip

    \centering
    \begin{subfigure}[b]{0.3\textwidth}
        \includegraphics[width=\linewidth]{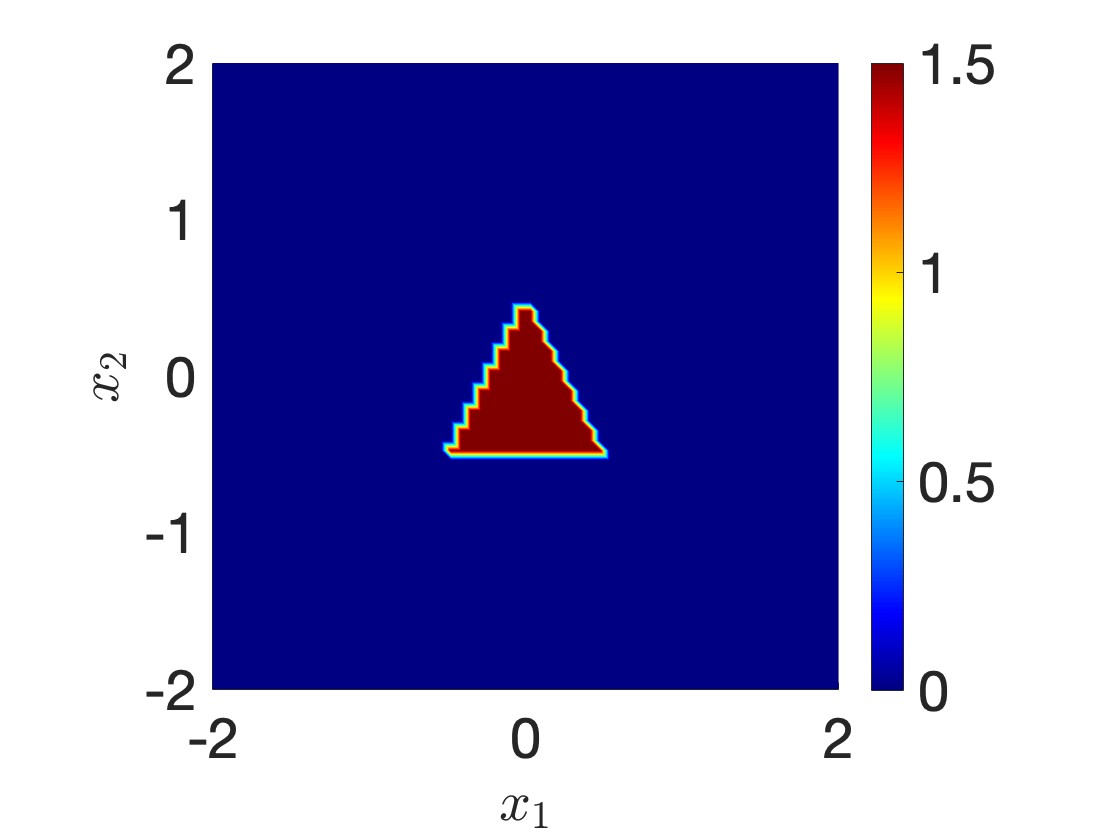}
        \caption{ $f$}
        \label{true:tri}
    \end{subfigure}
    \begin{subfigure}[b]{0.3\textwidth}
        \includegraphics[width=\linewidth]{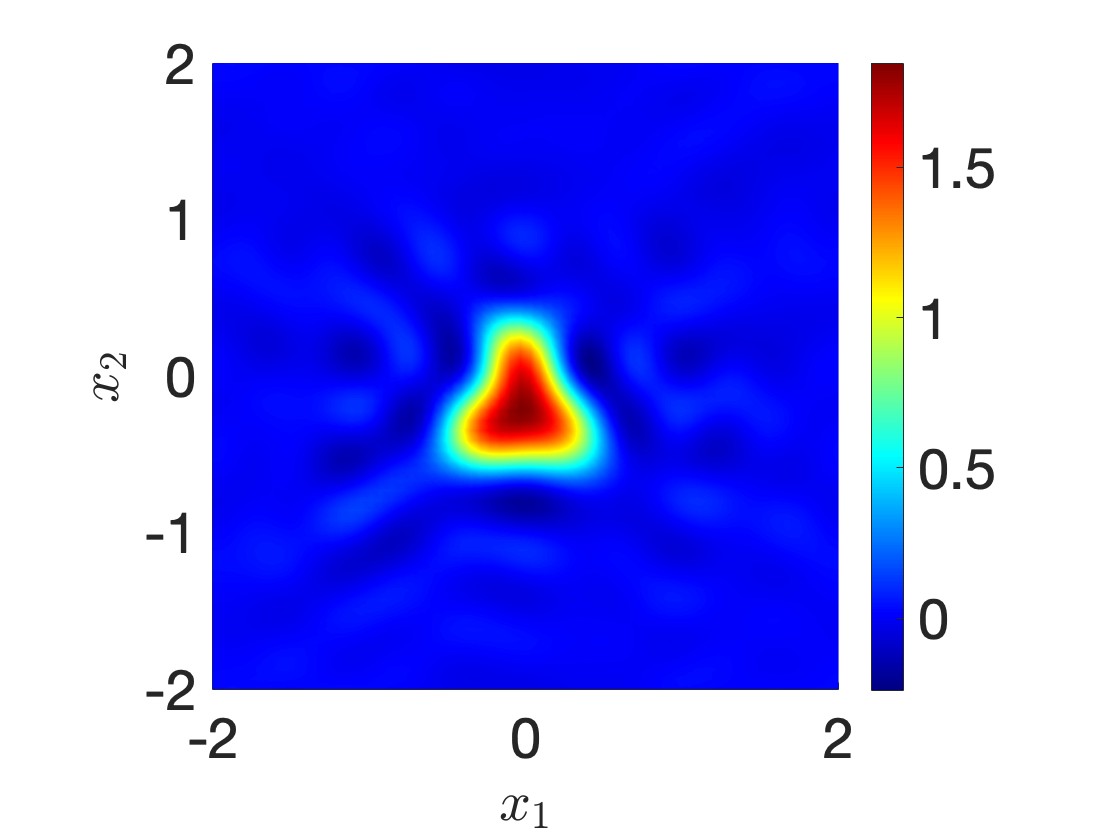}
        \caption{$f_{\text{comp}}$
        }
        \label{pred:tri}
    \end{subfigure}
    \begin{subfigure}[b]{0.3\textwidth}
            \includegraphics[width=1.0\linewidth]{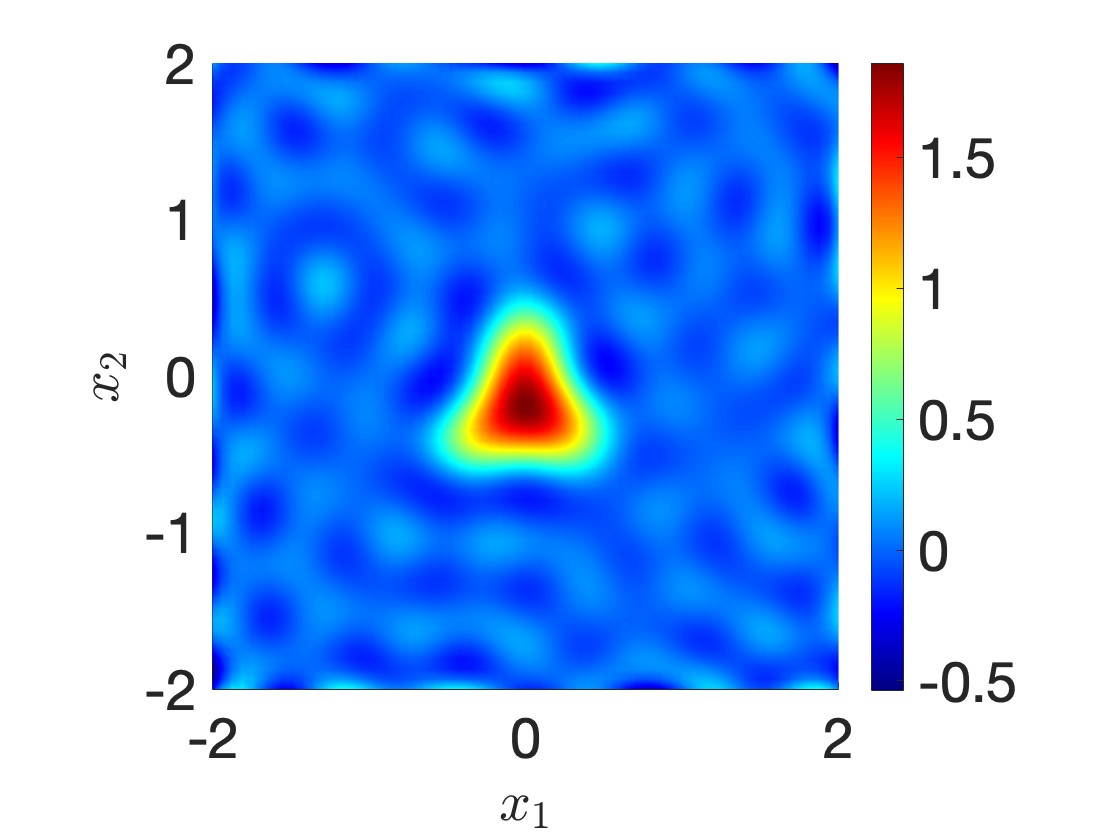}
            \caption{Linear least squares}
            \label{lls:tri}
        \end{subfigure}
   \caption{Reconstruction results for the triangle-shaped source. 
Top row: real \eqref{trueFreal:tri} and imaginary \eqref{trueFimag:tri} parts of the exact Fourier coefficients and their counterparts predicted by the proposed method \eqref{predFreal:tri}, \eqref{predFimag:tri}. 
Bottom row: comparison of the true source \eqref{true:tri} and the reconstructions obtained by the proposed method \eqref{pred:tri} and the linear least-squares approach \eqref{lls:tri}.}
    \label{fig: New Triangle Results}
\end{figure}

\begin{figure}[H]
\centering
    \begin{subfigure}[b]{0.3\textwidth}
        \includegraphics[width=\linewidth]{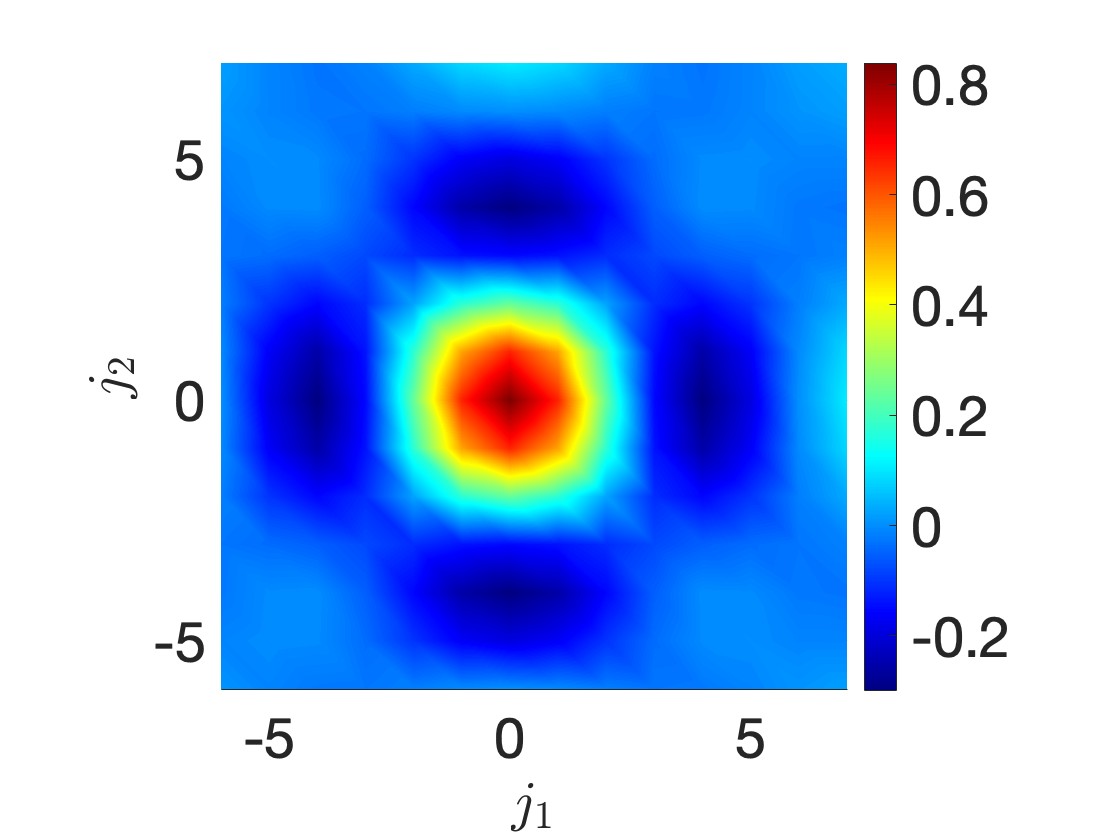}
        \caption{$\mathrm{Re}(\widehat{f}_{\text{per}})$ ($\mathrm{Im}(\widehat{f}_{\text{per}})=0)$}
        \label{trueF:squareh}
    \end{subfigure}
    \begin{subfigure}[b]{0.3\textwidth}
        \includegraphics[width=\linewidth]{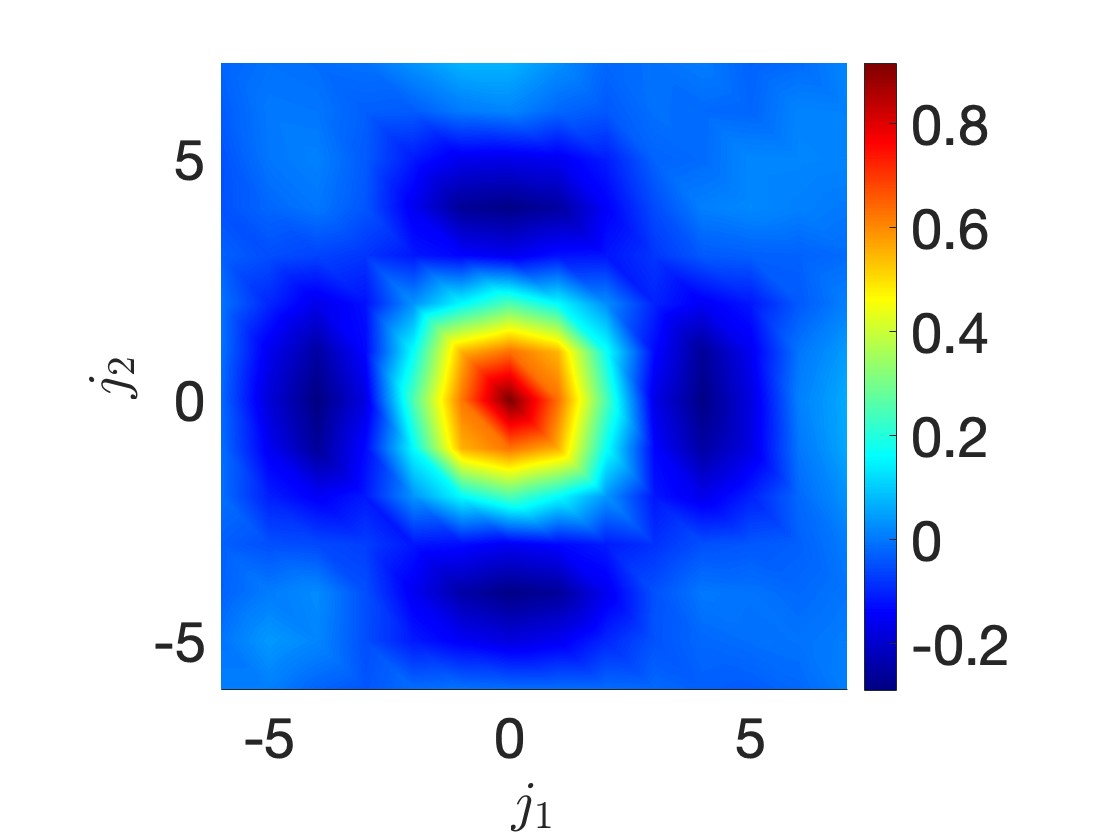}
        \caption{$\mathrm{Re}(\widehat{f}_{\Theta}^{\ast})$ 
        }
        \label{predFreal:squareh}
    \end{subfigure}
    \begin{subfigure}[b]{0.3\textwidth}
        \includegraphics[width=\linewidth]{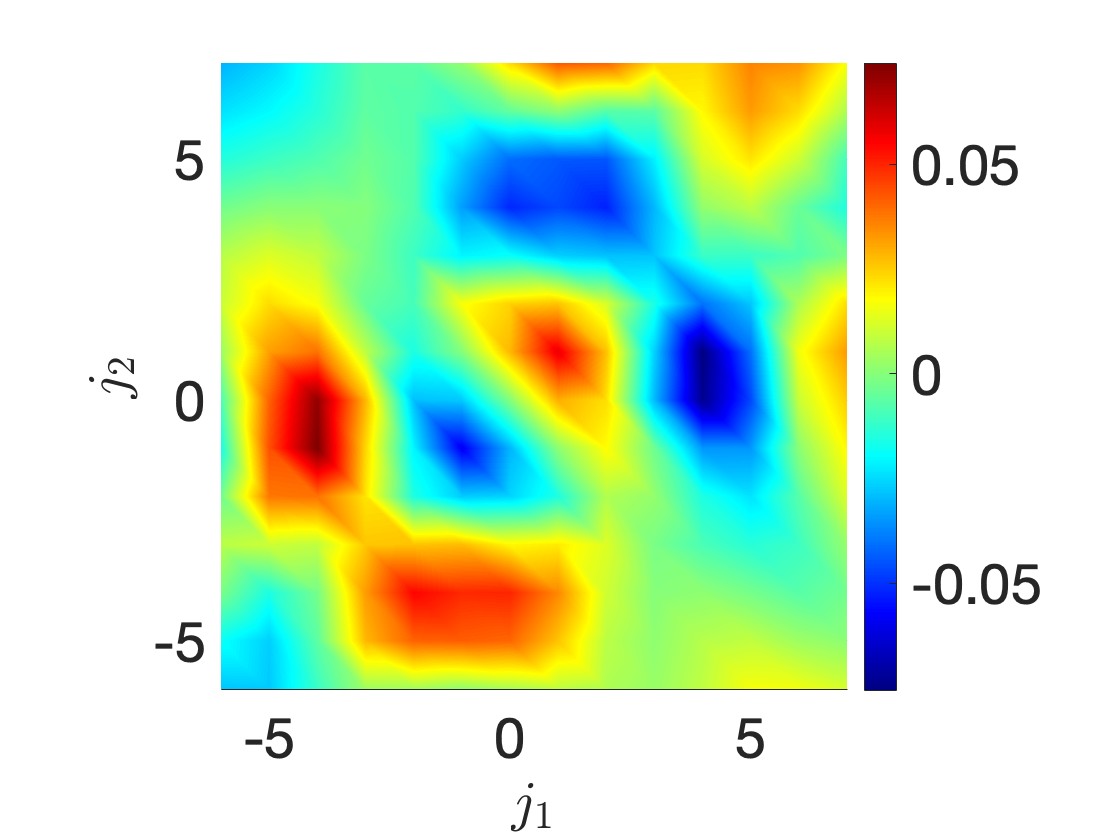}
        \caption{$\mathrm{Im}(\widehat{f}_{\Theta}^{\ast})$
        }
        \label{predFimag:squareh}
    \end{subfigure}

    \centering
    \begin{subfigure}[b]{0.3\textwidth}
        \includegraphics[width=\linewidth]{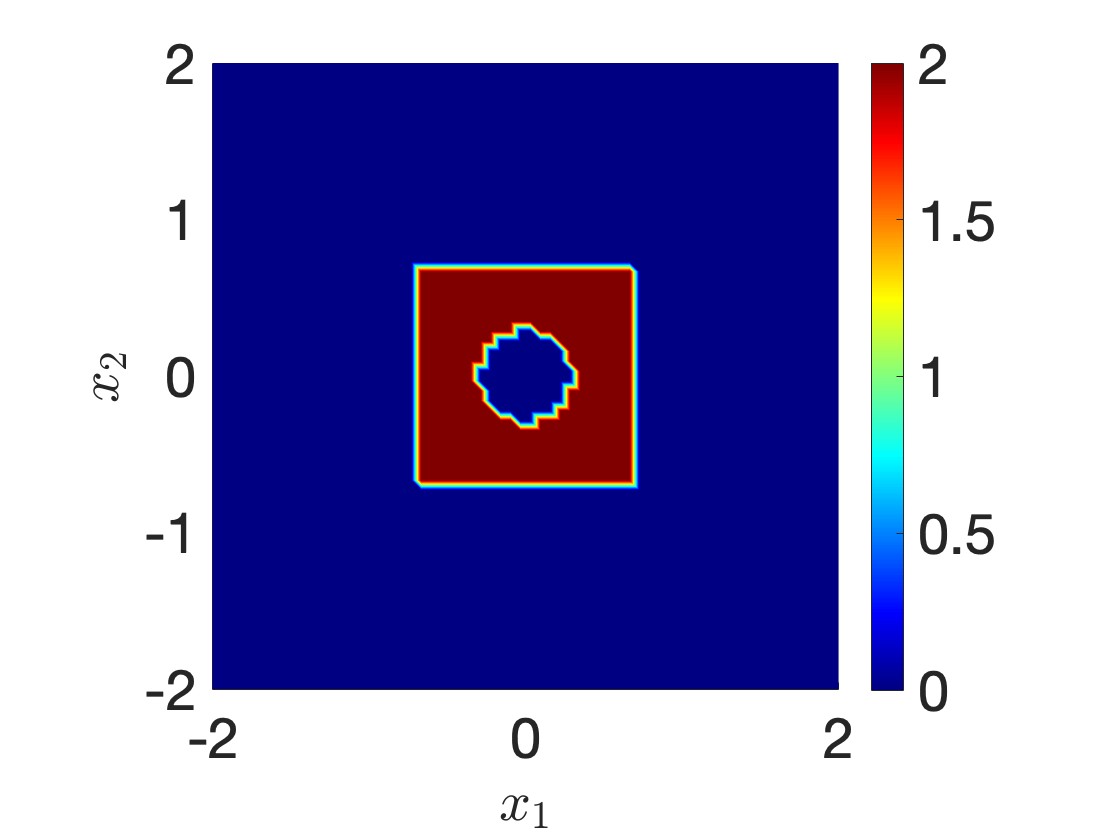}
        \caption{ $f$}
        \label{true:squareh}
    \end{subfigure}
    \begin{subfigure}[b]{0.3\textwidth}
        \includegraphics[width=\linewidth]{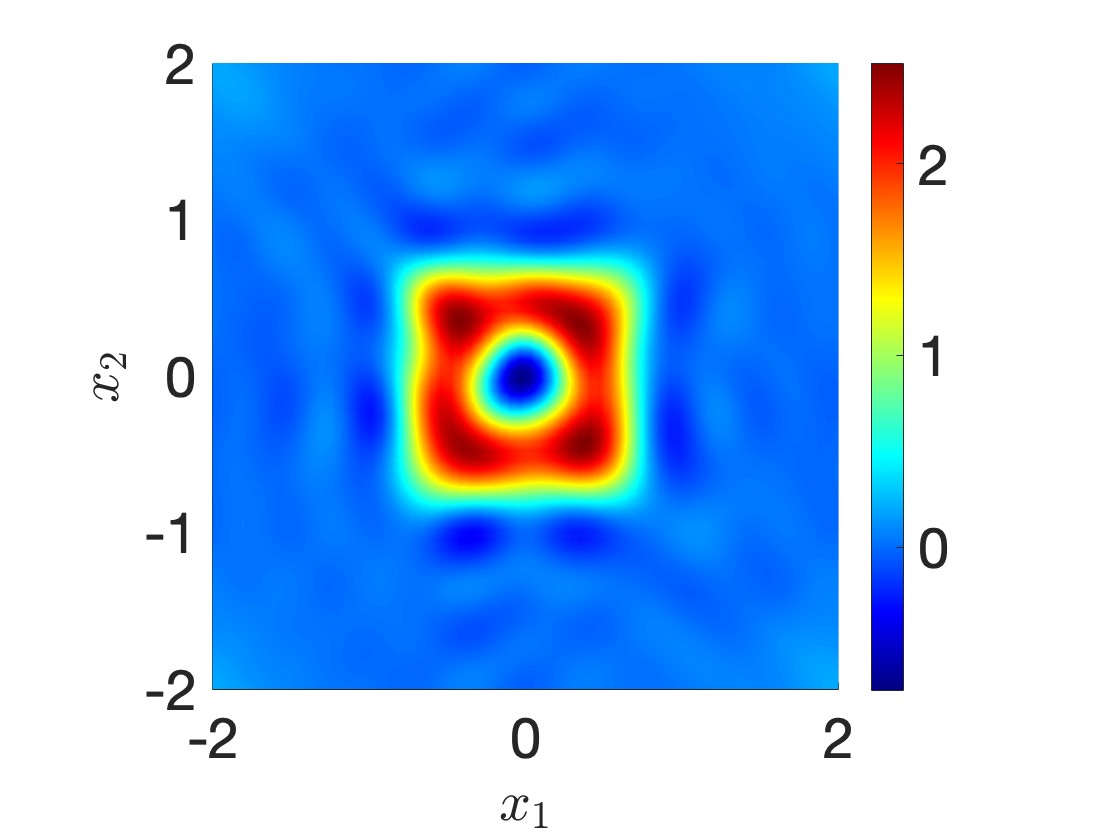}
        \caption{$f_{\text{comp}}$  
        }
        \label{pred:squareh}
    \end{subfigure}
    \begin{subfigure}[b]{0.3\textwidth}
            \includegraphics[width=1.0\linewidth]{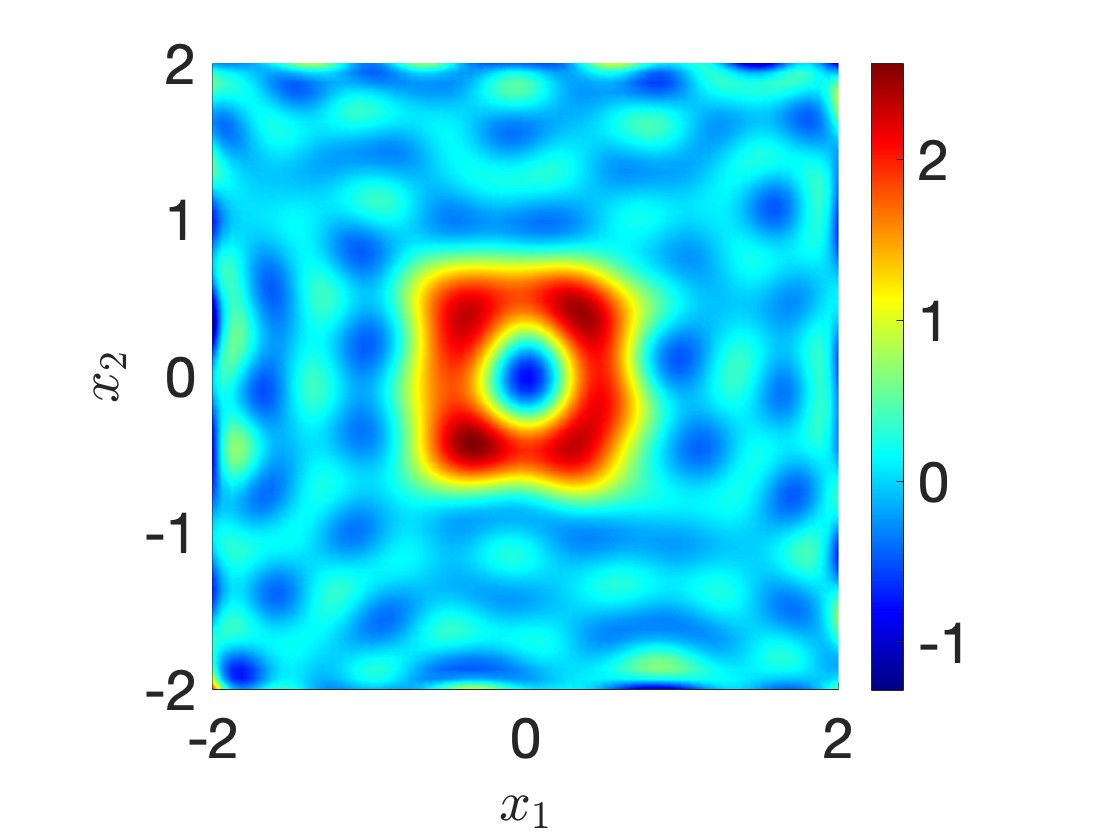}
            \caption{Linear least squares}
            \label{lls:squareh}
        \end{subfigure}
   \caption{Reconstruction results for the square-with-hole-shaped source. 
Top row: exact Fourier coefficients \eqref{trueF:squareh} and their real \eqref{predFreal:squareh} and imaginary \eqref{predFimag:squareh} parts predicted by the proposed method. 
Bottom row: comparison of the true source \eqref{true:squareh} and the reconstructions obtained by the proposed method \eqref{pred:squareh} and the linear least-squares approach \eqref{lls:squareh}.}
    \label{fig: New Rect with Hole Results}
\end{figure}

\begin{table}[H]
    \centering
    \begin{tabular}{|c|c|c|c|c|c|}
    \hline
    Source  &  $\mathcal{E}_{\text{Fourier}}\left[\widehat{f}_{\Theta}^{\ast}\right]$ & Computation time \\
    \hline
    Square  & 13.82\%  & 2 min 41 sec \\
    \hline
    Kite &  13.34\%  & 3 min 4 sec \\
    \hline
    Triangle & 13.92\%  & 2 min 19 sec \\
    \hline
    Rect with Hole  & 10.97\%  & 4 min 39 sec \\
    \hline
    \end{tabular}
    \caption{Relative error and computational time for $2$D reconstructions}
    \label{table: new 2D table}
\end{table}

\begin{figure}[H]
    \centering
   
    \begin{subfigure}[b]{0.4\textwidth}
        \includegraphics[width=\linewidth]{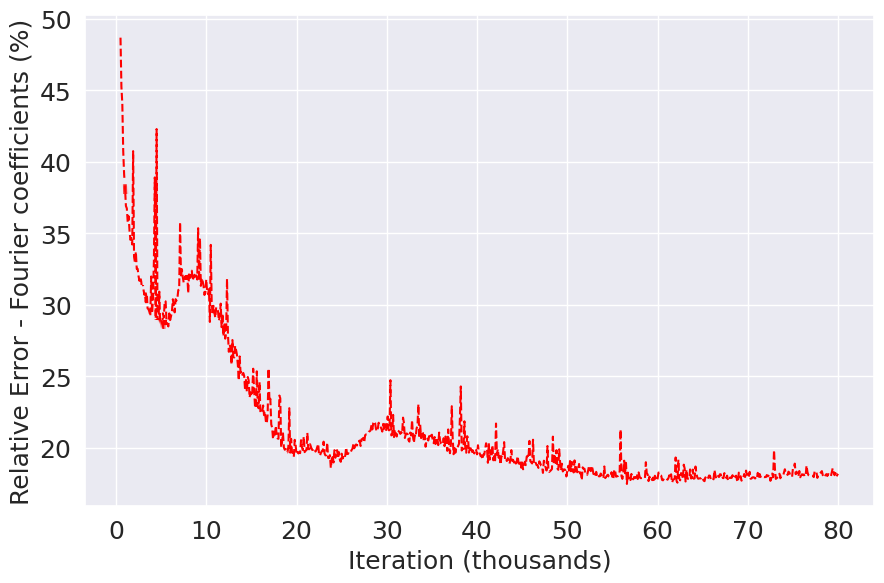}
        \caption{Relative error for  kite-shaped source}
       
        \label{fig: Relative Error against epochs - kite}
    \end{subfigure}
    \begin{subfigure}[b]{0.4\textwidth}
        \includegraphics[width=\linewidth]{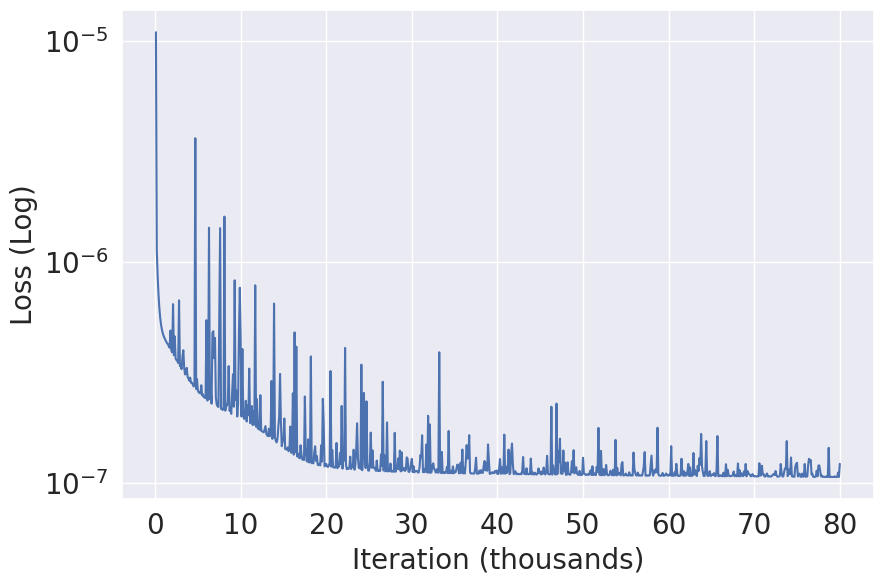}
        \caption{Log loss for kite-shaped source}
        \label{fig: Log loss against epochs - kite}
    \end{subfigure}
    \vspace{0.4 cm}
     \begin{subfigure}[b]{0.4\textwidth}        \includegraphics[width=\linewidth]{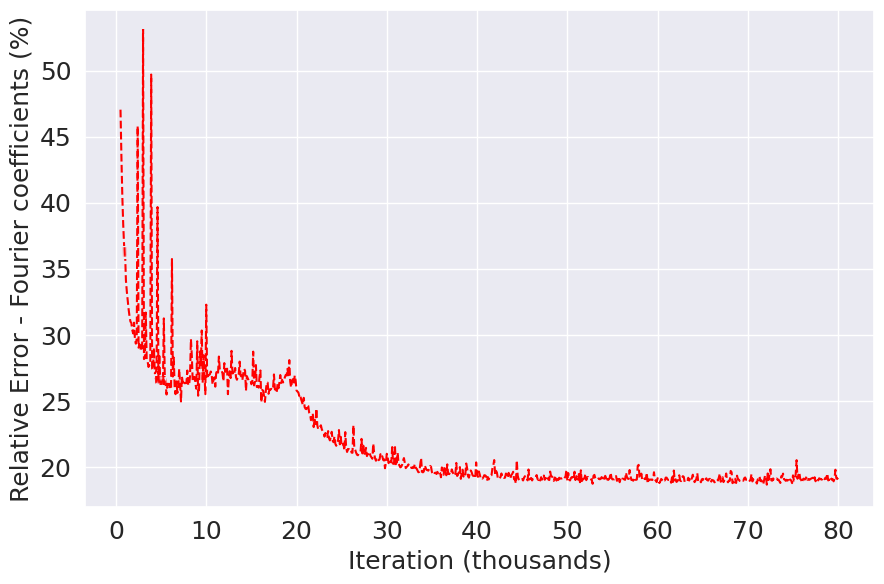}
    \caption{Relative error for  triangle-shaped source}
    \label{fig: Relative Error against epochs - Tri}
    \end{subfigure}
    \begin{subfigure}[b]{0.4\textwidth}
    \includegraphics[width=\linewidth]{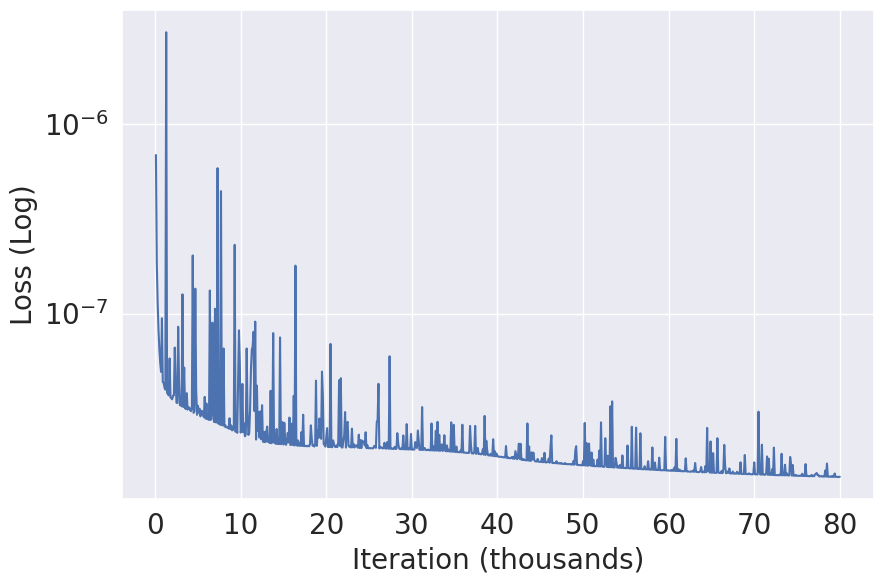}
        \caption{Log loss  for triangle-shaped source }
        \label{fig: Log loss against epochs - Tri}
    \end{subfigure}
    \caption{(a, c) Relative errors between the true and predicted Fourier coefficients plotted against iteration number. (b, d) Evolution of the loss functions plotted against iteration number.} 
    \label{fig:Update process and Loss curve (2D) - Kite}
\end{figure}

\subsection{$3$D experimental configuration and results} Similarly to the $2$D case, the algorithm yields a good reconstruction for the following $3$D sources relatively quickly: a cube-shaped source (Figure \ref{fig:new cube-3d}), a sphere-shaped source (Figure \ref{fig:new sphere-3d}), a pyramid-shaped source (Figure \ref{fig:new pyramid-3d}). The algorithm is able to accurately identify the boundaries, locations, sizes, and coefficient values of the source functions. This is numerically substantiated by Table \ref{table: new 3D table}. The relatively short time taken (Table \ref{table: new 3D table}) is attributed to the low computational complexity of $\mathcal{O}(M N^{3} \log N)$. By defining the algorithm in the spectral domain, we overcome the need for $N^{6}$ operations when evaluating the convolutional structure in the equation \ref{eq:Iz}.  The iso-value for the iso-surface plots is set to be $30\%$ of the peak-value of the reconstruction.

\begin{figure}[H]
\centering

 \begin{subfigure}{0.325\textwidth}
            \includegraphics[scale = 0.14]{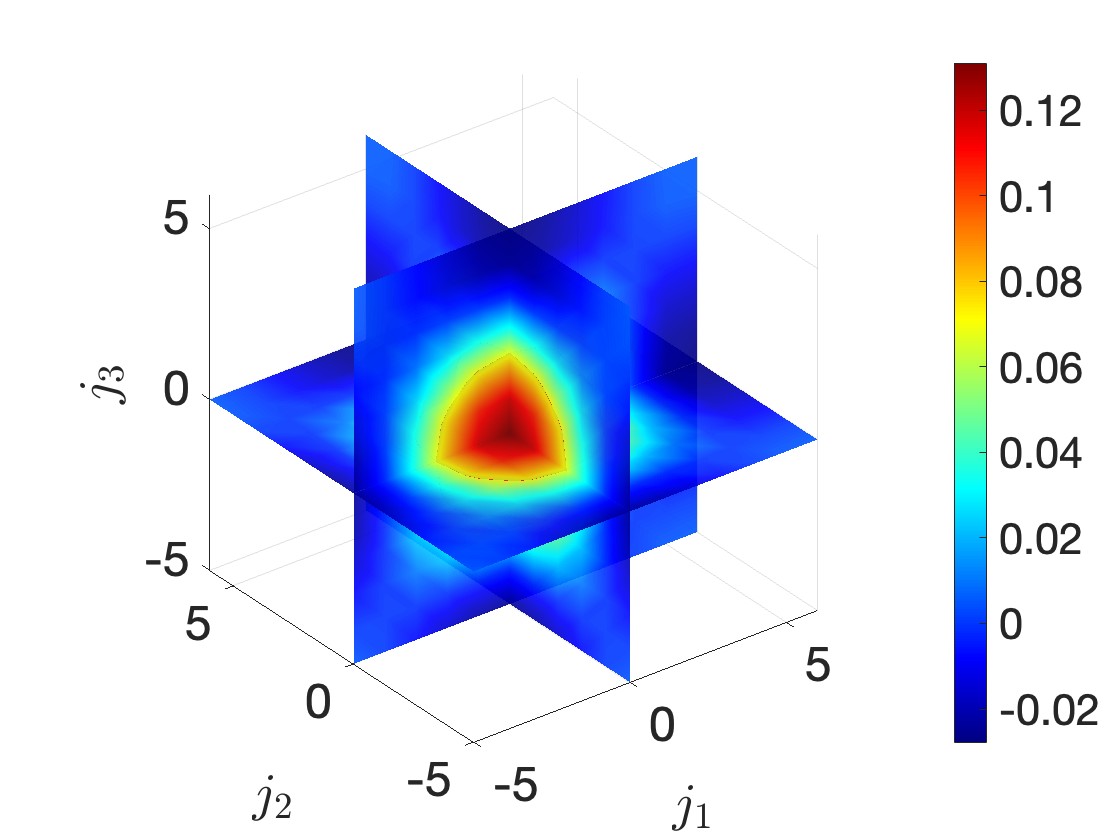}
            \caption{$\mathrm{Re}(\widehat{f}_{\text{per}})$ $(\mathrm{Im}(\widehat{f}_{\text{per}})=0)$}
            \label{trueF:cube}
        \end{subfigure}
                \begin{subfigure}{0.325\textwidth}
            \includegraphics[scale = 0.14]{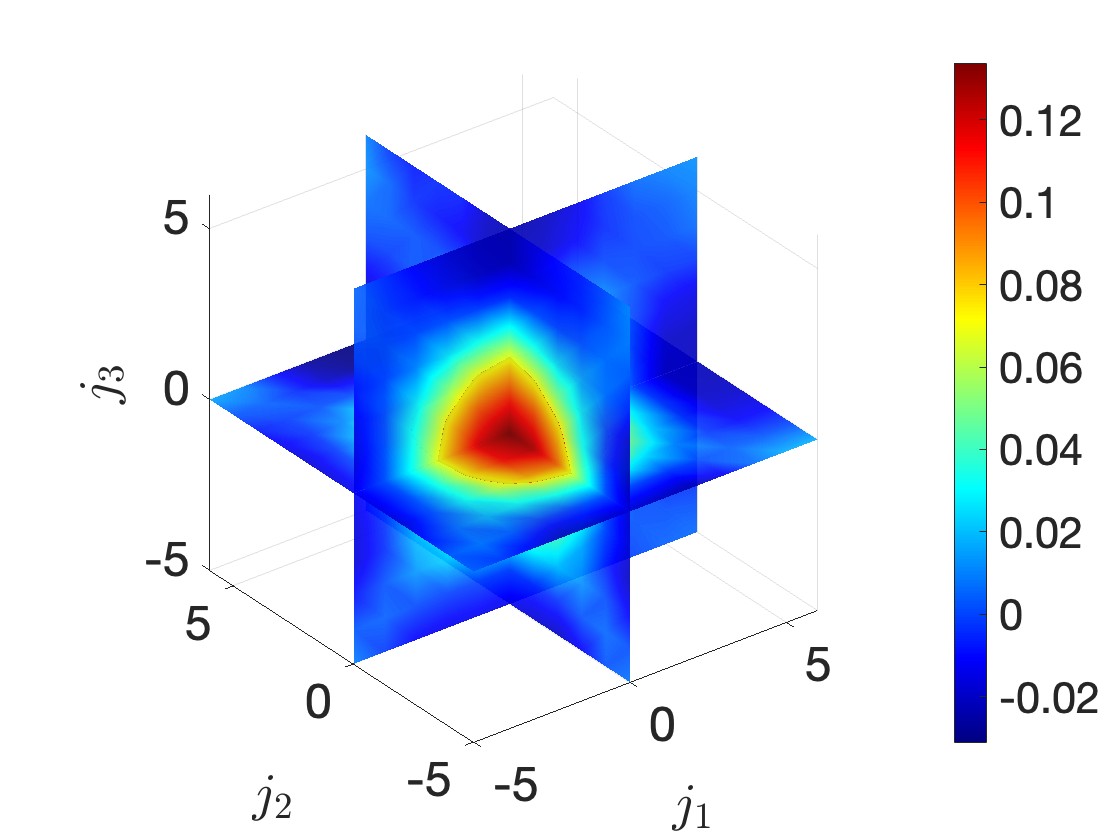}
            \caption{$\mathrm{Re}\left(\widehat{f}^{\ast}_{\Theta}\right)$}
            \label{predFreal:cube}
        \end{subfigure}
        \begin{subfigure}{0.325\textwidth}
            \includegraphics[scale = 0.14]{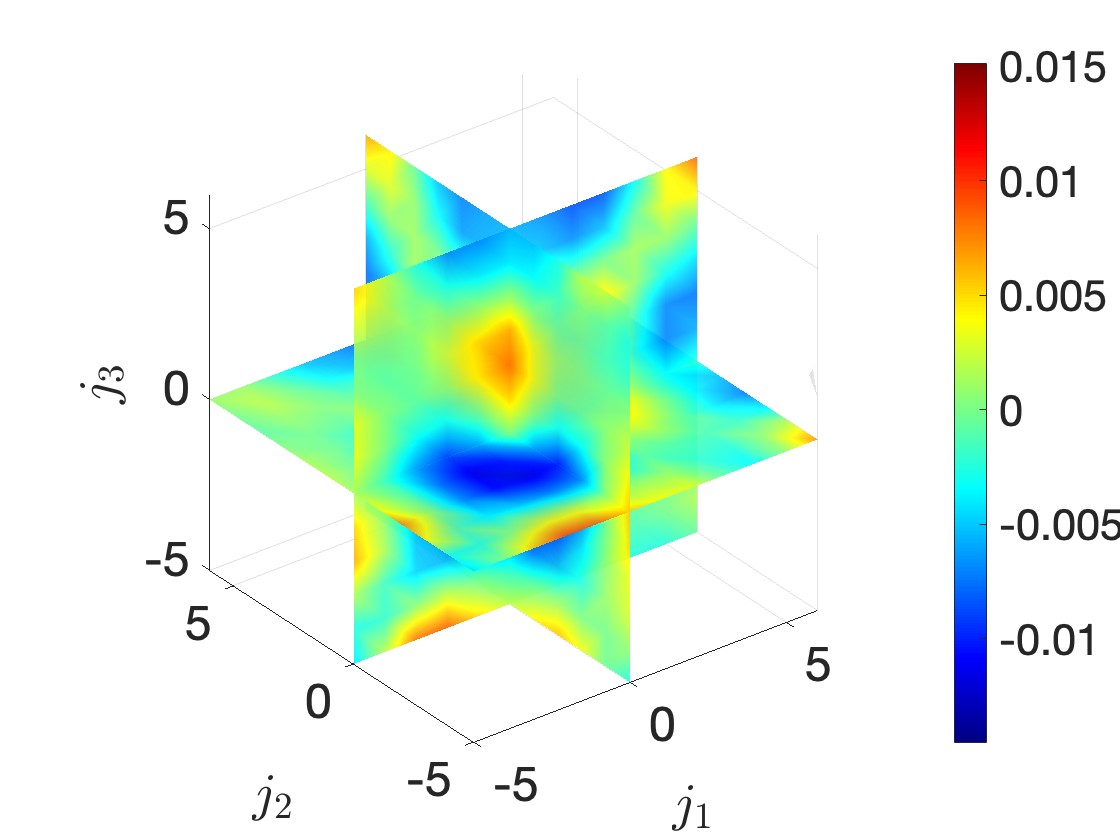}
            \caption{$\mathrm{Im}\left(\widehat{f}^{\ast}_{\Theta}\right)$}
            \label{predFimag:cube}
        \end{subfigure}

    \begin{center}

        \begin{subfigure}
        {0.325\textwidth}
            \includegraphics[scale = 0.14]{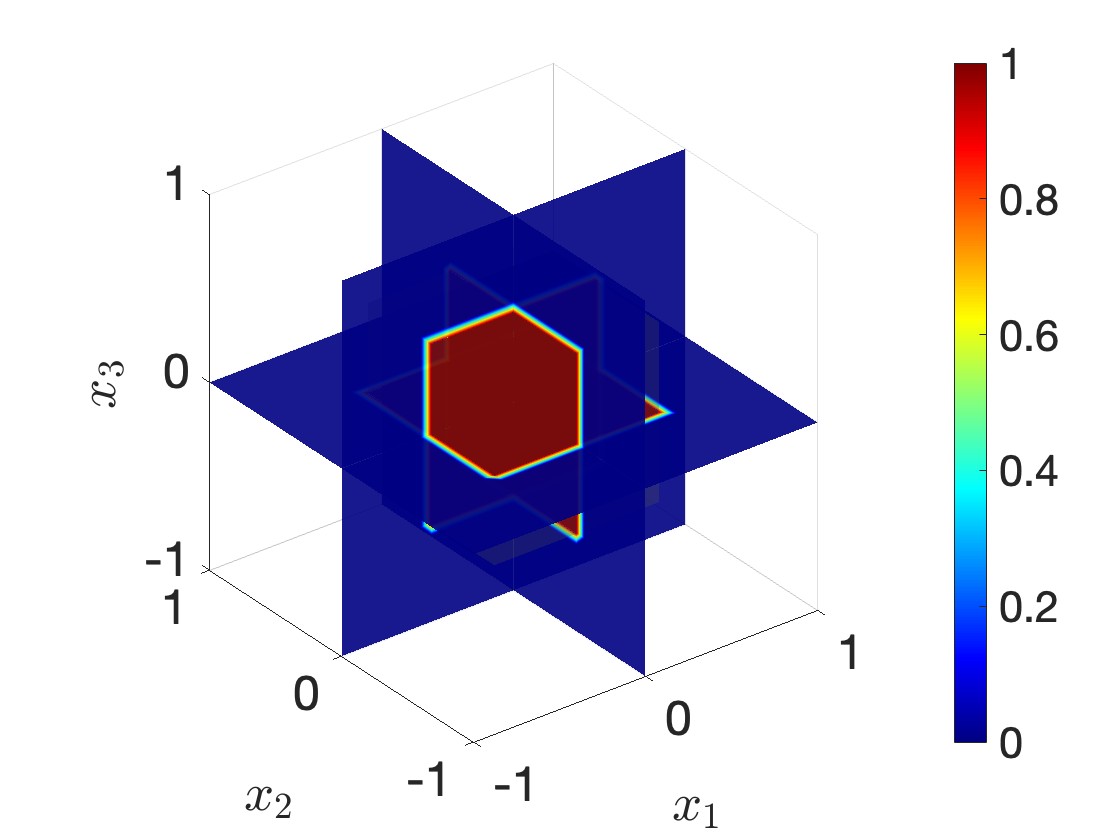}
            \caption{$f$}
            \label{truecross:cube}
 \end{subfigure}
            \begin{subfigure}
     {0.325\textwidth}
            \includegraphics[scale = 0.14]{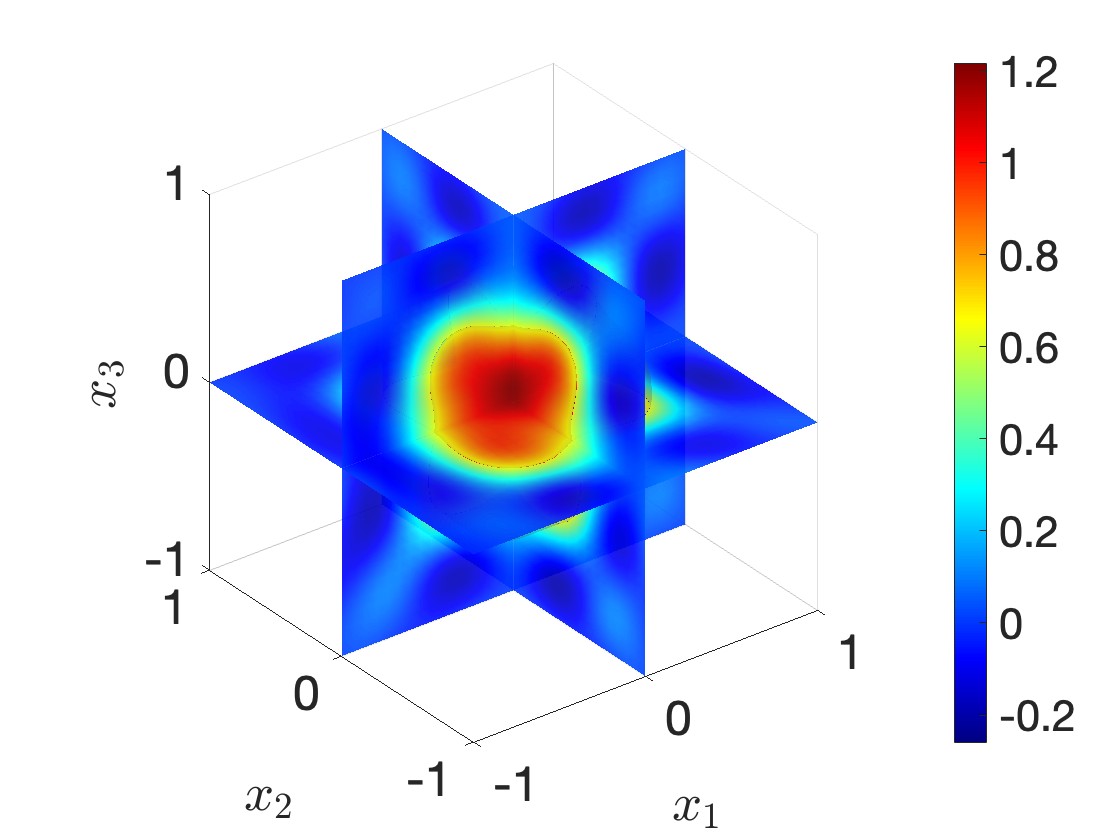}
        \caption{$f_{\text{comp}}$}
            \label{predcross:cube}
        \end{subfigure}
        
        \begin{subfigure}{0.325\textwidth}
            \includegraphics[scale = 0.15]{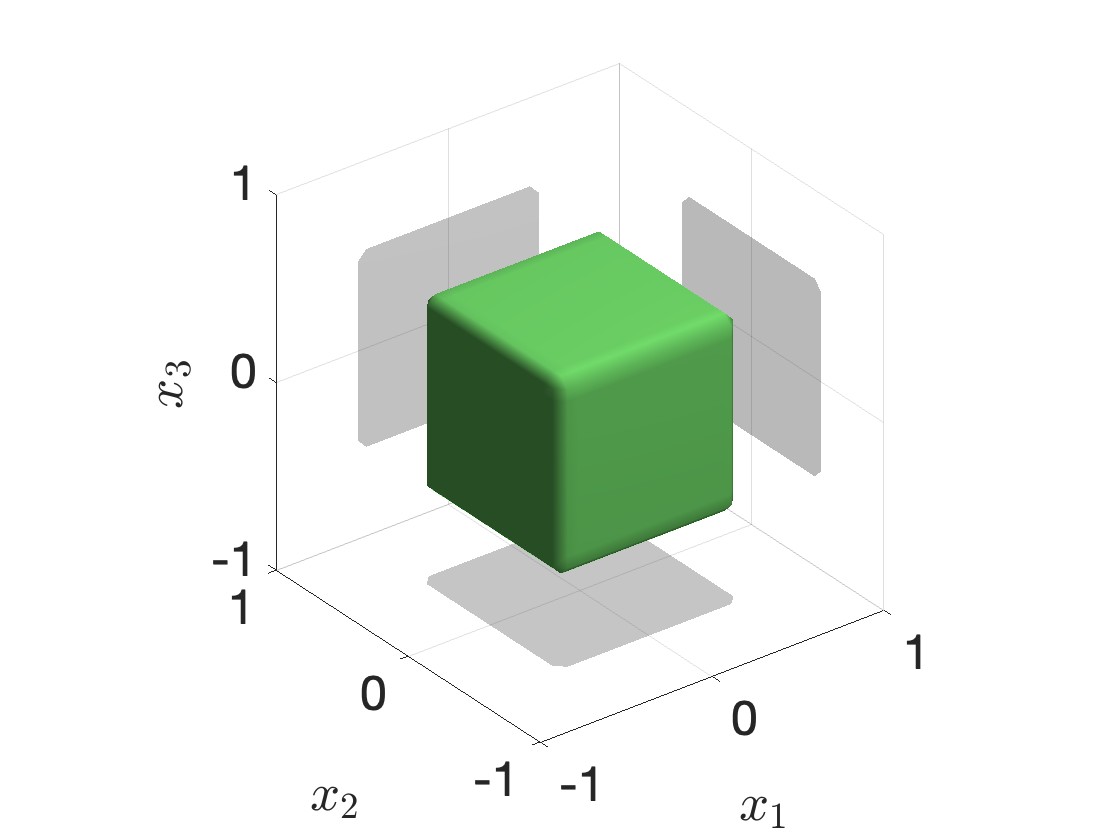}
            \caption{Iso-surface plot
            of (d)}
            \label{trueiso:cube}
        \end{subfigure}
          \begin{subfigure}{0.325\textwidth}
            \includegraphics[scale = 0.15]{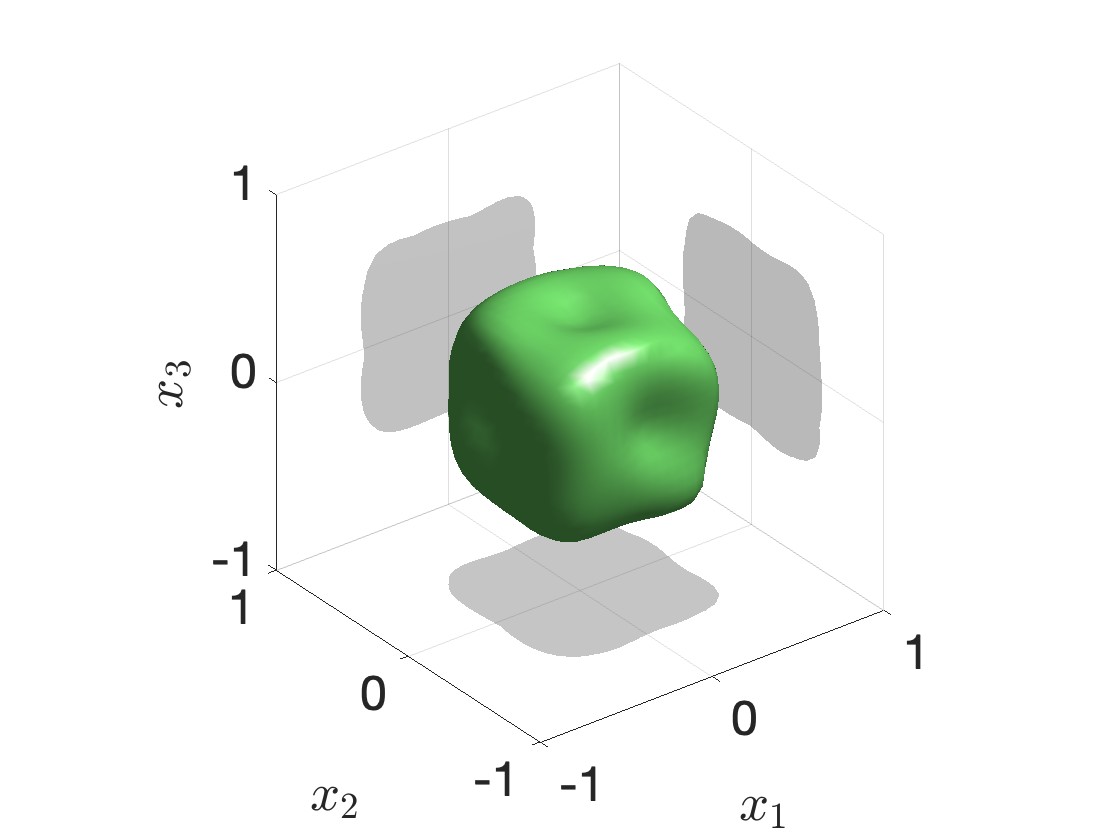}
            \caption{Iso-surface plot
            of (e)}
            \label{prediso:cube}
        \end{subfigure}

         \end{center}

 \caption{Reconstruction results for the cube-shaped source. 
Top row: exact Fourier coefficients \eqref{trueF:cube} and the real \eqref{predFreal:cube} and imaginary \eqref{predFimag:cube} parts of the coefficients predicted by the proposed method. 
Bottom rows: cross-sectional views and corresponding iso-surface representations of the true source \eqref{truecross:cube}, \eqref{trueiso:cube} and its reconstruction \eqref{predcross:cube}, \eqref{prediso:cube}.}

    \label{fig:new cube-3d}
\end{figure}

\begin{figure}[H]
    \begin{center}

    \begin{subfigure}{0.3\textwidth}
            \includegraphics[scale = 0.14]{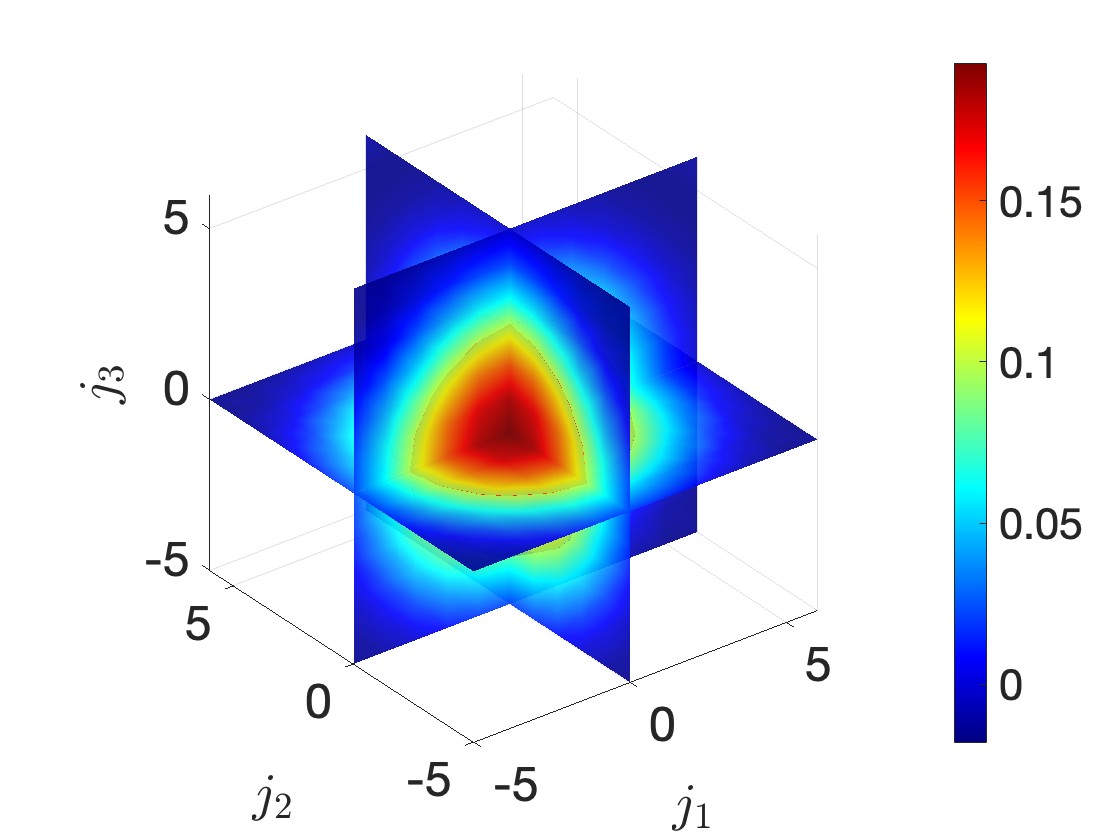}
            \caption{$\mathrm{Re}(\widehat{f}_{\text{per}})$ $(\mathrm{Im}(\widehat{f}_{\text{per}})=0)$}
            \label{trueF:sphere}
        \end{subfigure}
         \begin{subfigure}{0.3\textwidth}
            \includegraphics[scale = 0.14]{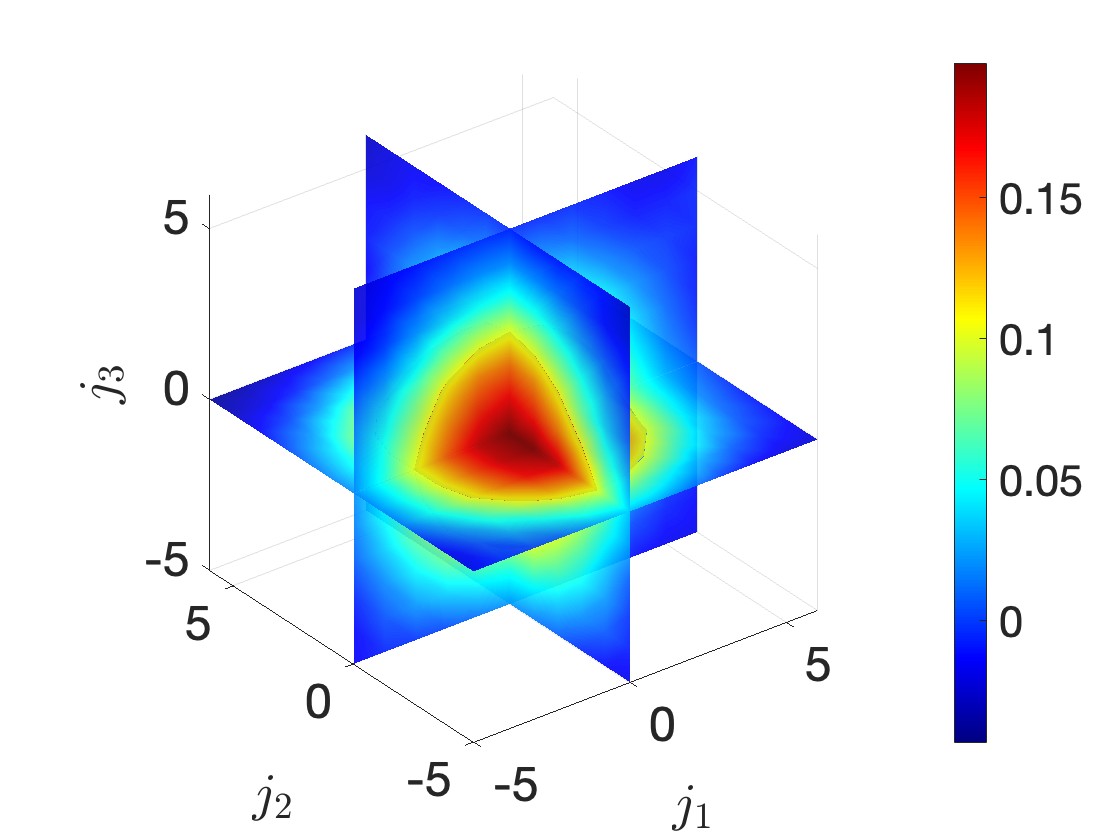}
            \caption{$\mathrm{Re}\left(\widehat{f}^{\ast}_{\Theta}\right)$}
            \label{predFreal:sphere}
        \end{subfigure}
        \begin{subfigure}{0.3\textwidth}
            \includegraphics[scale = 0.14]{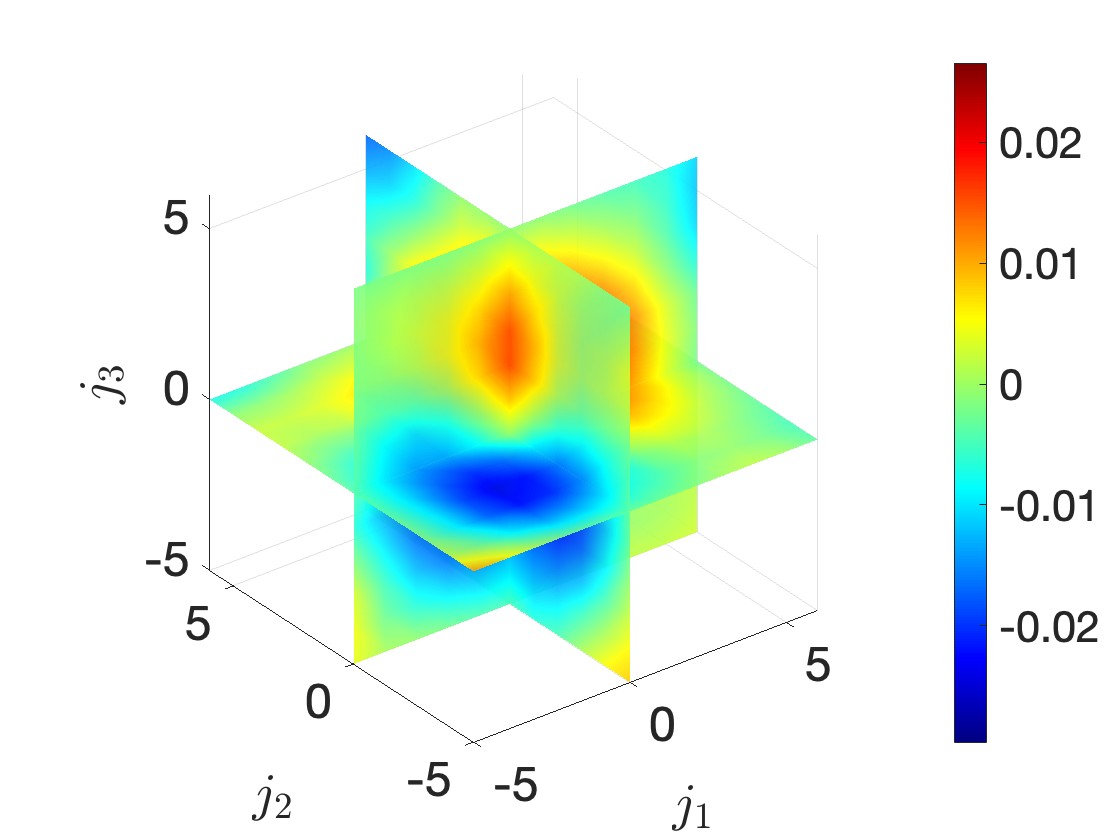}
            \caption{$\mathrm{Im}\left(\widehat{f}^{\ast}_{\Theta}\right)$}
            \label{predFimag:sphere}
        \end{subfigure}

   \begin{subfigure}{0.325\textwidth}
            \includegraphics[scale = 0.14]{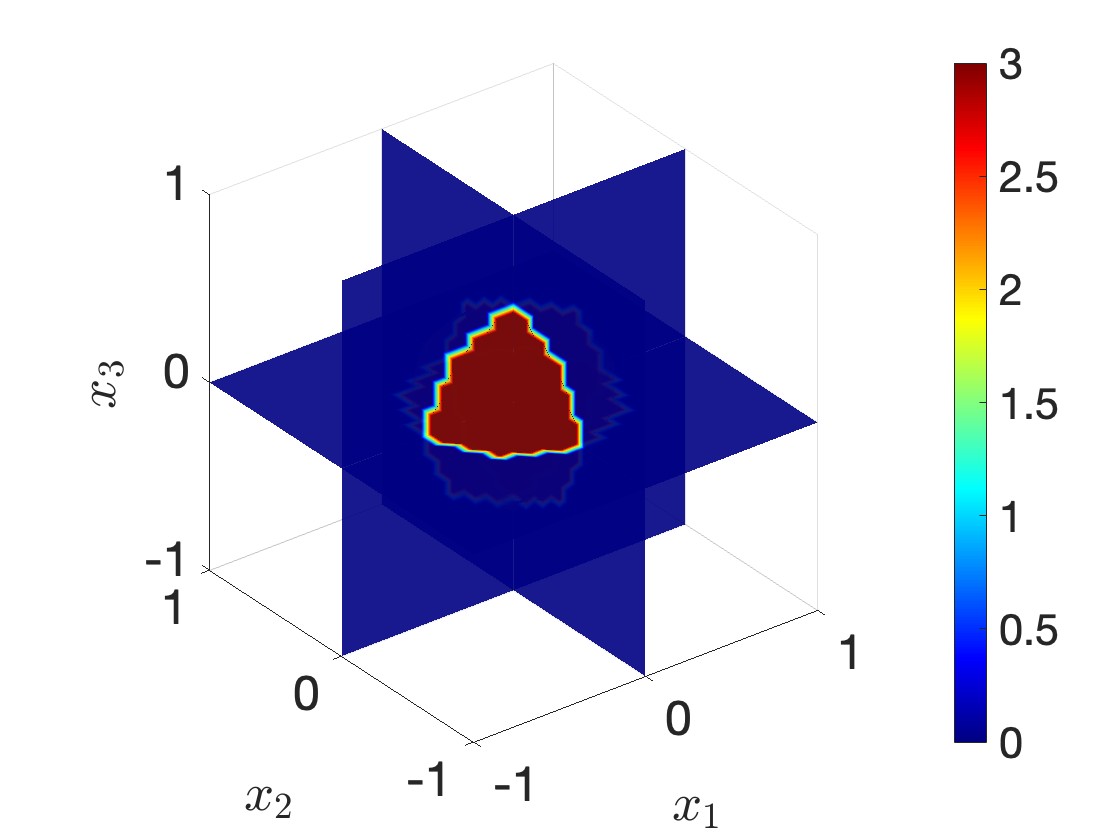}
            \caption{$f$}
            \label{truecross:sphere}
        \end{subfigure}
        \begin{subfigure}{0.325\textwidth}
            \includegraphics[scale = 0.14]{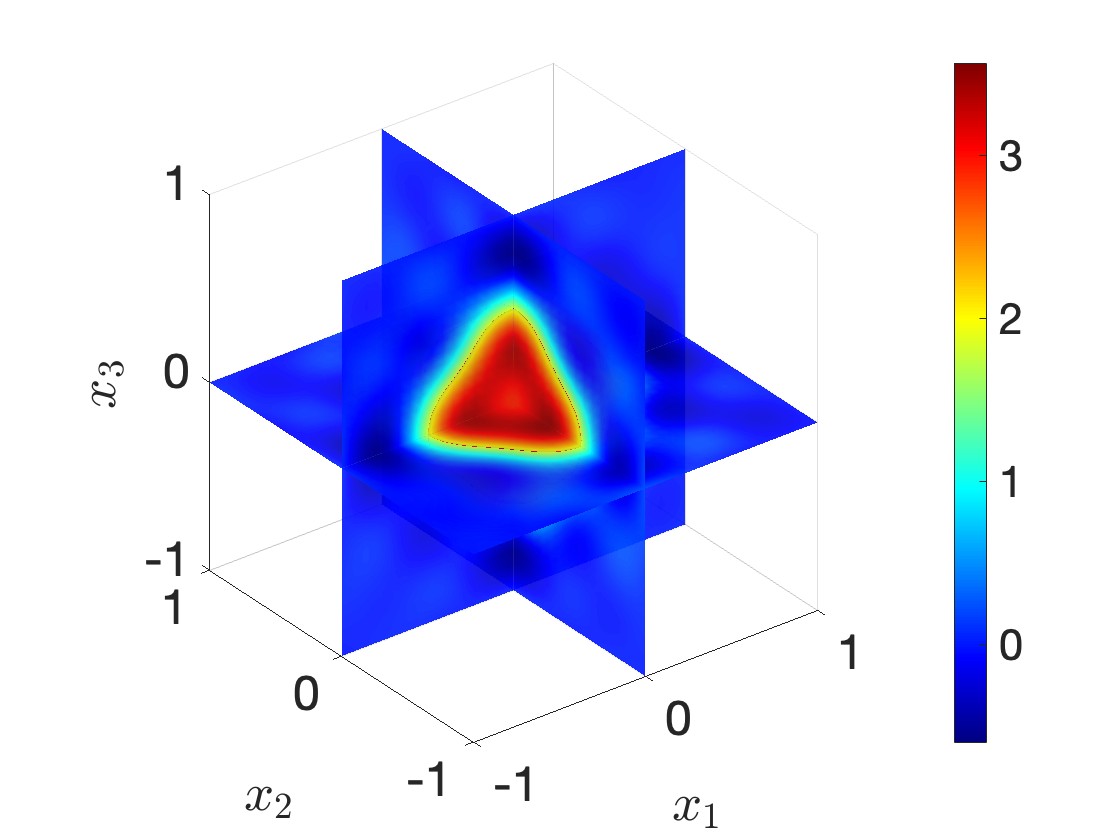}
            \caption{$f_{\text{comp}}$}
            \label{predcross:sphere}
        \end{subfigure}
        
        \begin{subfigure}[b]{0.325\textwidth}
            \includegraphics[scale = 0.14]{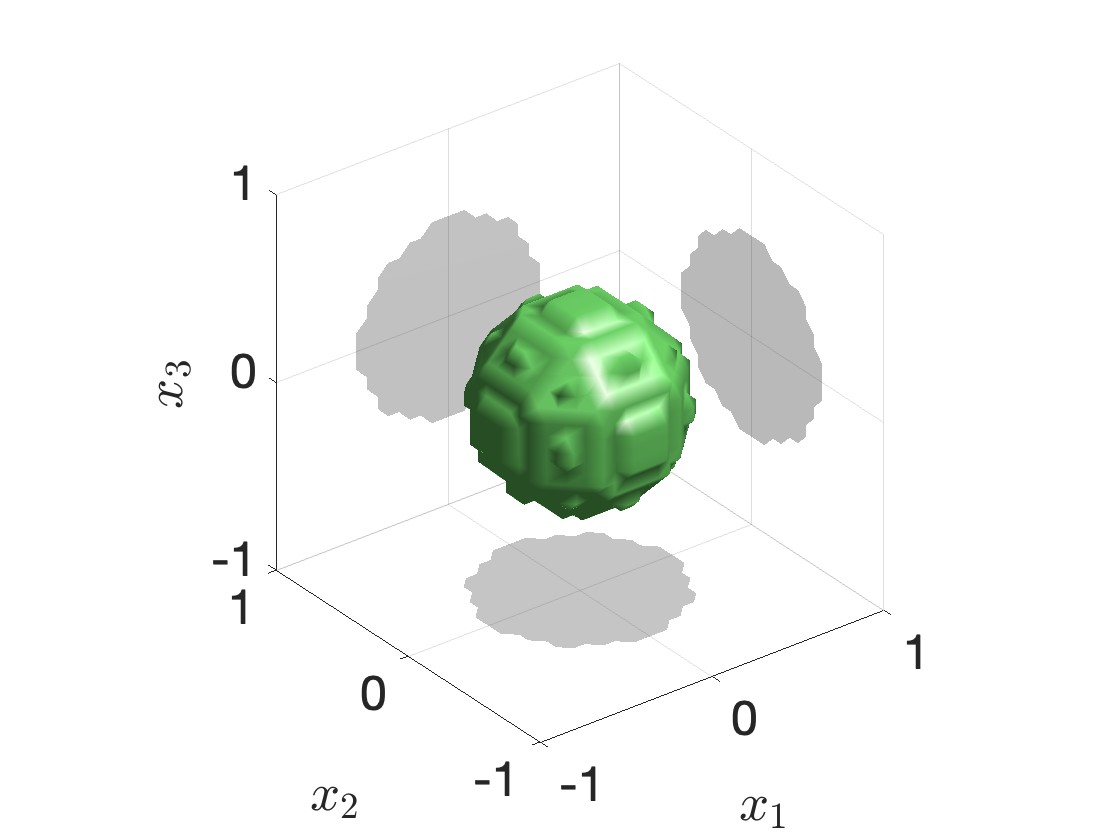}
            \caption{Iso-surface plot
            of (d)}
            \label{trueiso:sphere}
        \end{subfigure}
         \begin{subfigure}{0.325\textwidth}
            \includegraphics[scale = 0.14]{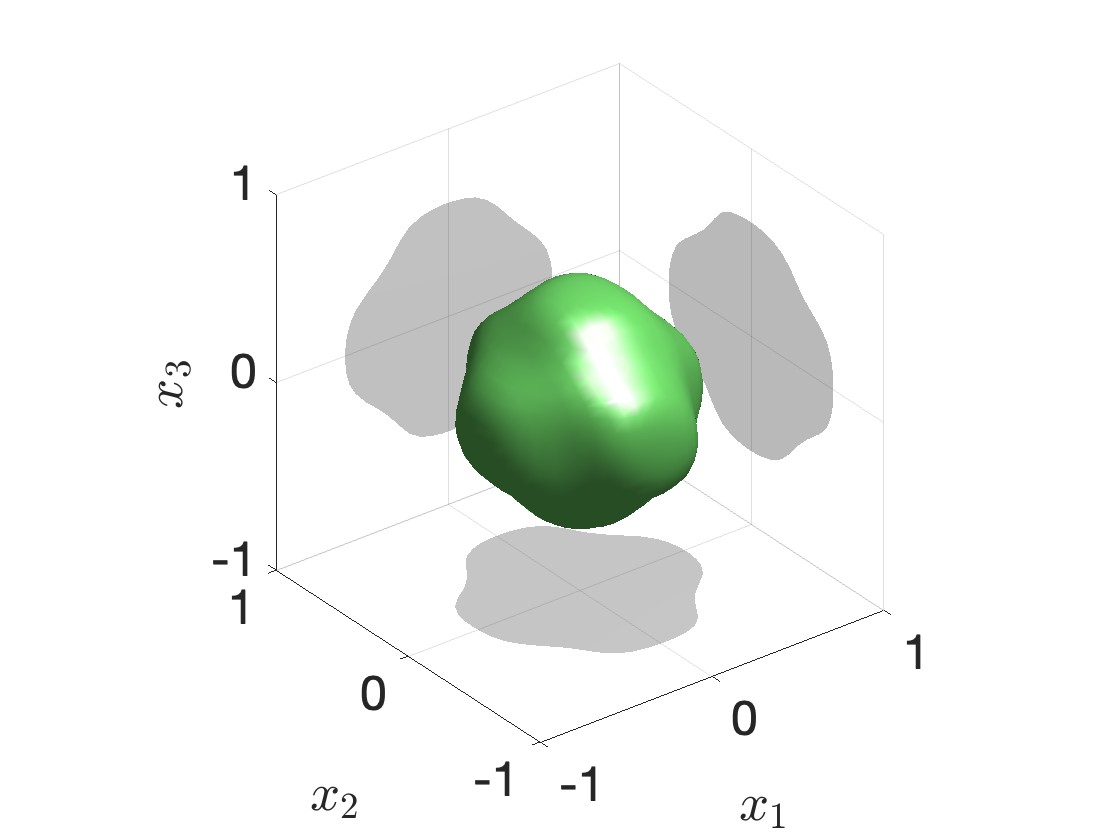}
            \caption{Iso-surface plot
            of (e)}
            \label{prediso:sphere}
        \end{subfigure}

         \end{center}

    \caption{Reconstruction results for the sphere-shaped source. 
Top row: exact Fourier coefficients \eqref{trueF:sphere} and the real \eqref{predFreal:sphere} and imaginary \eqref{predFimag:sphere} parts predicted by the proposed method. 
Bottom rows: cross-sectional views and corresponding iso-surface representations of the true source \eqref{truecross:sphere}, \eqref{trueiso:sphere}, and its reconstruction \eqref{predcross:sphere}, \eqref{prediso:sphere}.}
    \label{fig:new sphere-3d}
\end{figure}

\begin{figure}[H]
    \begin{center}
     \begin{subfigure}{0.3\textwidth}
            \includegraphics[scale = 0.14]{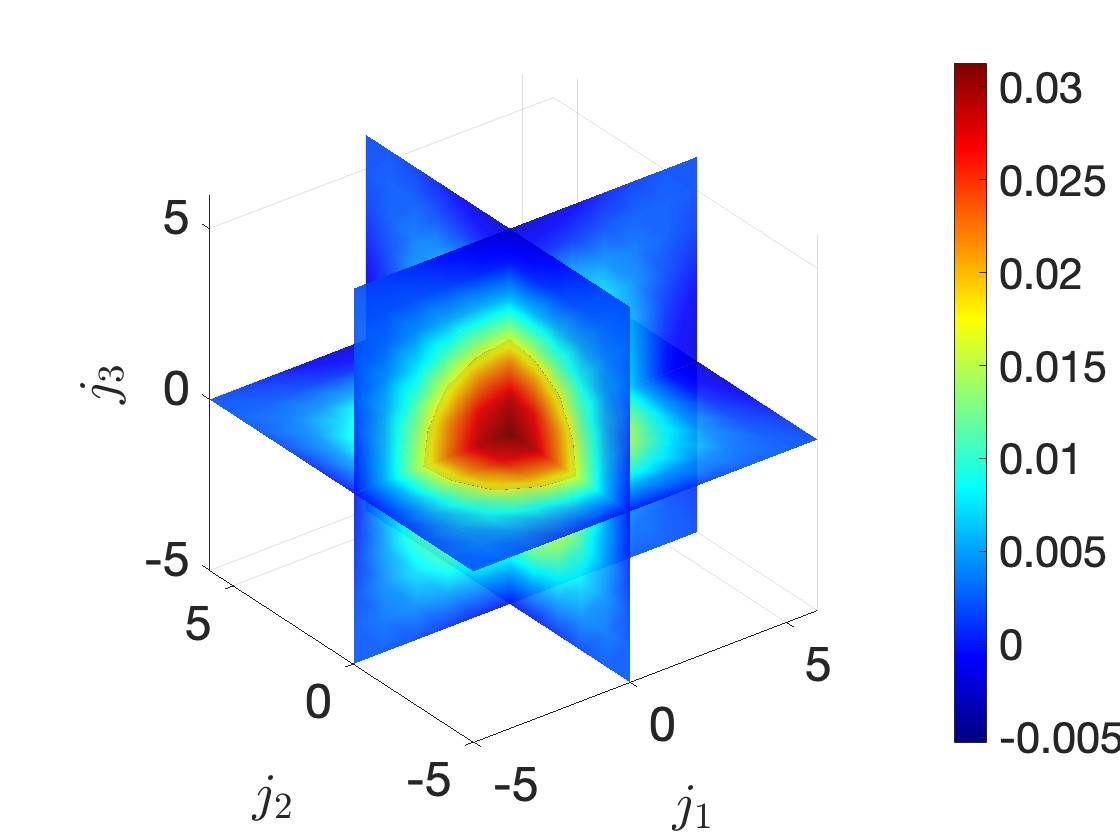}
             \caption{$\mathrm{Re}(\widehat{f}_{\text{per}})$ $(\mathrm{Im}(\widehat{f}_{\text{per}})=0)$}
            \label{trueF:pyramid}
        \end{subfigure}
        \hspace{0.02cm}
        \begin{subfigure}{0.3\textwidth}
            \includegraphics[scale = 0.14]{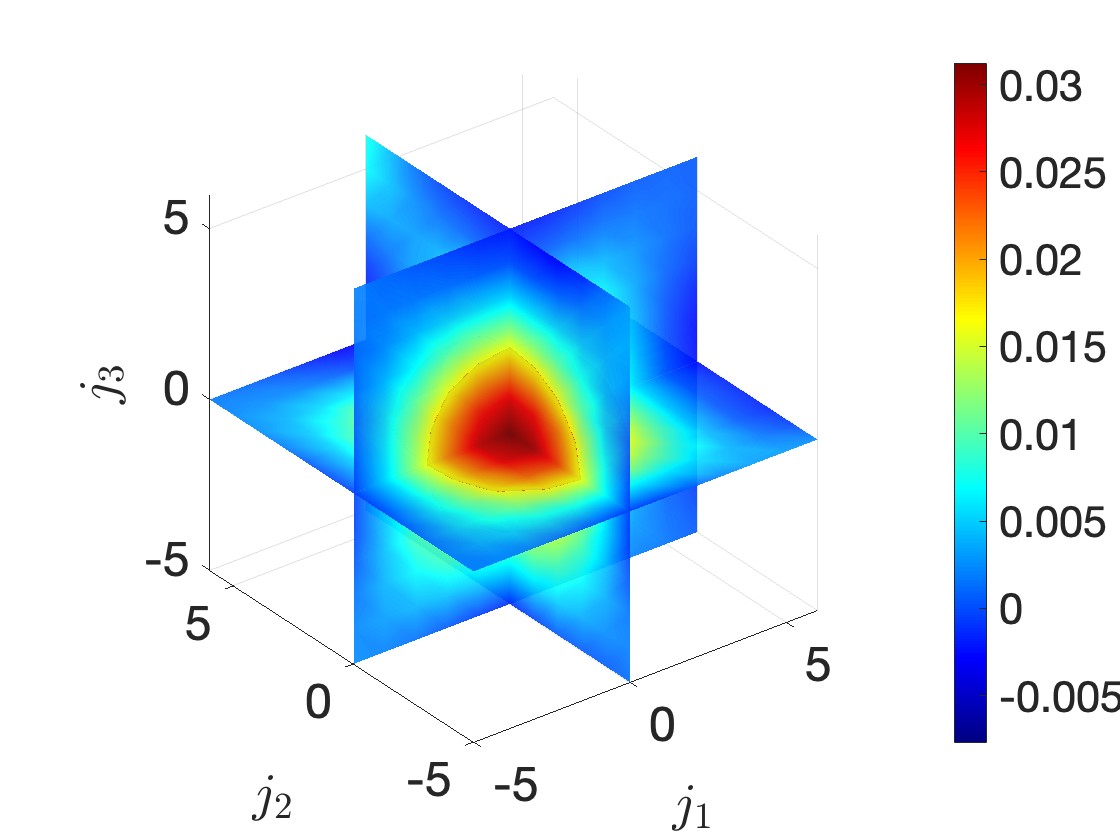}
            \caption{$\mathrm{Re}\left(\widehat{f}^{\ast}_{\Theta}\right)$}
            \label{predFreal:pyramid}
        \end{subfigure}
        \hspace{0.02cm}
        \begin{subfigure}{0.3\textwidth}
            \includegraphics[scale = 0.14]{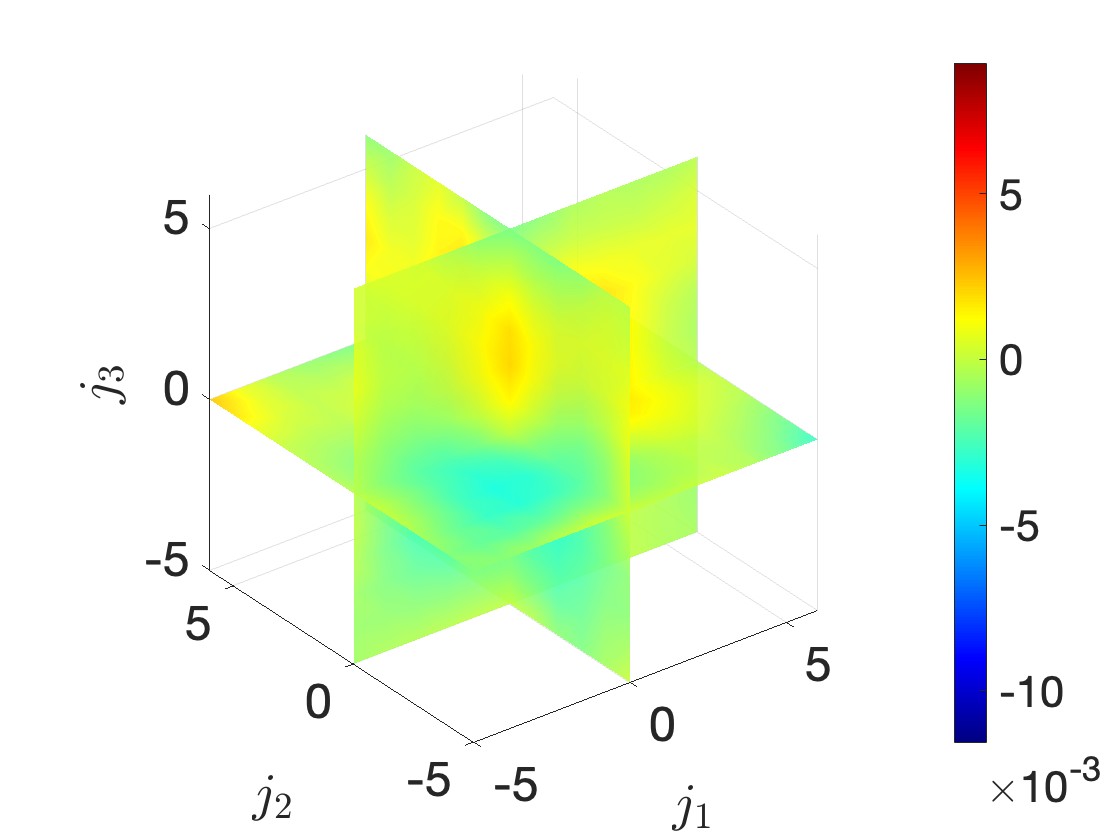}
            \caption{$\mathrm{Im}\left(\widehat{f}^{\ast}_{\Theta}\right)$}
            \label{predFimag:pyramid}
        \end{subfigure}
        
\begin{subfigure}{0.325\textwidth}
            \includegraphics[scale = 0.14]{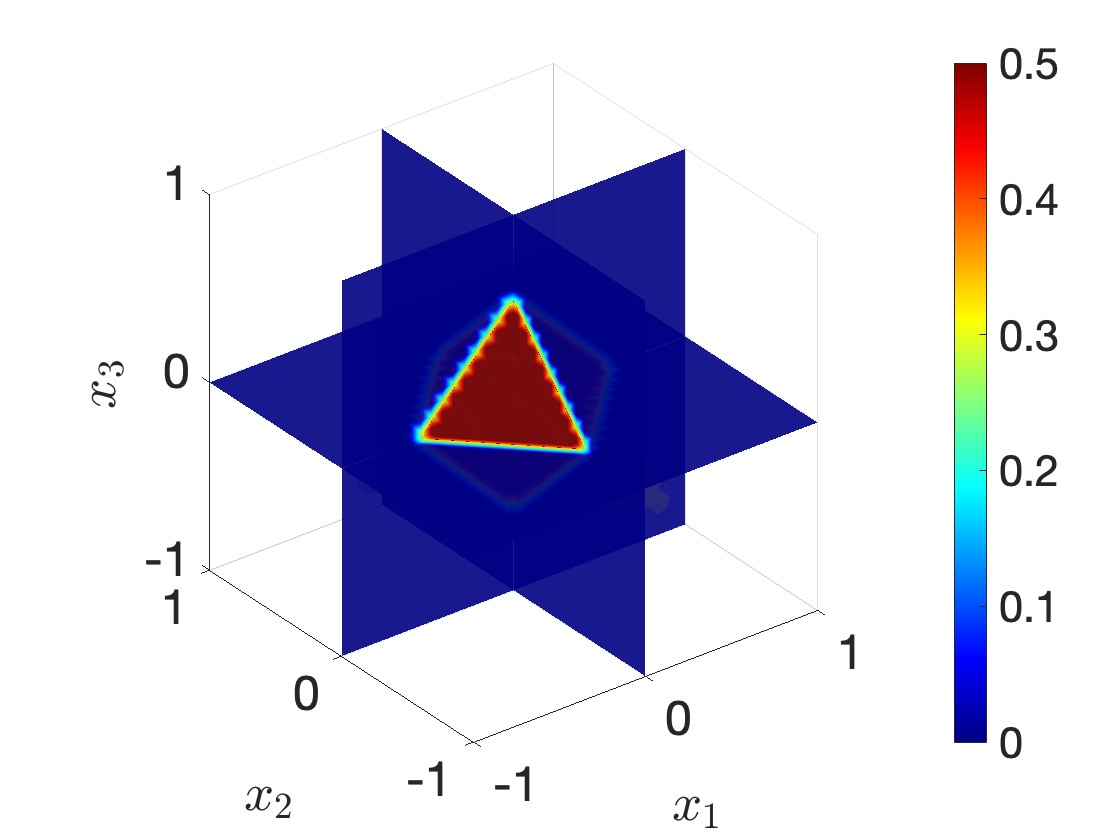}
            \caption{$f$}
            \label{truecross:pyramid}
        \end{subfigure}
        \begin{subfigure}{0.325\textwidth}
            \includegraphics[scale = 0.14]{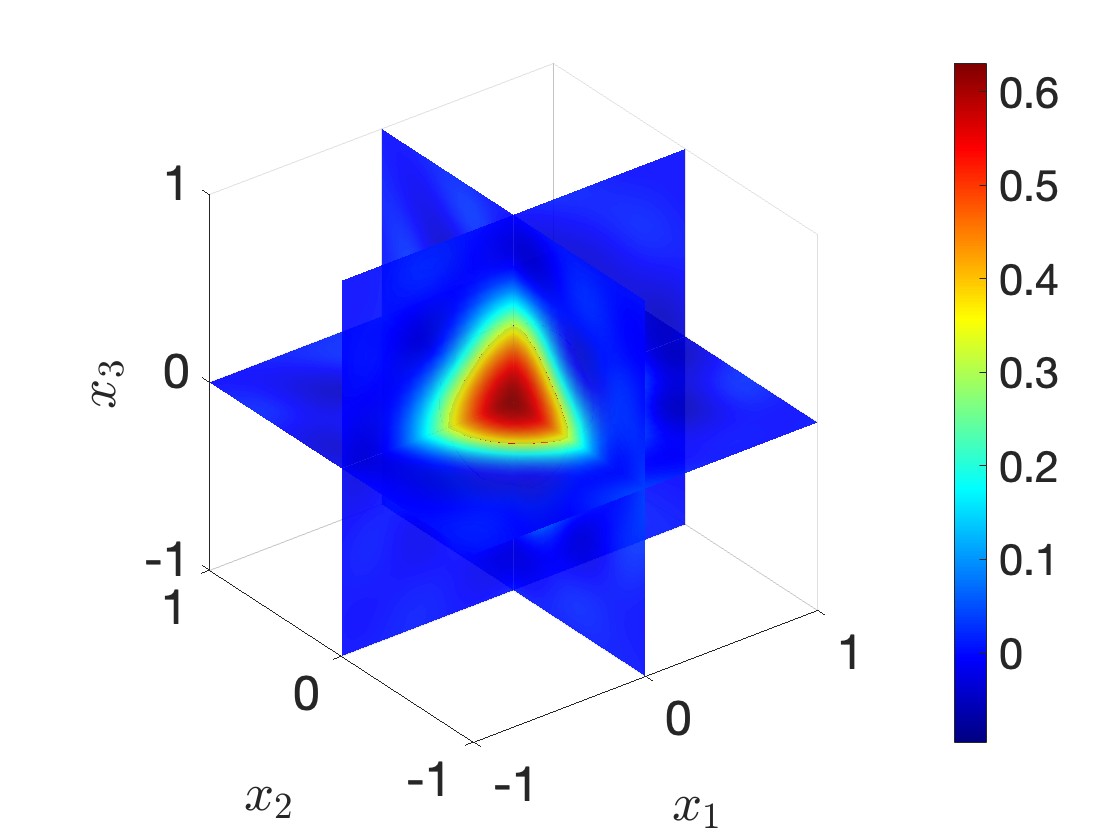}
            \caption{$f_{\text{comp}}$}
            \label{predcross:pyramid}
        \end{subfigure}

        \begin{subfigure}[b]{0.325\textwidth}
            \includegraphics[scale = 0.14]{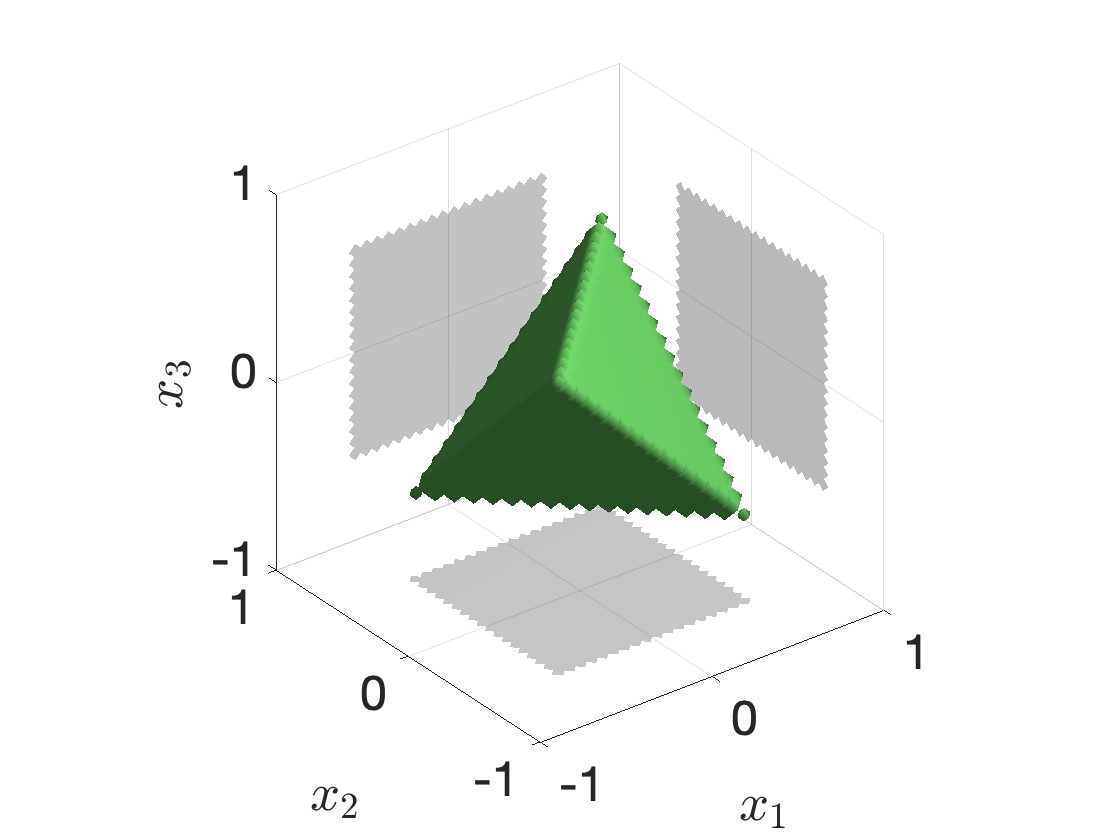}
            \caption{Iso-surface plot
            of (d)}
            \label{trueiso:pyramid}
        \end{subfigure}
        \begin{subfigure}{0.325\textwidth}
            \includegraphics[scale = 0.14]{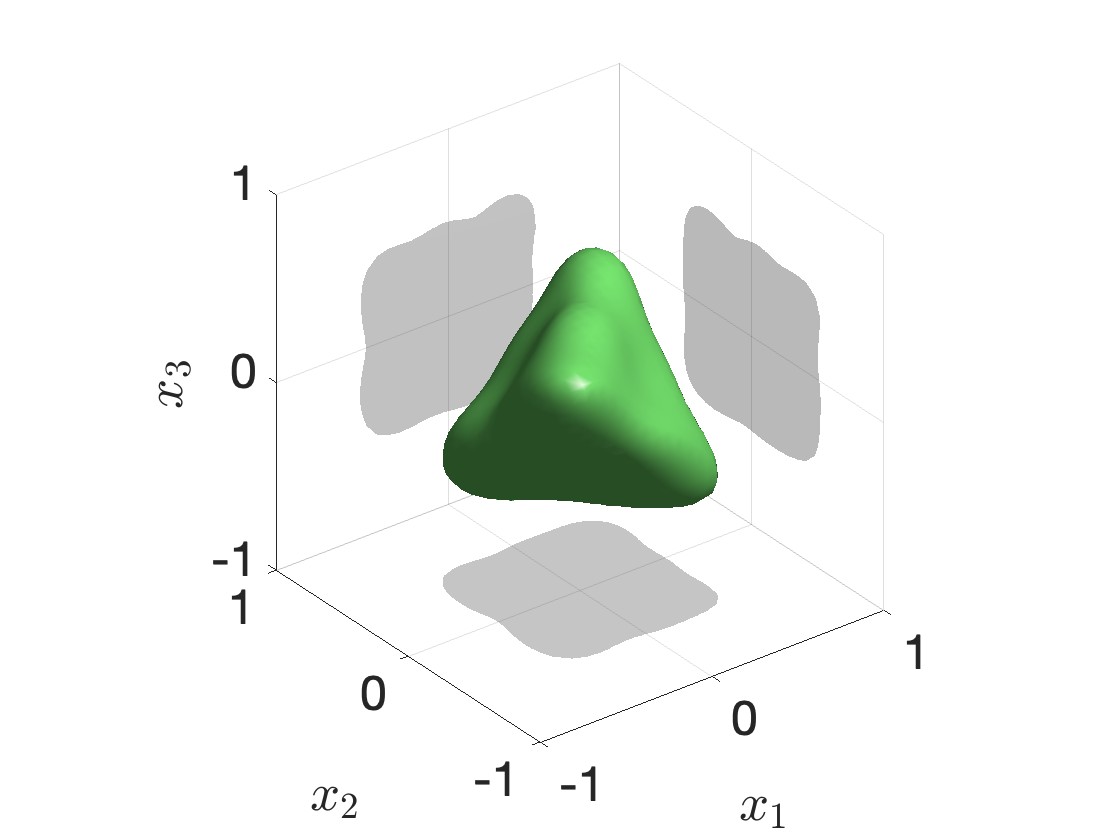}
            \caption{Iso-surface plot
            of (e)}
            \label{prediso:pyramid}
        \end{subfigure}

         \end{center}

    \caption{Reconstruction results for the pyramid-shaped source. 
Top row: exact Fourier coefficients \eqref{trueF:pyramid} and the real \eqref{predFreal:pyramid} and imaginary \eqref{predFimag:pyramid} parts predicted by the proposed method. 
Bottom rows: cross-sectional views and corresponding iso-surface representations of the true source \eqref{truecross:pyramid}, \eqref{trueiso:pyramid}, and its reconstruction \eqref{predcross:pyramid}, \eqref{prediso:pyramid}.}
    \label{fig:new pyramid-3d}
\end{figure}

\begin{table}[H]
    \centering
    \begin{tabular}{|c|c|c|c|c|}
    \hline
    Source &    $\mathcal{E}_{\text{Fourier}}\left[\widehat{f}_{\Theta}^{\ast}\right]$ & Computation time \\
    \hline
    Cube   & 29.14\%  & 29 min 59 sec \\
    \hline
    Sphere  & 24.17\%  & 27 min 18 sec \\
    \hline
    Pyramid   & 26.38\%  &  25 min 40 sec \\
    \hline
    \end{tabular}
    \caption{Relative error and computational time for $3$D reconstructions.}
    \label{table: new 3D table}
\end{table}

\section{Conclusion}\label{section6}

We have developed a novel unsupervised deep learning based algorithm to solve the two- and three-dimensional inverse source problems. The approach first employs an imaging function to obtain an initial approximation of the source geometry and location. Using this information, a Fourier-domain model equation is derived through a periodization technique and incorporated into the training of a model-informed neural network for parameter estimation. Furthermore, we established a theoretical lower bound for Fourier series truncation, enabling fast computation while maintaining high approximation accuracy. Numerical results demonstrate that the proposed method provides fast, accurate, and stable reconstructions, exhibits robustness to noise, and outperforms the conventional least-squares approach. Future work includes extending this framework to inverse acoustic and electromagnetic scattering problems. 
\vspace{0.3 cm}

\noindent  \textbf{Acknowledgment.} The research is partially supported by NSF Grant DMS-2243854.

\printbibliography[heading=bibintoc,title={References}]

\end{document}